\newtheorem{theorem}{Theorem}[section]
\numberwithin{equation}{section}
\newtheorem{proposition}[theorem]{Proposition}
\newtheorem{corollary}[theorem]{Corollary}
\newtheorem{remark}[theorem]{Remark}
\newtheorem{algorithm}[theorem]{Algorithm}
\titleformat{\section}{\normalfont\scshape\centering}{\thesection.}{0.5em}{}
\titleformat*{\subsection}{\itshape}
\titleformat*{\subsubsection}{\itshape}
\providecommand{\keywords}[1]
{
	{\small\textit{Keywords:} #1}
}
\providecommand{\MSC}[1]
{
	{\small\textit{AMS MSC (2020):~~} #1}
}
\providecommand{\jumptmp}[2]{#1\llbracket{#2}#1\rrbracket}
\providecommand{\jump}[1]{\jumptmp{}{#1}}
	\def\MR#1{}
\begin{document}
	\setlength{\abovedisplayskip}{5.5pt}
	\setlength{\belowdisplayskip}{5.5pt}
	\setlength{\abovedisplayshortskip}{5.5pt}
	\setlength{\belowdisplayshortskip}{5.5pt}

	\title{Explicit and efficient error estimation\\ for convex minimization problems}
	\author[1]{Sören Bartels\thanks{Email: \texttt{bartels@mathematik.uni-freiburg.de}}}
	\author[2]{Alex Kaltenbach\thanks{Email: \texttt{alex.kaltenbach@mathematik.uni-freiburg.de\vspace*{-4mm}}}}
	\date{\today}
	\affil[1]{\small{Institute of Applied Mathematics, Albert--Ludwigs--University Freiburg, Hermann--Herder--Stra\ss e~10, 79104 Freiburg}}
	\affil[2]{\small{Institute of Applied Mathematics, Albert--Ludwigs--University Freiburg, Ernst--Zermelo--Stra\ss e~1, 79104 Freiburg}}
	\maketitle

	\pagestyle{fancy}
	\fancyhf{}
	\fancyheadoffset{0cm}
	\addtolength{\headheight}{-0.25cm}
	\renewcommand{\headrulewidth}{0pt} 
	\renewcommand{\footrulewidth}{0pt}
	\fancyhead[CO]{\textsc{Explicit and efficient error estimation}}
	\fancyhead[CE]{\textsc{S. Bartels and A. Kaltenbach}}
	\fancyhead[R]{\thepage}
	\fancyfoot[R]{}
	
	\begin{abstract}
		We combine a  systematic approach
		for deriving general a posteriori error estimates for \hspace*{-0.15mm}convex \hspace*{-0.15mm}minimization \hspace*{-0.15mm}problems \hspace*{-0.15mm}based \hspace*{-0.15mm}on \hspace*{-0.15mm}convex \hspace*{-0.15mm}duality \hspace*{-0.15mm}relations \hspace*{-0.15mm}with~\hspace*{-0.15mm}a~\hspace*{-0.15mm}\mbox{recently}~\hspace*{-0.15mm}deri-ved \hspace*{-0.15mm}generalized \hspace*{-0.15mm}Marini \hspace*{-0.15mm}formula. \!The \hspace*{-0.15mm}a \hspace*{-0.15mm}posteriori \hspace*{-0.15mm}error \hspace*{-0.15mm}estimates~\hspace*{-0.15mm}are~\hspace*{-0.15mm}essentially~\hspace*{-0.15mm}constant-free \hspace*{-0.15mm}and \hspace*{-0.15mm}apply \hspace*{-0.15mm}to \hspace*{-0.15mm}a \hspace*{-0.15mm}large \hspace*{-0.15mm}class \hspace*{-0.15mm}of \hspace*{-0.15mm}variational \hspace*{-0.15mm}problems \hspace*{-0.15mm}including~\hspace*{-0.15mm}the~\hspace*{-0.15mm}\mbox{$p$--\hspace*{-0.15mm}Dirichlet}~\hspace*{-0.15mm}\mbox{problem},  as \hspace*{-0.15mm}well \hspace*{-0.15mm}as \hspace*{-0.15mm}degenerate \hspace*{-0.15mm}minimization, \hspace*{-0.15mm}obstacle \hspace*{-0.15mm}and \hspace*{-0.15mm}image \hspace*{-0.15mm}de-noising~\hspace*{-0.15mm}problems.~\hspace*{-0.15mm}In~\hspace*{-0.15mm}addition,  these a  posteriori error  estimates~are based on a comparison to a given non-conforming finite \hspace*{-0.15mm}element \hspace*{-0.15mm}solution. \hspace*{-0.15mm}For \hspace*{-0.15mm}the \hspace*{-0.15mm}$p$--\hspace*{-0.15mm}Dirichlet \hspace*{-0.15mm}problem,  \hspace*{-0.15mm}these  \hspace*{-0.15mm}a \hspace*{-0.15mm}posteriori~\hspace*{-0.15mm}error~\hspace*{-0.15mm}bounds~\hspace*{-0.15mm}are  equivalent to residual type a posteriori error bounds and, hence,~reliable and efficient.~~~~
	\end{abstract}

	\keywords{Convex minimization, finite elements, non-conforming methods, a posteriori error estimates, adaptive mesh refinement, $p$--Dirichlet problem, optimal design problem
	}
	
	\MSC{49M29, 65K15, 65N15, 65N50}

	\section{Introduction}\label{sec:intro}
	
	\subsection{Available estimates} \thispagestyle{empty}
	\qquad Various computable a posteriori error estimates have recently been derived for convex minimization \hspace*{-0.15mm}problems \hspace*{-0.15mm}such \hspace*{-0.15mm}as \hspace*{-0.15mm}the \hspace*{-0.15mm}$p$\hspace*{-0.15mm}--\hspace*{-0.3mm}Dirichlet \hspace*{-0.15mm}problem \hspace*{-0.15mm}or \hspace*{-0.15mm}degenerate~\hspace*{-0.15mm}\mbox{minimization}~\hspace*{-0.15mm}\mbox{problems}, cf. \cite{DK08,BDK12,LLC18,LC20,BM20}. These  a posteriori error estimates are typically defined~for~a particular finite \hspace*{-0.1mm}element \hspace*{-0.1mm}method~\hspace*{-0.1mm}and~\hspace*{-0.1mm}an~\hspace*{-0.1mm}appropriate~\hspace*{-0.1mm}discretization \hspace*{-0.1mm}of \hspace*{-0.1mm}the \hspace*{-0.1mm}problem~\hspace*{-0.1mm}\mbox{including}~\hspace*{-0.1mm}a~\hspace*{-0.1mm}suitable choice of quadrature. Further error sources, e.g., resulting from a stopping criterion of an iterative solution procedure are often not considered.~In~this~article,~we~\mbox{systematically}~\mbox{derive} a general a posteriori error estimate which applies to a large class of conforming~and~non-conforming numerical methods for non-linear and non-differentiable~problems, and provides a \hspace*{-0.1mm}computable \hspace*{-0.1mm}error \hspace*{-0.1mm}bound \hspace*{-0.1mm}that \hspace*{-0.1mm}is \hspace*{-0.1mm}independent \hspace*{-0.1mm}of \hspace*{-0.1mm}particular \hspace*{-0.1mm}discretizations~\hspace*{-0.1mm}or~\hspace*{-0.1mm}iteration~\hspace*{-0.1mm}\mbox{errors}. One important application of general estimates is the concept~of~model~hierarchies, e.g., determining the error of the numerical solution of a linearized model in solving a more complex non-linear problem. Our estimates avoid the use of discrete and continuous~Euler--Lagrange equations and only resort to first-order relations.~This~\mbox{approach}~combines~a~concept~used~in \cite{Rep08} with representations of a discrete dual solution obtained~via~post-processing~non-conforming approximations derived in, e.g., \cite{Mar85,Bar21}.
	
	\subsection{Error bounds via convex duality}
	
	\qquad Given a proper, convex and lower semi-continuous functional $\phi:\mathbb{R}^d\to \mathbb{R}\cup\{+\infty\}$~and~a (Lebesgue--)measurable functional $\psi : \Omega\times\mathbb{R}\to \mathbb{R}\cup\{+\infty\}$ such~that~for~almost~every~${x\in \Omega}$, where $\Omega\subseteq \mathbb{R}^d$, $d\in \mathbb{N}$, is a bounded polyhedral Lipschitz domain, the functional~$\psi(x,\cdot) : \mathbb{R}\to \mathbb{R}\cup\{+\infty\}$ is proper, convex
	and lower \mbox{semi-continuous},  we consider the minimization of the functional ${I:W^{1,p}_D(\Omega)\to \mathbb{R}\cup\{+\infty\}}$,~${p\in (1,\infty)}$,~for~every~$\smash{v\in W^{1,p}_D(\Omega)}$~defined~by 
	\begin{align}
		I(v)\vcentcolon=\int_{\Omega}{\phi(\nabla v)\,\textup{d}x}+\int_{\Omega}{\psi(\cdot, v)\,\textup{d}x},\label{intro:primal}
	\end{align}
	where $W^{1,p}_D(\Omega)$ denotes a suitable Sobolev space with a homogeneous Dirichlet boundary condition on a non-empty boundary part $\Gamma_D\subseteq \partial\Omega$ and may be replaced~by~$BV(\Omega)$,~i.e., the space \hspace*{-0.1mm}of \hspace*{-0.1mm}functions \hspace*{-0.1mm}of \hspace*{-0.1mm}bounded \hspace*{-0.1mm}variation. \hspace*{-0.1mm}Given \hspace*{-0.1mm}a
	 \hspace*{-0.1mm}possibly \hspace*{-0.1mm}non-unique~\hspace*{-0.1mm}minimizer~\hspace*{-0.1mm}${u\in W^{1,p}_D(\Omega)}$ and an arbitrary conforming approximation $
	\tilde{u}_h\! \in\!  \smash{W^{1,p}_D(\Omega)}$, which may~result~from~a~\mbox{post-pro-} cessing of a non-conforming, 
 discontinuous approximation, the convexity~\mbox{properties}~of~\eqref{intro:primal}, measured by an error functional $\rho_I^2:\smash{W^{1,p}_D(\Omega)}\times \smash{W^{1,p}_D(\Omega)}\to \mathbb{R}_{\ge 0}$,~lead~to the error estimate
	 \begin{align}
	 	\rho_I^2(\tilde{u}_h,u)\leq I(\tilde{u}_h)-I(u).\label{intro:strong_convexity}
	 \end{align}
 	Here, $\rho_I^2: \smash{W^{1,p}_D(\Omega)}\times \smash{W^{1,p}_D(\Omega)}\!\to\!  \mathbb{R}_{\ge 0}$ has the interpretation of a distance~measure~and~is,~e.g.,  (if existent) a lower bound for the second variation of the functional ${I\!:\!W^{1,p}_D(\Omega)\!\to\!  \mathbb{R}\hspace*{-0.1em}\cup\hspace*{-0.1em}\{+\infty\}}$. Also for degenerate problems a meaningful distance measure can be defined making use of a \hspace*{-0.1mm}co-coercivity \hspace*{-0.1mm}property. \hspace*{-0.1mm}To \hspace*{-0.1mm}get \hspace*{-0.1mm}a \hspace*{-0.1mm}computable \hspace*{-0.1mm}upper \hspace*{-0.1mm}bound \hspace*{-0.1mm}for \hspace*{-0.1mm}the \hspace*{-0.1mm}\mbox{approximation}~\hspace*{-0.1mm}\mbox{error}~\hspace*{-0.1mm}\eqref{intro:strong_convexity}, we \hspace*{-0.1mm}resort \hspace*{-0.1mm}to \hspace*{-0.1mm}the \hspace*{-0.1mm}(Fenchel) \hspace*{-0.1mm}dual \hspace*{-0.1mm}problem \hspace*{-0.1mm}to \hspace*{-0.1mm}the \hspace*{-0.1mm}minimization \hspace*{-0.1mm}of \hspace*{-0.1mm}\eqref{intro:primal}~\hspace*{-0.1mm}which,~\hspace*{-0.1mm}if,~\hspace*{-0.1mm}e.g.,~\hspace*{-0.1mm}${\phi\!\in\! C^0(\mathbb{R}^d)}$ and $\psi:\Omega\times \mathbb{R}\to \mathbb{R}$ is a Carath\'eodory mapping\footnote{A mapping $f:\Omega\times \mathbb{R}^k\to \mathbb{R}^l$, $k,l\in \mathbb{N}$, is said to be a \textit{Carath\'eodory mapping}, if $\psi(x,\cdot)\in C^0(\mathbb{R}^k;\mathbb{R}^l)$ for almost every $x\in \Omega$ and $\psi(\cdot,a):\Omega\subseteq\mathbb{R}^d\to \mathbb{R}^l$ is (Lebesgue--)measurable for all $a\in \mathbb{R}^k$.}, cf. \cite[p. 113 ff.]{ET99}, is given by~the~maximiza-tion of the functional $D:\smash{W^{p'}_N(\textup{div};\Omega)}\to \mathbb{R}\cup\{-\infty\}$, for every $y\in \smash{W^{p'}_N(\textup{div};\Omega)}$~defined~by 
 	\begin{align}
	 	D(y)\vcentcolon=-\int_{\Omega}{\phi^*(y)\,\textup{d}x}-\int_{\Omega}{\psi^*(\cdot,\textup{div}(y))\,\textup{d}x},\label{intro:dual}
 	\end{align}
 	where $W^{p'}_N(\textup{div};\Omega)$  consists of all vector fields in  $L^{p'}(\Omega;\mathbb{R}^d)$ whose distributional divergence exists \hspace*{-0.1mm}in \hspace*{-0.1mm}$\smash{L^{p'}\!(\Omega)}$ \hspace*{-0.1mm}and \hspace*{-0.1mm}whose \hspace*{-0.1mm}normal \hspace*{-0.1mm}component~\hspace*{-0.1mm}vanishes~\hspace*{-0.1mm}on~\hspace*{-0.1mm}${\Gamma_{\! N}\!\vcentcolon=\!\partial\Omega\!\setminus\!\Gamma_{\! D}}$.~\hspace*{-0.1mm}Apart~\hspace*{-0.1mm}from~\hspace*{-0.1mm}that,~\hspace*{-0.1mm}the functionals \hspace*{-0.1mm}$\phi^*:\mathbb{R}^d\!\to\! \mathbb{R}\cup\{+\infty\}$ \hspace*{-0.1mm}and \hspace*{-0.1mm}$\psi^*:\Omega\times \mathbb{R}\!\to\! \mathbb{R}\cup\{+\infty\}$~\hspace*{-0.1mm}\mbox{denote}~\hspace*{-0.1mm}the~\hspace*{-0.1mm}Fenchel~\hspace*{-0.1mm}\mbox{conjugates}~\hspace*{-0.1mm}to \hspace*{-0.1mm}$\phi:\mathbb{R}^d\!\to \!\mathbb{R}\cup\{+\infty\}$ and $\psi\!:\!\Omega\times \mathbb{R}\!\to\! \mathbb{R}\cup\{+\infty\}$ \hspace*{-0.1mm}(with \hspace*{-0.1mm}respect \hspace*{-0.1mm}to \hspace*{-0.1mm}the~\hspace*{-0.1mm}second~\hspace*{-0.1mm}argument), \hspace*{-0.1mm}resp. \hspace*{-0.1mm}A weak duality relation implies~${I(u)\ge D(\tilde{z}_h)}$ for all $\tilde{z}_h\in \smash{W^{p'}_N(\textup{div};\Omega)}$,~cf.~\mbox{\cite[Proposition~1.1]{ET99}}.
 	In~\hspace*{-0.15mm}particular, \hspace*{-0.15mm}for \hspace*{-0.15mm}every \hspace*{-0.15mm}conforming \hspace*{-0.15mm}approximations \hspace*{-0.15mm}$
 	\tilde{u}_h\! \in\!  W^{1,p}_D(\Omega)$~\hspace*{-0.15mm}and~\hspace*{-0.15mm}$\smash{\tilde{z}_h\!\in\! W^{p'}_N(\textup{div};\Omega)}$,~which both may result from~a post-processing of a non-conforming, 
 	discontinuous approximation, we obtain from an integration-by-parts, the general primal-dual a posteriori error estimate 
 	\begin{align}
 		\begin{aligned}
 		\rho_I^2(\tilde{u}_h,u)
 		&\leq \int_{\Omega}{\phi(\nabla \tilde{u}_h)-\nabla \tilde{u}_h\cdot \tilde{z}_h+\phi^*(\tilde{z}_h)\,\textup{d}x}\\&\quad +
 		\int_{\Omega}{\psi(\cdot, \tilde{u}_h)-\tilde{u}_h\,\textup{div}(\tilde{z}_h)+\psi^*(\cdot,\textup{div}(\tilde{z}_h))\,\textup{d}x}\\&=\vcentcolon\eta_h^2(\tilde{u}_h,\tilde{z}_h).
 	\end{aligned}\label{intro:general_estimate}
 	\end{align}
 	The bound \eqref{intro:general_estimate} and variants of it are well-known in the literature, cf. \cite{RX97,Rep99,Rep20C,Han05,Rep08,Bar15,BM20,Bar21}. The practical realizations of these bounds require the construction~of~an~appropriate -- ideally optimal -- $ \tilde{z}_h\in \smash{W^{p'}_N(\textup{div};\Omega)}$. Apparently, the estimate \eqref{intro:general_estimate} can only be efficient,~i.e., provide an optimal upper bound, if a strong duality relation applies, i.e., if we have that
 	\begin{align*}
 		\inf_{v\in W^{1,p}_D(\Omega)}{I(v)}=\sup_{y\in W^{p'}_N(\textup{div};\Omega)}{D(y)}.
 	\end{align*}
 
 	\subsection{Practical realization}\vspace*{-0.5mm}
 	
 	\qquad We \hspace*{-0.1mm}assume \hspace*{-0.1mm}here \hspace*{-0.1mm}that \hspace*{-0.1mm}for \hspace*{-0.1mm}all \hspace*{-0.1mm}$t\in \mathbb{R}$, $\psi(\cdot, t) = \psi_h(\cdot,t)$ \hspace*{-0.1mm}is  \hspace*{-0.1mm}element-wise \hspace*{-0.1mm}constant~\hspace*{-0.1mm}with~\hspace*{-0.1mm}\mbox{respect} to a triangulation $\mathcal{T}_h$, $h>0$, of $\Omega$ and choose an admissible vector~field~${\tilde{z}_h \in \smash{W^{p'}_N(\textup{div};\Omega)}}$ in the Raviart--Thomas finite element space $\mathcal{R}T^0_N(\mathcal{T}_h)\subseteq  \smash{W^{p'}_N(\textup{div};\Omega)}$,~cf.~\cite{RT75},~i.e., the space of element-wise affine vector fields that have continuous constant normal components~on element sides which vanish on $\Gamma_N$. Assume~that  $\tilde{u}_h\in W^{1,p}_D(\Omega)$~belongs~to~${\mathcal{S}^1_D(\mathcal{T}_h)\subseteq  W^{1,p}_D(\Omega)}$, i.e., the space of globally  continuous, element-wise affine functions that vanish on $\Gamma_D$.~Then, the primal-dual a posteriori error estimator defined~in~\eqref{intro:general_estimate}~can~be~re-written~as
 	\begin{align}
 		\begin{aligned}
 		\eta_h^2(\tilde{u}_h,\tilde{z}_h)&=
 		 \int_{\Omega}{\phi(\nabla \tilde{u}_h)-\nabla \tilde{u}_h\cdot \Pi_h\tilde{z}_h+\phi^*(\Pi_h\tilde{z}_h)\,\textup{d}x}\\[-0.5mm]&\quad +
 		\int_{\Omega}{\psi_h(\cdot,\Pi_h \tilde{u}_h)-\Pi_h\tilde{u}_h\,\textup{div}(\tilde{z}_h)+\psi^*_h(\cdot,\textup{div}(\tilde{z}_h))\,\textup{d}x}\\[-0.5mm]&\quad
 		+\int_{\Omega}{\psi_h(\cdot,\tilde{u}_h)-\psi_h(\cdot,\Pi_h \tilde{u}_h)\,\textup{d}x}+	\int_{\Omega}{\phi^*(\tilde{z}_h)-\phi^*(\Pi_h\tilde{z}_h)\,\textup{d}x},
 	\end{aligned}\label{intro:a_posteriori_2}
 	\end{align}
  	where $\Pi_h\!:\!L^1(\Omega;\mathbb{R}^l)\!\to\! \mathcal{L}^0(\mathcal{T}_h)^l$, $l\!\in\! \mathbb{N}$, denotes the $L^2$--projection~\mbox{operator}~onto~\mbox{element-wise} constant functions and vector fields, resp. Then, the integrands of the first two integrals on the right-hand side in \eqref{intro:a_posteriori_2} are element-wise constant and, by the \mbox{Fenchel--Young}~\mbox{inequality},
  	non-negative. In addition,
  	by Jensen's inequality, it holds ${\int_T{\psi_h(\cdot,\tilde{u}_h)-\psi_h(\cdot,\Pi_h \tilde{u}_h)\,\textup{d}x}\ge 0}$ and 	${\int_T{\phi^*(\tilde{z}_h)-\phi^*(\Pi_h\tilde{z}_h)\,\textup{d}x}\ge 0}$~for~all~${T\in \mathcal{T}_h}$. More generally, it can be bounded reliably using a trapezoidal quadrature rule leading to a fully practical contribution.\vspace*{-0.5mm}
  	
  	\subsection{Non-conforming representation}\vspace*{-0.5mm}
  	
  	\qquad A quasi-optimal discrete vector field $\tilde{z}_h\in \mathcal{R}T^0_N(\mathcal{T}_h)$ now is found via post-processing~a non-conforming, discontinuous Crouzeix--Raviart 
  	approximation of the primal~problem,~i.e., the minimization of $I_h^{\textit{cr}}:\mathcal{S}^{1,\textit{cr}}_D(\mathcal{T}_h)\to \mathbb{R}\cup\{+\infty\}$, for every ${v_h\in \mathcal{S}^{1,\textit{cr}}_D(\mathcal{T}_h)}$ defined by 
  	\begin{align}
  		I_h^{\textit{cr}}(v_h)\vcentcolon=\int_{\Omega}{\phi(\nabla_hv_h)\,\textup{d}x}+\int_{\Omega}{\psi_h(\cdot,\Pi_hv_h)\,\textup{d}x}.\label{intro:discrete_primal}
  	\end{align}
  	where $\mathcal{S}^{1,\textit{cr}}_D(\mathcal{T}_h)$ denotes the Crouzeix--Raviart finite element space, i.e,~the~space~of~element-wise affine functions that are continuous at the midpoints of element~sides~and~vanish~in~midpoints \hspace*{-0.1mm}(barycenters) \hspace*{-0.1mm}of \hspace*{-0.1mm}element \hspace*{-0.1mm}sides \hspace*{-0.1mm}belonging \hspace*{-0.1mm}to \hspace*{-0.1mm}$\Gamma_{\! D}$, \hspace*{-0.1mm}and \hspace*{-0.1mm}where \hspace*{-0.1mm}${\nabla_h\!:\!\mathcal{S}^{1,\textit{cr}}_D(\mathcal{T}_h)\!\to\! \mathcal{L}^0(\mathcal{T}_h)^d}$~\hspace*{-0.1mm}de-notes \hspace*{-0.1mm}the \hspace*{-0.1mm}element-wise \hspace*{-0.1mm}application \hspace*{-0.1mm}of \hspace*{-0.1mm}the \hspace*{-0.1mm}gradient \hspace*{-0.1mm}operator. \hspace*{-0.1mm}In~\hspace*{-0.1mm}\cite{CP20,Bar21}, \hspace*{-0.1mm}it \hspace*{-0.1mm}has \hspace*{-0.1mm}been~\hspace*{-0.1mm}shown~\hspace*{-0.1mm}that a \hspace*{-0.1mm}discrete \hspace*{-0.1mm}(Fenchel) \hspace*{-0.1mm}dual \hspace*{-0.1mm}problem \hspace*{-0.1mm}is \hspace*{-0.1mm}given \hspace*{-0.1mm}via \hspace*{-0.1mm}the \hspace*{-0.1mm}maximization \hspace*{-0.1mm}of 
  	\hspace*{-0.1mm}${D_h^{\textit{rt}}\!:\!\mathcal{R}T^0_N(\mathcal{T}_h)\!\to\!\mathbb{R}\hspace*{-0.1em}\cup\hspace*{-0.1em}\{\hspace*{-0.1em}-\infty\hspace*{-0.1em}\}}$, for~every~$\smash{y_h\in \mathcal{R}T^0_N(\mathcal{T}_h)}$ defined by
  	\begin{align}
  		D_h^{\textit{rt}}(y_h)\vcentcolon=-\int_{\Omega}{\phi^*(\Pi_hy_h)\,\textup{d}x}-\int_{\Omega}{\psi^*_h(\cdot,\textup{div}(y_h))\,\textup{d}x}.\label{intro:discrete_dual}
  	\end{align}
  	Then, \hspace*{-0.15mm}a \hspace*{-0.15mm}maximizer \hspace*{-0.15mm}$\smash{z_h^{\textit{rt}}\hspace*{-0.22em}\in\hspace*{-0.22em} \mathcal{R}T^0_N(\mathcal{T}_h)}$ \hspace*{-0.15mm}of \hspace*{-0.15mm}\eqref{intro:discrete_dual} \hspace*{-0.15mm}represents \hspace*{-0.15mm}a \hspace*{-0.15mm}quasi-optimal~\hspace*{-0.15mm}choice~\hspace*{-0.15mm}for~\hspace*{-0.15mm}$\smash{\tilde{z}_h\hspace*{-0.22em}\in\hspace*{-0.22em} \mathcal{R}T^0_N(\mathcal{T}_h)}$ in the a posteriori error estimate \eqref{intro:a_posteriori_2}.
  	However, owing to typical constraints such as, e.g., $-\textup{div}(z_h^{\textit{rt}}) = f_h$ in $\mathcal{L}^0(\mathcal{T}_h)$, 
  	this often requires solving a (potentially non-linear) saddle point problem. 
  	The latter
  	can, fortunately,  be avoided via post-processing~the~minimizer~of~\eqref{intro:discrete_primal}. More precisely,~if~${\phi\hspace*{-0.1em}\in\hspace*{-0.1em} C^1(\mathbb{R}^d)}$ and ${\psi_h(x,\cdot)\hspace*{-0.1em}\in\hspace*{-0.1em} C^1(\mathbb{R})}$ for almost every $x\hspace*{-0.1em}\in\hspace*{-0.1em} \Omega$, then~for~a~minimi-zer $u_h^{\textit{cr}}\hspace*{-0.1em}\in\hspace*{-0.1em}\mathcal{S}^{1,\textit{cr}}(\mathcal{T}_h)$   of \eqref{intro:discrete_primal}, we may represent 
  	$z_h^{\textit{rt}}\hspace*{-0.1em}\in\hspace*{-0.1em} \mathcal{R}T^0_N(\mathcal{T}_h)$~via~the~\mbox{reconstruction}~formula
  	\begin{align}
  		z_h^{\textit{rt}}=D\phi(\nabla_h u_h^{\textit{cr}})+D\psi_h(\cdot,\Pi_hu_h^{\textit{cr}})d^{-1}\big(\textup{id}_{\mathbb{R}^d}-\Pi_h\textup{id}_{\mathbb{R}^d}\big)\quad\textup{ in }\mathcal{R}T^0_N(\mathcal{T}_h),\label{intro:reconstruction_formula}
  	\end{align}
   derived,	e.g., in \cite[Proposition 3.1]{Bar21}. Even if $\phi:\smash{\mathbb{R}^d}\to \mathbb{R}\cup\{+\infty\}$ and $\psi_h:\Omega\times \mathbb{R}\!\to\! \mathbb{R}\cup\{+\infty\}$ are non-differentiable, it is sometimes  possible to derive  reconstruction formulas~similar~to \eqref{intro:reconstruction_formula} for quasi-optimal discrete vector fields $z_h^{\textit{rt}}\in \mathcal{R}T^0_N(\mathcal{T}_h)$, e.g., resorting to regularization arguments or given discrete Lagrange multipliers.\newpage
  
   \hspace*{-5.5mm}For the non-linear Dirichlet problem, i.e., if $\phi\in C^1(\mathbb{R}^d)$ and $\psi_h(x,t)\vcentcolon=-f_h(x)t$ for almost every $x\hspace*{-0.1em}\in\hspace*{-0.1em} \Omega$ and all $t\hspace*{-0.1em}\in\hspace*{-0.1em} \mathbb{R}$, where $f_h\hspace*{-0.1em}\vcentcolon=\hspace*{-0.1em}\Pi_h f\hspace*{-0.1em}\in\hspace*{-0.1em} \mathcal{L}^0(\mathcal{T}_h)$ and $f\hspace*{-0.1em}\in\hspace*{-0.1em} L^{p'}(\Omega)$, $p\hspace*{-0.1em}\in\hspace*{-0.1em} [1,\infty]$,~\eqref{intro:a_posteriori_2}~implies
   \begin{align}
   	\begin{aligned}
   			\rho_I^2(\tilde{u}_h,u)&\leq \eta_h^2(\tilde{u}_h,z_h^{\textit{rt}})\\&\leq \int_{\Omega}{\big(D\phi(\nabla \tilde{u}_h)-D\phi(\nabla_hu_h^{\textit{cr}})\big)\cdot(\nabla \tilde{u}_h-\nabla_hu_h^{\textit{cr}})\,\textup{d}x}\\&\quad+ \int_{\Omega}{
   				\big(D\phi^*(z_h^{\textit{rt}})-\phi^*(\Pi_hz_h^{\textit{rt}})\big)\cdot(z_h^{\textit{rt}}-\Pi_hz_h^{\textit{rt}})\,\textup{d}x},
   		\end{aligned}\label{intro:a_posteriori_3}
   \end{align}
	where \hspace*{-0.15mm}the \hspace*{-0.15mm}first \hspace*{-0.15mm}integral \hspace*{-0.15mm}on \hspace*{-0.15mm}the \hspace*{-0.15mm}right-hand \hspace*{-0.15mm}side \hspace*{-0.15mm}has \hspace*{-0.15mm}the \hspace*{-0.15mm}interpretation \hspace*{-0.15mm}of \hspace*{-0.15mm}a \hspace*{-0.15mm}residual,~\hspace*{-0.15mm}while~\hspace*{-0.15mm}the~\hspace*{-0.15mm}second \hspace*{-0.1mm}integral \hspace*{-0.1mm}contains  \hspace*{-0.1mm}data \hspace*{-0.1mm}approximation \hspace*{-0.1mm}errors.
	\hspace*{-0.5mm}The \hspace*{-0.1mm}estimate \hspace*{-0.1mm}\eqref{intro:a_posteriori_3} \hspace*{-0.1mm}is \hspace*{-0.1mm}less~\hspace*{-0.1mm}\mbox{accurate}~\hspace*{-0.1mm}than~\hspace*{-0.1mm}the estimate \eqref{intro:general_estimate} but turns out to be particularly useful for establishing~efficiency~properties. 
	In fact, for the $p$--Dirichlet problem, i.e., if $\phi\!\vcentcolon=\!\smash{\frac{1}{p}\vert \cdot\vert^p}\!\in\! C^1(\mathbb{R}^d)$,~${p\!\in\! (1,\infty)}$, resorting~to~\eqref{intro:a_posteriori_3}, we will find that the primal-dual a posteriori error~estimator~$\eta_h^2(u_h^{\textit{c}},z_h^{\textit{rt}})$, where $u_h^{\textit{c}}\in \mathcal{S}^1_D(\mathcal{T}_h)$ denotes the unique minimizer~of~${I_h^{\textit{c}}\vcentcolon= I|_{\smash{\mathcal{S}^1_D(\mathcal{T}_h)}}\hspace*{-0.1em}:\hspace*{-0.1em} \mathcal{S}^1_D(\mathcal{T}_h)\to \mathbb{R}}$,~is~\mbox{globally} equivalent to the classical residual type a posteriori error estimator $\smash{\eta_{\textit{res},h}^2(u_h^{\textit{c}})}$, cf.  \cite{DK08},~and,~therefore,~reliable, efficient and equivalent to the error quantity $\smash{\rho_I^2(u_h^{\textit{c}},u)}$ (if suitably chosen).~More~generally, using the triangle inequality in \eqref{intro:a_posteriori_3}, we see that the primal-dual a posteriori~error~estimator is estimated by approximation errors of conforming and non-conforming approximations.

    \subsection{New contributions}
   
   	\qquad The \hspace*{-0.15mm}bound \hspace*{-0.15mm}\eqref{intro:general_estimate}  \hspace*{-0.15mm}and \hspace*{-0.15mm}variants \hspace*{-0.15mm}thereof \hspace*{-0.15mm}are \hspace*{-0.15mm}well-known \hspace*{-0.15mm}in \hspace*{-0.15mm}literature,~\hspace*{-0.15mm}cf.~\hspace*{-0.15mm}\cite{RX97,Rep99,Rep20C,Rep08,BM20,Bar21}. 
   	Foremost, in \cite{Rep08}, S. I. Repin  proposed general a posteriori error estimates based on estimating the approximation error by the primal-dual gap. In particular, he pointed out that these error bounds require a quasi-optimal dual vector field $\tilde{z}_h\in \smash{W^{p'}_N(\textup{div};\Omega)}$~to~be~\mbox{practicable}. In the continuous case and if, e.g.,  $\phi\!\in\! C^1(\mathbb{R}^d)$ with
   	$\vert D\phi(t)\vert\!\leq \!c_0\vert t\vert^{p-1}+c_1$~for~all~${t\!\in \!\mathbb{R}^d}$, which is already well-understood, the reconstruction of such~a~\mbox{quasi-optimal} vector field is challenging as, e.g., a maximizer~${z\!\in\! \smash{W^{p'}_N(\textup{div};\Omega)}}$~of~\eqref{intro:dual}~needs~to~\mbox{satisfy}~the
   	optimality relation  $z=D\phi(\nabla u)$ \hspace*{-0.1mm}in \hspace*{-0.1mm}$L^{p'}(\Omega;\mathbb{R}^d)$~\hspace*{-0.1mm}and,~\hspace*{-0.1mm}thus,~\hspace*{-0.1mm}depends~\hspace*{-0.1mm}on~\hspace*{-0.1mm}a~\hspace*{-0.1mm}minimizer~\hspace*{-0.1mm}${u\in W^{1,p}_D(\Omega)}$
   	\hspace*{-0.1mm}of~\hspace*{-0.1mm}\eqref{intro:primal}.
   	For a discrete reconstruction, however, a discrete analogue of such an  optimality relation has yet been unavailable. In \cite{Bar21}, if, e.g., $\phi\in C^1(\mathbb{R}^d)$  and $\psi_h(x,\cdot)\in C^1(\mathbb{R})$~for~a.e.~${x\in\Omega}$~-- or already L. D. Marini in \cite{Mar85} in the~linear~case~-- via \eqref{intro:reconstruction_formula}, the~desired~discrete \mbox{analogue}, commonly referred to as generalized Marini formula, has recently~been~provided.~We~merge these results and extend them to non-differentiable~convex~minimization~problems, cf. \cite{BK22}.
   	The reconstruction formula \eqref{intro:reconstruction_formula} was also  referred to in \cite{LLC18,LC20} in the~case~of~the~non-linear Dirichlet problem. However, the different a posteriori error estimates derived~therein~rely~less on convex duality arguments such  as, e.g., \eqref{intro:general_estimate} or \eqref{intro:a_posteriori_3} do, but more on the reconstruction~of~an $\mathcal{S}^3_D(\mathcal{T}_h)$--conformal\footnote{Here, $\mathcal{S}^3_D(\mathcal{T}_h)$ denotes space of globally continuous and element-wise cubic functions that vanish~on~$\Gamma_D$.\vspace*{-12mm}} companion, which is computationally~cheap~but  rather indirect. Our approach is direct, general and incurs  an effort for the~computation~of~the~\mbox{right-hand}~side in \eqref{intro:general_estimate} comparable to the effort of the computation of the left-hand side in \eqref{intro:general_estimate}, i.e., to the computation~of~the primal approximation $\tilde{u}_h\in \mathcal{S}^1_D(\mathcal{T}_h)$. 
   	In the~case~of~the~\mbox{Poisson}~\mbox{problem}, in \cite{Brae07,BS08}, D.~Braess and J. Schöberl  equally resorted to convex duality arguments and the explicit reconstruction of a quasi-optimal vector field $\tilde{z}_h\in \smash{W^{p'}_N(\textup{div};\Omega)}$.~However,~their reconstruction technique, also called equilibration, cf. \cite{PraSyn47,AinOde00,LucWoh04,ErnVor15,AnjPau19,SmeVor20}, is based~on~local corrections on each patch, which equally is computationally cheap. 
   	 As~a~whole, we propose the combination of primal-dual a posteriori error estimates together with the reconstruction of quasi-optimal vector fields $\tilde{z}_h\in \smash{W^{p'}_N(\textup{div};\Omega)}$ based on reconstruction formulas like \eqref{intro:reconstruction_formula}  as a broadly applicable and usually computationally cheap alternative~to  residual type a posteriori error estimators. 
   Apart from that, numerical~experiments,~cf.~Section~\ref{sec:numerical_experiments}, justify the choice $\tilde{u}_h=I_hu_h^{\textit{cr}}\in\smash{ \mathcal{S}^1_D(\mathcal{T}_h)}$, where $I_h:\smash{ \mathcal{S}^{1,\textit{cr}}_D(\mathcal{T}_h)} \to \smash{\mathcal{S}^1_D(\mathcal{T}_h)}$ is a suitable~quasi-interpolation~operator, as conformal approximation of~$u\in \smash{W^{1,p}_D(\Omega)}$.~This,~in~turn, reduces the computational effort to the same level as,~e.g.,~for~residual~type~error~estimators.

   	\subsection{Outline}
   	
   	\qquad \textit{This article is organized as follows:} In Section~\ref{sec:preliminaries}, we introduce the employed~\mbox{notation}, define the relevant finite element spaces and give a brief review of continuous~and~discrete convex minimization problems. In particular, we prove discrete~convex~optimality~relations \hspace*{-0.15mm}under \hspace*{-0.15mm}minimal \hspace*{-0.15mm}regularity \hspace*{-0.15mm}assumptions \hspace*{-0.15mm}(cf. \hspace*{-0.15mm}\!Proposition \hspace*{-0.15mm}\ref{prop:optimality}). \hspace*{-0.15mm}\!In~\hspace*{-0.15mm}\mbox{Section}~\hspace*{-0.15mm}\ref{sec:general},~\hspace*{-0.15mm}we~\hspace*{-0.15mm}\mbox{discuss}~\hspace*{-0.15mm}a~\hspace*{-0.15mm}\mbox{general} \hspace*{-0.1mm}a \hspace*{-0.1mm}posteriori \hspace*{-0.1mm} error \hspace*{-0.1mm}estimate \hspace*{-0.1mm}and
   	\hspace*{-0.1mm}potential \hspace*{-0.1mm}error \hspace*{-0.1mm}sources \hspace*{-0.1mm}that~\hspace*{-0.1mm}may~\hspace*{-0.1mm}need~\hspace*{-0.1mm}to~\hspace*{-0.1mm}be~\hspace*{-0.1mm}taken~\hspace*{-0.1mm}into~\hspace*{-0.1mm}\mbox{account}.  In~\hspace*{-0.1mm}Section~\hspace*{-0.1mm}\ref{sec:post-processing}, \hspace*{-0.1mm}the \hspace*{-0.1mm}general \hspace*{-0.1mm}a
   	 \hspace*{-0.1mm}posteriori \hspace*{-0.1mm}error \hspace*{-0.1mm}estimate \hspace*{-0.1mm}is \hspace*{-0.1mm}refined \hspace*{-0.1mm}using \hspace*{-0.1mm}particular~\hspace*{-0.1mm}convex~\hspace*{-0.1mm}\mbox{duality} relations and a post-processing of a discrete
   	 convex minimization problem.~In~Section~\ref{sec:applications},~we apply these results to
well-known \hspace*{-0.1mm}convex \hspace*{-0.1mm}minimization \hspace*{-0.1mm}problems \hspace*{-0.1mm}including \hspace*{-0.1mm}the \hspace*{-0.1mm}\mbox{$p$\hspace*{-0.1mm}--\hspace*{-0.1mm}Dirichlet} \hspace*{-0.1mm}problem \hspace*{-0.1mm}and \hspace*{-0.1mm}an \hspace*{-0.1mm}optimal
   	 \hspace*{-0.1mm}design \hspace*{-0.1mm}problem, \hspace*{-0.1mm}a \hspace*{-0.1mm}prototypical \hspace*{-0.1mm}example \hspace*{-0.1mm}from \hspace*{-0.1mm}topology \hspace*{-0.1mm}optimization. For the $p$--Dirichlet problem, we establish a reliability and efficiency~result~(cf.~Theorem~\ref{thm:equivalences}).
   	 \hspace*{-0.1mm}In~\hspace*{-0.15mm}Section~\hspace*{-0.15mm}\ref{sec:numerical_experiments},~\hspace*{-0.15mm}we~\hspace*{-0.15mm}\mbox{confirm} \hspace*{-0.15mm}our \hspace*{-0.15mm}theoretical~\hspace*{-0.15mm}findings~\hspace*{-0.15mm}via~\hspace*{-0.15mm}numerical~\hspace*{-0.15mm}experiments.~\hspace*{-0.15mm}In~\hspace*{-0.1mm}Appendix~\hspace*{-0.1mm}\ref{subsec:convex_analysis}, we \hspace*{-0.15mm}collect  \hspace*{-0.15mm}definitions \hspace*{-0.15mm}and \hspace*{-0.15mm}results \hspace*{-0.15mm}from \hspace*{-0.15mm}convex~\hspace*{-0.15mm}analysis \hspace*{-0.15mm}needed \hspace*{-0.15mm}in \hspace*{-0.15mm}the \hspace*{-0.15mm}paper.~\hspace*{-0.15mm}Error~\hspace*{-0.15mm}estimates for the node-averaging operator in terms of shifted $N$--functions~are~proved~in~Appendix~\ref{subsec:auxiliary}.~~~~~
   	 
	\section{Preliminaries}\label{sec:preliminaries}

		\qquad Throughout the entire article, if not otherwise specified, we denote by ${\Omega\subseteq \mathbb{R}^d}$,~${d\in\mathbb{N}}$, a bounded polyhedral Lipschitz domain, whose topological boundary is disjointly divided~into a closed Dirichlet part $\Gamma_D$ and a Neumann part $\Gamma_N$, i.e.,  ${\partial\Omega=\Gamma_D\cup\Gamma_N}$~and~${\emptyset=\Gamma_D\cap\Gamma_N}$.\vspace*{-0.5mm}

	\subsection{Standard function spaces}\label{subsec:function_spaces}

	\qquad For $p\in \left[1,\infty\right]$ and $l\in \mathbb{N}$, we employ the standard notations\vspace*{-0.5mm}\footnote{Here, $W^{\smash{-\frac{1}{p},p}}(\Gamma_N)\vcentcolon=(W^{\smash{1-\frac{1}{p'},p'}}(\Gamma_N))^*$ and $W^{\smash{-\frac{1}{p},p}}(\partial\Omega)\vcentcolon=(W^{\smash{1-\frac{1}{p'},p'}}(\partial\Omega))^*$ .\vspace*{-10mm}}
	\begin{align*}
		\begin{aligned}
		W^{1,p}_D(\Omega;\mathbb{R}^l)&\vcentcolon=\big\{v\in L^p(\Omega;\mathbb{R}^l)&&\hspace*{-3.25mm}\mid \nabla v\in L^p(\Omega;\mathbb{R}^{l\times d}),\, \textup{tr}(v)=0\text{ in }L^p(\Gamma_D;\mathbb{R}^l)\big\},\\
		W^{p}_N(\textup{div};\Omega)&\vcentcolon=\big\{y\in L^p(\Omega;\mathbb{R}^d)&&\hspace*{-3.25mm}\mid \textup{div}(y)\in L^p(\Omega),\,\textup{tr}(y)\cdot n=0\text{ in }W^{-\frac{1}{p},p}(\Gamma_N)\big\},
	\end{aligned}
	\end{align*}
	$\smash{W^{1,p}(\Omega;\mathbb{R}^l)\vcentcolon=W^{1,p}_D(\Omega;\mathbb{R}^l)}$ if $\smash{\Gamma_D=\emptyset}$, and $\smash{W^{p}(\textup{div};\Omega)\vcentcolon=W^{p}_N(\textup{div};\Omega)}$ if $\smash{\Gamma_N=\emptyset}$,
	where~we denote \hspace*{-0.1mm}by \hspace*{-0.1mm}$\textup{tr}\hspace*{-0.1em}:\hspace*{-0.1em}\smash{W^{1,p}(\Omega;\mathbb{R}^l)}\hspace*{-0.12em}\to\hspace*{-0.12em}\smash{L^p(\partial\Omega;\mathbb{R}^l)}$ \hspace*{-0.1mm}and \hspace*{-0.1mm}by \hspace*{-0.1mm}$
	\textup{tr}(\cdot)\cdot n\hspace*{-0.1em}:\hspace*{-0.1em}\smash{W^p(\textup{div};\Omega)}\hspace*{-0.12em}\to\hspace*{-0.12em} \smash{W^{-\frac{1}{p},p}(\partial\Omega)}$,~\hspace*{-0.1mm}the~\hspace*{-0.1mm}trace and normal trace operator, resp. In particular, we  \mbox{predominantly}~\mbox{omit} $\textup{tr}(\cdot)$~in~this~context. In addition, we resort to the abbreviations~$L^p(\Omega) \hspace*{-0.1em}\vcentcolon=\hspace*{-0.1em} L^p(\Omega;\mathbb{R}^1)$,~${W^{1,p}(\Omega)\hspace*{-0.1em}\vcentcolon=\hspace*{-0.1em}W^{1,p}(\Omega;\mathbb{R}^1)}$~and $W^{1,p}_D(\Omega)\vcentcolon=W^{1,p}_D(\Omega;\mathbb{R}^1)$.\vspace*{-0.5mm}

	\subsection{Triangulations and standard finite element spaces}\label{subsec:finite_elements}
	
	\qquad In what follows, we will always denote by  $\mathcal{T}_h$, $h\hspace*{-0.1em}>\hspace*{-0.1em}0$,  a sequence of~regular,~i.e.,~\mbox{uniformly} shape regular and conforming, triangulations of $\Omega\subseteq \mathbb{R}^d$, $d\in\mathbb{N}$, cf. \cite{EG21}. The~sets~$\mathcal{S}_h$~and~$\mathcal{N}_h$  contain the sides and vertices, resp., of the elements of $\mathcal{T}_h$.	In the context of locally refined meshes, we employ the average mesh-size $\smash{h\vcentcolon=(\vert \Omega\vert/\textup{card}(\mathcal{N}_h))^{\frac{1}{d}}}>0$.~In~addition,~we~define $h_T\vcentcolon=\textup{diam}(T)$  for all $T\in \mathcal{T}_h$ and $h_S\vcentcolon=\textup{diam}(S)$  for all $S\in \mathcal{S}_h$.
	For \hspace*{-0.1mm}$k\hspace*{-0.17em}\in\hspace*{-0.17em} \mathbb{N}\cup\{0\}$~\hspace*{-0.1mm}and~\hspace*{-0.1mm}$T\hspace*{-0.17em}\in\hspace*{-0.17em} \mathcal{T}_h$, let $\mathcal{P}_k(T)$ \hspace*{-0.1mm}denote \hspace*{-0.1mm}the \hspace*{-0.1mm}set \hspace*{-0.1mm}of \hspace*{-0.1mm}polynomials \hspace*{-0.1mm}of  \hspace*{-0.1mm}maximal \hspace*{-0.1mm}degree~\hspace*{-0.1mm}$k$~\hspace*{-0.1mm}on~\hspace*{-0.1mm}$T$. Then, for~$k\hspace*{-0.17em}\in\hspace*{-0.17em} \mathbb{N}\cup\{0\}$~and $l\in \mathbb{N}$,  the sets of continuous and element-wise~polynomial~functions or vector~fields,~resp., are defined by\vspace*{-0.5mm}
	\begin{align*}
	\begin{aligned}
	\mathcal{S}^k(\mathcal{T}_h)^l&\vcentcolon=	\big\{v_h\in C^0(\overline{\Omega};\mathbb{R}^l)\hspace*{-3mm}&&\mid v_h|_T\in\mathcal{P}_k(T)^l\text{ for all }T\in \mathcal{T}_h\big\},\\
	\mathcal{L}^k(\mathcal{T}_h)^l&\vcentcolon=   \big\{v_h\in L^\infty(\Omega;\mathbb{R}^l)\hspace*{-3mm}&&\mid v_h|_T\in\mathcal{P}_k(T)^l\text{ for all }T\in \mathcal{T}_h\big\}.
	\end{aligned}
	\end{align*}
	The element-wise mesh-size function $h_\mathcal{T}\in \mathcal{L}^0(\mathcal{T}_h)$ is defined~by~${h_\mathcal{T}|_T\vcentcolon=h_T}$~for~all~${T\in \mathcal{T}_h}$.
 For every $T\in \mathcal{T}_h$ and $S\in \mathcal{S}_h$, we denote  by $\smash{x_T\vcentcolon=\frac{1}{d+1}\sum_{z\in \mathcal{N}_h\cap T}{z}}$ and $\smash{x_S\vcentcolon=\frac{1}{d}\sum_{z\in \mathcal{N}_h\cap S}{z}}$,  the midpoints (barycenters) of $T$ and $S$, resp. The $L^2$--projection~operator onto \mbox{element-wise} constant functions or vector fields, resp., is denoted by\vspace*{-0.5mm}
	\begin{align*}
		\smash{\Pi_h:L^1(\Omega;\mathbb{R}^l)\to \mathcal{L}^0(\mathcal{T}_h)^l.}
	\end{align*}
	For every $v_h\in \mathcal{L}^1(\mathcal{T}_h)^l$, it holds $\Pi_hv_h|_T=v_h(x_T)$ for every $T\in \mathcal{T}_h$. For~${p\in \left[1,\infty\right]}$,~there~exists a constant $c_{\Pi}\!>\!0$ such that for all $v\!\in\! L^p(\Omega;\mathbb{R}^l)$ and $T\!\in\! \mathcal{T}_h$,~cf.~\cite[Thm.~18.16]{EG21},~it~holds
	\begin{description}[noitemsep,topsep=1.5pt,font=\normalfont\itshape]
		\item[(L0.1)]\hypertarget{(L0.1)}{} $\|\Pi_h v\|_{L^p(T;\mathbb{R}^l)}\leq \| v\|_{L^p(T;\mathbb{R}^l)}$,
		\item[(L0.2)]\hypertarget{(L0.2)}{} $\|v-\Pi_h v\|_{L^p(T;\mathbb{R}^l)}\leq c_{\Pi}h_T \|\nabla v\|_{L^p(T;\mathbb{R}^{l\times d})}$ if $v\in W^{1,p}(\Omega;\mathbb{R}^l)$.
	\end{description}
	The node-averaging operator $\mathcal{J}_h^{\textit{av}}\hspace*{-0.15em}:\hspace*{-0.15em}\mathcal{L}^k(\mathcal{T}_h)^l\hspace*{-0.2em}\to\hspace*{-0.15em} \mathcal{S}^k_D(\mathcal{T}_h)^l$, where $ {\mathcal{S}^k_D(\mathcal{T}_h)^l\hspace*{-0.2em}\vcentcolon=\hspace*{-0.15em}\mathcal{S}^k(\mathcal{T}_h)^l\hspace*{-0.15em}\cap\hspace*{-0.15em} W^{1,1}_D(\Omega)^l}$, denoting~for~$z\in \mathcal{D}_h^k$, where $ \mathcal{D}_h^k$ denotes the set of degrees of freedom associated~with~$\mathcal{S}^k(\mathcal{T}_h)^l$, by $\mathcal{T}_h(z)\vcentcolon=\{T\in \mathcal{T}_h\mid z\in T\}$ the set of elements sharing~$z$, for all ${v_h\!\in\! \mathcal{L}^k(\mathcal{T}_h)^l}$,~is~defined~by
	\begin{align*}
		\mathcal{J}_h^{\textit{av}}v_h\vcentcolon=\sum_{z\in \smash{\mathcal{D}_h^k}}{\langle v_h\rangle_z\varphi_z},\qquad \langle v_h\rangle_z\vcentcolon=\begin{cases}
			\frac{1}{\textup{card}(\mathcal{T}_h(z))}\sum_{T\in \mathcal{T}_h(z)}{(v_h|_T)(z)}&\;\text{ if }z\in \Omega\cup \Gamma_N,\\
			0&\;\text{ if }z\in \Gamma_D,
		\end{cases}
	\end{align*}
	where we denote by $(\varphi_z)_{\smash{z\in \mathcal{D}_h^k}}\!\subseteq\! \mathcal{S}^k(\mathcal{T}_h)$,  the nodal basis of $\mathcal{S}^k(\mathcal{T}_h)$.~For~${p\!\in\! [1,\infty]}$,~there~\mbox{exists} a \hspace*{-0.1mm}constant \hspace*{-0.1mm}$c_{\textit{av}}\!>\!0$ \hspace*{-0.1mm}such \hspace*{-0.1mm}that \hspace*{-0.1mm}for \hspace*{-0.1mm}all \hspace*{-0.1mm}$v_h\!\in \!\mathcal{L}^k(\mathcal{T}_h)^l$, \hspace*{-0.1mm}$T\!\in\! \mathcal{T}_h$,   \hspace*{-0.1mm}$m\!\in\!\{0,\dots,k+1\}$,~\hspace*{-0.1mm}cf.~\hspace*{-0.1mm}\mbox{\cite[\!Lem.~\!22.12]{EG21}},\footnote{Here, for every $S\in\mathcal{S}_h\setminus\partial \Omega$, $\jump{v_h}_S\vcentcolon=v_h|_{T_+}-v_h|_{T_-}$ on $S$, where $T_+, T_-\in \mathcal{T}_h$ satisfy $\partial T_+\cap\partial  T_-=S$, and for every $S\in\mathcal{S}_h\cap\partial \Omega$, $\jump{v_h}_S\vcentcolon=v_h|_T$ on $S$, where $T\in \mathcal{T}_h$ satisfies $S\subseteq \partial T$.}
	\begin{description}[noitemsep,topsep=1.5pt,font=\normalfont\itshape]
		\item[(AV.1)]\hypertarget{(AV.1)}{} $\|\nabla_h^m(v_h-\mathcal{J}_h^{\textit{av}}v_h)\|_{L^p(T;\mathbb{R}^{l\times d^m})}\leq \smash{c_{\textit{av}}\sum_{S\in \mathcal{S}_h(T)}{\big\| h_S^{1/p-m}\jump{v_h}_S\big\|_{L^p(S;\mathbb{R}^l)}}}$,
		\item[(AV.2)]\hypertarget{(AV.2)}{} $\|\mathcal{J}_h^{\textit{av}}v_h\|_{L^p(T;\mathbb{R}^l)}\leq c_{\textit{av}}\|v_h\|_{L^p(\omega_T;\mathbb{R}^l)}$,
	\end{description}
	where $\mathcal{S}_h(T)\hspace*{-0.05em}\vcentcolon=\hspace*{-0.05em}\{S\hspace*{-0.05em}\in\hspace*{-0.05em} \mathcal{S}_h\mid S\cap \textup{int}(\omega_T)\hspace*{-0.05em}\neq\hspace*{-0.05em} \emptyset\}$ and ${\omega_T\hspace*{-0.05em}\vcentcolon=\hspace*{-0.05em}\bigcup\{T'\hspace*{-0.05em}\in\hspace*{-0.05em} \mathcal{T}_h\mid T'\cap T\hspace*{-0.05em}\neq\hspace*{-0.05em} \emptyset\}}$~for~all~${T\hspace*{-0.05em}\in\hspace*{-0.05em} \mathcal{T}_h}$ and 
	$\nabla_h^m\hspace*{-0.12em}:\hspace*{-0.12em}\mathcal{L}^k(\mathcal{T}_h)^l\hspace*{-0.12em}\to\hspace*{-0.12em} \mathcal{L}^{k-1}(\mathcal{T}_h)^{l\times d^m}\!$, for every $v_h\hspace*{-0.12em}\in\hspace*{-0.12em}\mathcal{L}^k(\mathcal{T}_h)^l$ defined~by~${(\nabla_h^mv_h)|_T\hspace*{-0.12em}\vcentcolon=\hspace*{-0.12em}\nabla^m(v_h|_T)}$ for all $\smash{T\in \mathcal{T}_h}$, denotes the element-wise $m$--th~gradient~operator.
	
	\subsection{Crouzeix--Raviart finite elements}\label{subsec:crouzeix_raviart}
	
	\qquad A particular instance of a larger class of non-conforming finite element~spaces,~introdu-ced \hspace*{-0.1mm}in \hspace*{-0.1mm}\cite{CR73},
	\hspace*{-0.1mm}is \hspace*{-0.1mm}the \hspace*{-0.1mm}Crouzeix--Raviart \hspace*{-0.1mm}finite \hspace*{-0.1mm}element \hspace*{-0.1mm}space, \hspace*{-0.1mm}which \hspace*{-0.1mm}consists~\hspace*{-0.1mm}of~\hspace*{-0.1mm}\mbox{element-wise}~\hspace*{-0.1mm}affine functions that are continuous at the midpoints (barycenters) of inner element sides, i.e.,~~~~~
	\begin{align*}
		\mathcal{S}^{1,\textit{cr}}(\mathcal{T}_h)\vcentcolon=\bigg\{v_h\in \mathcal{L}^1(\mathcal{T}_h)\,\Big|\, \int_{S}{\jump{v_h}_S\,\textup{d}s}=0\text{ for all }S\in \mathcal{S}_h\setminus\partial\Omega\bigg\}.
	\end{align*}
	Crouzeix--Raviart finite element functions that vanish at the midpoints of boundary~element sides that correspond to the  Dirichlet boundary $\Gamma_D$ are contained~in~the~space
	\begin{align*}
		\smash{	\mathcal{S}^{1,\textit{cr}}_D(\mathcal{T}_h)\vcentcolon=\big\{v_h\in\mathcal{S}^{1,\textit{cr}}(\mathcal{T}_h)\mid v_h(x_S)=0\text{ for all }S\in \mathcal{S}_h\cap \Gamma_D\big\}.}
	\end{align*}
	In particular, we have that $	\mathcal{S}^{1,\textit{cr}}_D(\mathcal{T}_h)=	\mathcal{S}^{1,\textit{cr}}(\mathcal{T}_h)$ if $\Gamma_D=\emptyset$.
	A basis of  $\mathcal{S}^{1,\textit{cr}}(\mathcal{T}_h)$~is~given~by functions $\varphi_S\!\in\! \mathcal{S}^{1,\textit{cr}}(\mathcal{T}_h)$, $S\in \mathcal{S}_h$, satisfying the Kronecker~property $\varphi_S(x_{S'})=\delta_{S,S'}$ for all $S,S'\in \mathcal{S}_h$. A basis of  $\mathcal{S}^{1,\textit{cr}}_D(\mathcal{T}_h)$~is~given~by~$(\varphi_S)_{S\in \mathcal{S}_h;S\not\subseteq\Gamma_D}$. 
	Since for every $v_h\in \mathcal{S}^{1,\textit{cr}}_D(\mathcal{T}_h)$, it holds $v_h=v_h(x_S)+\nabla_h v_h(\textup{id}_{\mathbb{R}^d}-x_S)$ in $T_+\cup T_-$ for all ${T_+,T_-\in \mathcal{T}_h}$~with~${T_+\cap T_-\!=\!S\!\in\!\mathcal{S}_h}$, we have that $\jump{v_h}_S=\jump{\nabla_h v_h}_S(\textup{id}_{\mathbb{R}^d}-x_S)$ in $S$ for all $S\in\mathcal{S}_h$. As an immediate consequence, also resorting to the discrete trace inequality\footnote{Appealing to \cite[Lemma 12.8]{EG21}, for $p\in [1,\infty]$ and $k\in \mathbb{N}\cup\{0\}$, there exists a constant~${c_{\textup{tr}}>0}$ such that for every $v_h\in \mathcal{L}^0(\mathcal{T}_h)$, it holds $h_T^{\smash{1/p}}\|v_h\|_{L^p(S)}\leq c_{\textup{tr}} \|v_h\|_{L^p(T)}$ for all ${T\in \mathcal{T}_h}$~and~${S\in \mathcal{S}_h}$~with~${S\subseteq \partial T}$.}, for fixed $p\in [1,\infty]$ and every $S\in \mathcal{S}_h$,~it~holds
	\begin{align}
		\begin{aligned}
\big\| h_S^{1/p}\jump{v_h}_S\big\|_{L^p(S)}&\leq \big\| h_S^{1/p+1}\jump{\nabla_h v_h}_S\big\|_{L^p(S;\mathbb{R}^d)}\\&\leq c_{\textup{tr}}\sum_{T\in \mathcal{T}_h;S\subseteq \partial T}{\big\|h_T\nabla_h v_h\big\|_{L^p(T;\mathbb{R}^d)}}
.
	\end{aligned}\label{eq:crouzeix_raviart_1}
	\end{align}
	A combination of \eqref{eq:crouzeix_raviart_1} and  \textit{(\hyperlink{(AV.1)}{AV.1})}  implies that for $p\!\in\! [1,\infty]$, there~exists~a~constant~${c_{\textit{av}}\!>\!0}$ such that for all $v_h\in \smash{\mathcal{S}^{1,\textit{cr}}_D(\mathcal{T}_h)}$, $T\in \mathcal{T}_h$ and $m\in \{0,1,2\}$, we have that
	\begin{description}[noitemsep,topsep=0.1pt,font=\normalfont\itshape]
		\item[(AV.3)]\hypertarget{(AV.3)}{} $\|\nabla_h^m (v_h-\mathcal{J}_h^{\textit{av}}v_h)\|_{L^p(T;\mathbb{R}^{d^m})}\leq c_{\textit{av}}\sum_{S\in \mathcal{S}_h(T)}{\big\| h_S^{1/p+1-m}\jump{\nabla_h v_h}_S\big\|_{L^p(S;\mathbb{R}^d)}}$,
		\item[(AV.4)]\hypertarget{(AV.4)}{} $\|\nabla_h^m (v_h- \mathcal{J}_h^{\textit{av}}v_h)\|_{L^p(T;\mathbb{R}^{d^m})}\leq c_{\textit{av}}\big\|h_{\mathcal{T}}^{1-m}\nabla_hv_h\big\|_{L^p(\omega_T;\mathbb{R}^d)}$.
	\end{description}
	
	\subsection{Raviart--Thomas finite elements}\label{subsec:raviart_thomas}

	\qquad The lowest order Raviart--Thomas finite element space, introduced~in~\cite{RT75},~\mbox{consists}~of~el-ement-wise affine vector fields that have continuous constant normal components on inner elements sides, i.e.,\footnote{Here, for every $S\in\mathcal{S}_h\setminus\partial \Omega$, $\jump{y_h\cdot n}_S\vcentcolon=\smash{y_h|_{T_+}\cdot n_{T_+}+y_h|_{T_-}\cdot n_{T_-}}$ on $S$, where $T_+, T_-\in \mathcal{T}_h$ satisfy $\smash{\partial T_+\cap\partial  T_-=S}$,  and for every $T\in \mathcal{T}_h$, $\smash{n_T:\partial T\to \mathbb{S}^{d-1}}$ denotes the outward unit normal vector field~to~$ T$, 
		and  for every $\smash{S\in\mathcal{S}_h\cap\partial \Omega}$, $\smash{\jump{y_h\cdot n}_S\vcentcolon=\smash{y_h|_T\cdot n}}$ on $S$, where $T\in \mathcal{T}_h$ satisfies $S\subseteq \partial T$ and $\smash{n:\partial\Omega\to \mathbb{S}^{d-1}}$ denotes the outward unit normal vector field to $\Omega$.\vspace*{-12.5mm}}
	\begin{align*}
\smash{\mathcal{R}T^0(\mathcal{T}_h)\vcentcolon=\big\{y_h\in \mathcal{L}^1(\mathcal{T}_h)^d\mid }&\,\smash{y_h|_T\cdot n_T=\textup{const}\text{ in }\partial T\text{ for  all }T\in \mathcal{T}_h,}\\ 
&\smash{	\jump{y_h\cdot n}_S=0\text{ on }S\text{ for all }S\in \mathcal{S}_h\setminus \partial\Omega\big\}.}
	\end{align*}
	Raviart--Thomas finite element functions that possess vanishing normal components~on~the Neumann boundary $\Gamma_N$ are contained in the space 
	\begin{align*}
		\smash{\mathcal{R}T^0_N(\mathcal{T}_h)\vcentcolon=\big\{y_h\in	\mathcal{R}T^0(\mathcal{T}_h)\mid y_h\cdot n=0\text{ on }\Gamma_N\big\}.}
	\end{align*}
	In particular, we have that $\mathcal{R}T^0_N(\mathcal{T}_h)=\mathcal{R}T^0(\mathcal{T}_h)$ if $\Gamma_N=\emptyset$. A basis of  $\mathcal{R}T^0(\mathcal{T}_h)$ is given by  vector fields $\psi_S\in \mathcal{R}T^0(\mathcal{T}_h)$, $S\in \mathcal{S}_h$, satisfying the Kronecker property $\psi_S|_{S'}\cdot n_{S'}=\delta_{S,S'}$ on $S'$ for all $S'\in \mathcal{S}_h$, where $n_S$ for all $S\in \mathcal{S}_h$ is the unit normal~vector~on~$S$~that~points from $T_-$ to $T_+$ if $T_+\cap T_-=S\in \mathcal{S}_h$. A~basis~of  $\mathcal{R}T^0_N(\mathcal{T}_h)$~is~given~by~$\psi_S\!\in\! \mathcal{R}T^0_N(\mathcal{T}_h)$,~${S\!\in\! \mathcal{S}_h\!\setminus\!\Gamma_N}$.

	\subsection{Integration-by-parts formula with respect to $\mathcal{S}^{1,\textit{cr}}(\mathcal{T}_h)$ and $ \mathcal{R}T^0(\mathcal{T}_h)$}\label{subsec:integration_by_parts}
		\qquad An \hspace*{-0.15mm}element-wise \hspace*{-0.15mm}integration-\hspace*{-0.15mm}by-\hspace*{-0.15mm}parts \hspace*{-0.15mm}implies \hspace*{-0.15mm}that \hspace*{-0.15mm}for \hspace*{-0.15mm}all \hspace*{-0.15mm}$v_h\hspace*{-0.2em}\in\hspace*{-0.2em} \mathcal{S}^{1,\textit{cr}}(\mathcal{T}_h)$ \hspace*{-0.15mm}and \hspace*{-0.15mm}${y_h\hspace*{-0.2em}\in\hspace*{-0.2em} \mathcal{R}T^0(\mathcal{T}_h)}$, we have the integration-by-parts
	formula
	\begin{align}
		\int_{\Omega}{ \nabla_hv_h\cdot\Pi_hy_h\,\textup{d} x}+\int_{\Omega}{ \Pi_hv_h\,\textup{div}(y_h)\,\textup{d} x}=\int_{\partial\Omega}{v_h\,y_h\cdot n\,\textup{d}s}.\label{eq:pi}
	\end{align}
	Here, we have  exploited that $y_h\in \mathcal{R}T^0(\mathcal{T}_h)$ has continuous constant  normal components~on inner element sides, i.e., 	$\jump{y_h\cdot n}_S=0$ on $S$ for every $S\in \mathcal{S}_h\setminus \partial\Omega$, and that the jumps~of ${v_h\in \mathcal{S}^{1,\textit{cr}}(\mathcal{T}_h)}$  across inner element sides have vanishing integral~mean,~i.e.,~${\int_{S}{\jump{v_h}_S\,\textup{d}s}=0}$ for every ${S\in \mathcal{S}_h\setminus\partial\Omega}$.
	In particular, for all $v_h\in \smash{\mathcal{S}^{1,\textit{cr}}_D(\mathcal{T}_h)}$~and~${y_h\in \mathcal{R}T^0_N(\mathcal{T}_h)}$,~\eqref{eq:pi}~reads~as 
	\begin{align}
		\int_{\Omega}{ \nabla_hv_h\cdot\Pi_h y_h\,\textup{d} x}=-\int_{\Omega}{ \Pi_hv_h\,\textup{div}(y_h)\,\textup{d} x}.\label{eq:pi0}
	\end{align}
	In \hspace*{-0.15mm}\cite{CP20,Bar21}, \hspace*{-0.15mm}the \hspace*{-0.15mm}integration-by-parts \hspace*{-0.15mm}formula \hspace*{-0.15mm}\eqref{eq:pi0} \hspace*{-0.15mm}formed \hspace*{-0.15mm}a \hspace*{-0.15mm}cornerstone \hspace*{-0.15mm}in \hspace*{-0.15mm}the~\hspace*{-0.15mm}derivation~\hspace*{-0.15mm}of~\hspace*{-0.15mm}dis-crete \hspace*{-0.15mm}convex \hspace*{-0.15mm}duality \hspace*{-0.15mm}relations \hspace*{-0.15mm}and, \hspace*{-0.15mm}as \hspace*{-0.15mm}such, \hspace*{-0.15mm}also \hspace*{-0.15mm}plays \hspace*{-0.15mm}a \hspace*{-0.15mm}central 
	 \hspace*{-0.15mm}role 
	 \hspace*{-0.15mm}in \hspace*{-0.15mm}the~\hspace*{-0.15mm}hereinafter~\hspace*{-0.15mm}analysis.
	
	\subsection{Convex minimization problems}\label{subsec:convex_min}
	
\qquad	Let $\phi:\mathbb{R}^d\to \mathbb{R}\cup\{+\infty\}$ be a proper, convex and lower semi-continuous~functional and~let $\psi:\Omega\times\mathbb{R}\to \mathbb{R}\cup\{+\infty\}$ be (Lebesgue--)measurable such that for almost every $x\in \Omega$, the function  $\psi(x,\!\cdot):\Omega\times\mathbb{R}\to \mathbb{R}\cup\{+\infty\}$ is proper,  convex and~lower~\mbox{semi-continuous}.~Then, for given $p\in \left(1,\infty\right)$, we examine  the convex minimization problem  that seeks for a function ${u\in W^{1,p}_D(\Omega)}$ that is minimal for $I:W^{1,p}_D(\Omega)\to \mathbb{R}\cup\{+\infty\}$, for every~${v\!\in\! \smash{W^{1,p}_D(\Omega)}}$~defined~by
	\begin{align}
		I(v)\vcentcolon=\int_{\Omega}{\phi(\nabla v)\,\textup{d}x}+\int_{\Omega}{\psi(\cdot,v)\,\textup{d}x}.\label{primal}
	\end{align}
	We will always assume that $\phi:\mathbb{R}^d\to \mathbb{R}\cup\{+\infty\}$ and $\psi:\Omega\times\mathbb{R}\to \mathbb{R}\cup\{+\infty\}$  are such~that \eqref{primal}  is proper, convex, weakly coercive,
	and lower semi-continuous, so that the direct me-thod in the calculus of variations implies the existence~of~a~\mbox{minimizer}~${u\in W^{1,p}_D(\Omega)}$~of~\eqref{primal}.
	A (Fenchel) dual  problem to \eqref{primal} is given by  the maximization of ${D\!:\!\smash{L^{p'}(\Omega;\mathbb{R}^d)}\!\to\! \mathbb{R}\hspace*{-0.1em}\cup\hspace*{-0.1em}\{\hspace*{-0.1em}-\infty\hspace*{-0.1em}\}}$, for every $y\in L^{p'}(\Omega;\mathbb{R}^d)$ defined by
	\begin{align}
		D(y)\vcentcolon=-\int_{\Omega}{\phi^*( y)\,\textup{d}x}-F^*(\textup{div}(y)),\label{dual}
	\end{align}
	where $\textup{div}: L^{p'}(\Omega;\mathbb{R}^d)\to (W^{1,p}_D(\Omega))^*$~for~every~$y\in  \smash{L^{p'}(\Omega;\mathbb{R}^d)}$ and $v\in \smash{W^{1,p}_D(\Omega)}$ is defined by ${\langle \textup{div}(y),v\rangle_{\smash{W^{1,p}_D(\Omega)}}\vcentcolon=-\int_{\Omega}{y\cdot\nabla v\,\textup{d}x}}$, and
	$\smash{F^*:L^{p'}(\Omega)\to \mathbb{R}\cup\{\pm\infty \}}$ denotes the Fenchel conjugate  to ${F:L^p(\Omega)\to \mathbb{R}\cup\{+\infty\}}$, defined by $F(v)\vcentcolon=\int_{\Omega}{\psi(\cdot,v)\,\textup{d}x}$~for~every~${v\in L^p(\Omega)}$.  Note that for every ${y\in \smash{W^{p'}_N(\textup{div};\Omega)}}$, we have the explicit representation
	\begin{align*}
	D(y)=-\int_{\Omega}{\phi^*( y)\,\textup{d}x}-\int_{\Omega}{\psi^*(\cdot,\textup{div}(y))\,\textup{d}x}.
	\end{align*}
	In general, cf. \cite[Proposition 1.1]{ET99}, we have  the weak duality relation
	\begin{align}
		I(u)=\inf_{v\in W^{1,p}_D(\Omega)}{I(v)}\ge \sup_{y\in L^{p'}(\Omega;\mathbb{R}^d)}{D(y)}.\label{weak_duality}
	\end{align}
	If, for instance, $\phi\in C^0(\mathbb{R}^d)$ and $\psi:\Omega\times \mathbb{R}\to \mathbb{R}$ is a Carath\'eodory mapping,  then, in \cite[p. 113 ff.]{ET99}, 
	it is shown that \eqref{dual} admits at least one maximizer $z\in \smash{W^{p'}_N(\textup{div};\Omega)}$, i.e., \eqref{dual} can be \hspace*{-0.1mm}restricted \hspace*{-0.1mm}to \hspace*{-0.1mm}\hspace*{-0.1mm}the \hspace*{-0.1mm}maximization \hspace*{-0.1mm}in \hspace*{-0.1mm}$\smash{W^{p'}_N(\textup{div};\Omega)}$ \hspace*{-0.1mm}and \hspace*{-0.1mm}strong \hspace*{-0.1mm}duality~\hspace*{-0.1mm}applies,~\hspace*{-0.1mm}i.e.,~\hspace*{-0.1mm}we~\hspace*{-0.1mm}have~\hspace*{-0.1mm}that
	\begin{align}
		\smash{I(u)= D(z)}.\label{strong_duality}
	\end{align}
	In addition, cf. \cite[Proposition 5.1]{ET99}, we then have the optimality relations
	\begin{align}
	\begin{aligned}
z\cdot\nabla u&=\phi^*(z)+\phi(\nabla u)&&\quad\textup{ a.e. in  }\Omega,\\
\textup{div}(z)\, u&=\psi^*(\cdot,\textup{div}(z))+\psi(\cdot, u)&&\quad\textup{ a.e. in  }\Omega.
		\end{aligned}	\label{optimality_relations}
	\end{align}
	If $\phi\in C^1(\mathbb{R}^d)$ and there exist $c_0,c_1\ge 0$ such that $\vert D\phi(t)\vert\leq c_0\vert t\vert^{p-1}+c_1$ for all $t\in\mathbb{R}^d$,~then, by the Fenchel--Young identity~(cf.~\eqref{eq:fenchel_young_id}), \eqref{optimality_relations}$_1$ is equivalent to 
	\begin{align}
	\smash{z= D\phi(\nabla u)\quad\textup{ in }L^{p'}(\Omega;\mathbb{R}^d).}
	\end{align}
	Similarly, if 
	 $\psi(x,\cdot)\in C^1(\mathbb{R})$ for almost every $x\in \Omega$ and there exist $c_0\ge 0$ and $c_1\in L^{p'}(\Omega)$ such that $\vert D\psi(x,t)\vert\leq c_0\vert t\vert^{p-1}+c_1(x)$ for almost every $x\in \Omega$ and all~$t\in \mathbb{R}$,~then~\eqref{optimality_relations}$_2$~is equivalent to 
	\begin{align}
	\smash{\textup{div}(z)=D\psi(\cdot, u)\quad\textup{ in }L^{p'}(\Omega).}
	\end{align}
	In addition, for the remainder of this article, we further assume that \eqref{primal} is co-coercive~at~a \hspace*{-0.1mm}minimizer \hspace*{-0.1mm}$\smash{u\!\in\! \smash{W^{1,p}_D(\Omega)}}$, \hspace*{-0.1mm}i.e., 
	\hspace*{-0.1mm}there~\hspace*{-0.1mm}\mbox{exists}~\hspace*{-0.1mm}a~\hspace*{-0.1mm}\mbox{functional} \hspace*{-0.1mm}${\rho_I^2\!:\!W^{1,p}_D(\Omega)\!\times\! W^{1,p}_D(\Omega)\!\to\! \mathbb{R}_{\ge 0}}$~\hspace*{-0.1mm}such~\hspace*{-0.1mm}that for every ${v\in \smash{W^{1,p}_D(\Omega)}}$,~it~holds
	\begin{align}
		\smash{\rho_I^2(v,u)\leq I(v)-I(u).}\label{strong_convexity}
	\end{align}
	Note that, in general, $\rho_I^2:\smash{W^{1,p}_D(\Omega)}\times \smash{W^{1,p}_D(\Omega)}\to \mathbb{R}_{\ge 0}$ does not need be definite~and,~\mbox{hence}, $u\hspace*{-0.1em}\in\hspace*{-0.1em} \smash{W^{1,p}_D(\Omega)}$ does   not need be unique. If, however, $\rho_I^2\hspace*{-0.1em}:\hspace*{-0.1em}\smash{W^{1,p}_D(\Omega)}\times \smash{W^{1,p}_D(\Omega)}\hspace*{-0.1em}\to\hspace*{-0.1em} \mathbb{R}_{\ge 0}$~is~\mbox{definite}, then $u\in \smash{W^{1,p}_D(\Omega)}$  is unique.
	
	\subsection{Discrete convex minimization problem}\label{subsec:discrete_convex_min}
	
	\qquad Let $\psi_h:\Omega\times\mathbb{R}\to \mathbb{R}\cup\{+\infty\}$ denote a suitable approximation of $\psi:\Omega\times\mathbb{R}\to \mathbb{R}\cup\{+\infty\}$ such \hspace*{-0.1mm}that \hspace*{-0.1mm}$\psi_h(\cdot,t)\!\in\! \mathcal{L}^0(\mathcal{T}_h)$ \hspace*{-0.1mm}for \hspace*{-0.1mm}all \hspace*{-0.1mm}$t\!\in\! \mathbb{R}$ \hspace*{-0.1mm}and \hspace*{-0.1mm}for \hspace*{-0.1mm}almost \hspace*{-0.1mm}every \hspace*{-0.1mm}$x\!\in \!\Omega$, \hspace*{-0.1mm}${\psi_h(x,\cdot)\!:\!\Omega\!\times\!\mathbb{R}\!\to\! \mathbb{R}\!\cup\!\{+\infty\}}$ is a proper, convex  and lower semi-continuous functional. Then, for given $p\in \left(1,\infty\right)$, we examine the (discrete) convex minimization problem  that seeks for a function  $u_h^{\textit{cr}}\in \mathcal{S}^{1,\textit{cr}}_D(\mathcal{T}_h)$ that is minimal for $I_h^{\textit{cr}}:\smash{\mathcal{S}^{1,\textit{cr}}_D(\mathcal{T}_h)}\to \mathbb{R}\cup\{+\infty\}$, for every $v_h\in \smash{\mathcal{S}^{1,\textit{cr}}_D(\mathcal{T}_h)}$ defined by
	\begin{align}
		I_h^{\textit{cr}}(v_h)\vcentcolon=\int_{\Omega}{\phi(\nabla_h v_h)\,\textup{d}x}+\int_{\Omega}{\psi_h(\cdot,\Pi_h v_h)\,\textup{d}x}.\label{discrete_primal}
	\end{align}
	Once again, we always assume that $\phi\hspace*{-0.1em}:\hspace*{-0.1em}\mathbb{R}^d\hspace*{-0.1em}\to\hspace*{-0.1em} \mathbb{R}\cup\{+\infty\}$ and ${\psi_h\hspace*{-0.1em}:\hspace*{-0.1em}\Omega\hspace*{-0.1em}\times\hspace*{-0.1em}\mathbb{R}\hspace*{-0.1em}\to\hspace*{-0.1em} \mathbb{R}\cup\{+\infty\}}$~are~such that \eqref{discrete_primal} admits a minimizer $u_h^{\textit{cr}}\in \mathcal{S}^{1,\textit{cr}}_D(\mathcal{T}_h)$.
	The replacement of $\psi\hspace*{-0.1em}:\hspace*{-0.1em}\Omega\times\mathbb{R}\hspace*{-0.1em}\to\hspace*{-0.1em} \mathbb{R}\cup\{+\infty\}$ by the approximation ${\psi_h:\Omega\times\mathbb{R}\to \mathbb{R}\cup\{+\infty\}}$  and inserting  the $L^2$--projection operator $\Pi_h\!:\!L^1(\Omega)\!\to\! \mathcal{L}^0(\mathcal{T}_h)$ in \eqref{discrete_primal} is~\hspace*{-0.1mm}crucial~\hspace*{-0.1mm}for~\hspace*{-0.1mm}the~\hspace*{-0.1mm}derivation \hspace*{-0.1mm}of \hspace*{-0.1mm}discrete \hspace*{-0.1mm}convex~\hspace*{-0.1mm}duality~\hspace*{-0.1mm}\mbox{relations}, as it leads to
	\begin{align*}
	\inf_{v_h\in \mathcal{S}^{1,\textit{cr}}_D(\mathcal{T}_h)}{I_h^{\textit{cr}}(v_h)}=\inf_{\overline{v}_h\in \Pi_h(\mathcal{S}^{1,\textit{cr}}_D(\mathcal{T}_h))}{\overline{I}_h^{\textit{cr}}(\overline{v}_h)},
	\end{align*}
	cf. \cite{CP20,Bar21}, where $\smash{\overline{I}_h^{\textit{cr}}\!:\!\Pi_h(\mathcal{S}^{1,\textit{cr}}_D(\mathcal{T}_h))\!\to\! \mathbb{R}\cup\{+\infty\}}$ for every $\smash{\overline{v}_h\!\in\! \Pi_h(\mathcal{S}^{1,\textit{cr}}_D(\mathcal{T}_h))}$~is~defined~by
	\begin{align}
		\overline{I}_h^{\textit{cr}}(\overline{v}_h)\vcentcolon=\inf_{\substack{v_h\in \mathcal{S}^{1,\textit{cr}}_D(\mathcal{T}_h)\\ \Pi_hv_h=\overline{v}_h\textup{ in }\mathcal{L}^0(\mathcal{T}_h)}}{\int_{\Omega}{\phi(\nabla_hv_h)\,\textup{d}x}}+\int_{\Omega}{\psi_h(\cdot,\overline{v}_h)\,\textup{d}x},\label{equivalent_discrete_primal}
	\end{align}
	i.e., the minimization of \eqref{discrete_primal} is equivalently expressible through the minimization~of~\eqref{equivalent_discrete_primal}. 
	This  motivates to examine \eqref{equivalent_discrete_primal} for its (Fenchel) dual problem via the Lagrange functional ${\overline{L}_h\hspace*{-0.1em}:\hspace*{-0.1em}\Pi_h(\mathcal{S}^{1,\textit{cr}}_D(\mathcal{T}_h))\hspace*{-0.1em}\times\hspace*{-0.1em} \mathcal{R}T^0_N(\mathcal{T}_h)\hspace*{-0.1em}\to\hspace*{-0.1em} \mathbb{R}\hspace*{-0.1em}\cup\hspace*{-0.1em}\{\pm\infty\}}$, for every $\smash{(\overline{v}_h,y_h)^\top\!\in\! \Pi_h(\mathcal{S}^{1,\textit{cr}}_D(\mathcal{T}_h))\!\times\! \mathcal{R}T^0_N(\mathcal{T}_h)}$ defined by
	\begin{align}
		\overline{L}_h(\overline{v}_h,y_h)\vcentcolon=-\int_{\Omega}{\textup{div}(y_h)\,\overline{v}_h\,\textup{d}x\,}-\int_{\Omega}{\phi^*(\Pi_hy_h)\,\textup{d}x}+\int_{\Omega}{\psi_h(\cdot,\overline{v}_h)\,\textup{d}x}.\label{discrete_lagrange_functional}
	\end{align}
	On \hspace*{-0.15mm}the \hspace*{-0.15mm}basis \hspace*{-0.15mm}of \hspace*{-0.15mm}the \hspace*{-0.15mm}Lagrange \hspace*{-0.15mm}functional \hspace*{-0.15mm}\eqref{discrete_lagrange_functional}, \hspace*{-0.15mm}in \hspace*{-0.15mm}\cite{Bar21,BW21}, \hspace*{-0.15mm}it \hspace*{-0.15mm}has \hspace*{-0.15mm}been \hspace*{-0.15mm}established~\hspace*{-0.15mm}that~\hspace*{-0.15mm}a~\hspace*{-0.15mm}(Fenchel) dual problem to the minimization of \eqref{discrete_primal} and \eqref{equivalent_discrete_primal}, resp., 
	is given by the~\mbox{maximization}~of $D_h^{\textit{rt}}:\mathcal{R}T^0_N(\mathcal{T}_h)\to \mathbb{R}\cup\{-\infty\}$, for every $y_h\in\mathcal{R}T^0_N(\mathcal{T}_h)$~defined~by
	\begin{align}
		D_h^{\textit{rt}}(y_h)\vcentcolon=-\int_{\Omega}{\phi^*(\Pi_h y_h)\,\textup{d}x}-\int_{\Omega}{\psi_h^*(\cdot,\textup{div}(y_h))\,\textup{d}x}.\label{discrete_dual}
	\end{align} 
	Appealing \hspace*{-0.1mm}to \hspace*{-0.1mm}\cite[\hspace*{-0.1mm}Proposition \hspace*{-0.1mm}3.1]{Bar21}  \hspace*{-0.1mm}or \hspace*{-0.1mm}\cite[\hspace*{-0.1mm}Corollary \hspace*{-0.1mm}3.6]{BW21}, \hspace*{-0.1mm}the~\hspace*{-0.1mm}discrete~\hspace*{-0.1mm}weak~\hspace*{-0.1mm}duality~\hspace*{-0.1mm}relation
	\begin{align*}
		\begin{aligned}
		\inf_{v_h\in \mathcal{S}^{1,\textit{cr}}_D(\mathcal{T}_h)}{I_h^{\textit{cr}}(v_h)}&=\inf_{\overline{v}_h\in \Pi_h(\mathcal{S}^{1,\textit{cr}}_D(\mathcal{T}_h))}{\overline{I}_h^{\textit{cr}}(\overline{v}_h)}
		\\[-0.5mm]&\ge \inf_{\overline{v}_h\in \Pi_h(\mathcal{S}^{1,\textit{cr}}_D(\mathcal{T}_h))}{\sup_{y_h\in \mathcal{R}T^0_N(\mathcal{T}_h)}{\overline{L}_h(\overline{v}_h,y_h) }}
		\\[-0.5mm]&\ge \sup_{y_h\in \mathcal{R}T^0_N(\mathcal{T}_h)}{\inf_{\overline{v}_h\in \Pi_h(\mathcal{S}^{1,\textit{cr}}_D(\mathcal{T}_h))}{\overline{L}_h(\overline{v}_h,y_h) }}
		\\[-0.5mm]&
		\ge \sup_{y_h\in \mathcal{R}T^0_N(\mathcal{T}_h)}{D_h^{\textit{rt}}(y_h)}.
	\end{aligned}
	\end{align*}
	holds. \hspace*{-0.5mm}If, \hspace*{-0.15mm}in \hspace*{-0.15mm}addition, \hspace*{-0.15mm}$\phi\hspace*{-0.2em}\in \hspace*{-0.2em} C^1(\mathbb{R}^d)$ \hspace*{-0.15mm}and \hspace*{-0.15mm}$\psi_h(x,\hspace*{-0.15mm}\cdot\hspace*{-0.15mm})\hspace*{-0.2em}\in\hspace*{-0.2em} C^1(\mathbb{R})$ \hspace*{-0.15mm}for \hspace*{-0.15mm}almost \hspace*{-0.15mm}every \hspace*{-0.15mm}$x\hspace*{-0.2em}\in\hspace*{-0.2em} \Omega$,~\hspace*{-0.15mm}then~\hspace*{-0.15mm}a~\hspace*{-0.15mm}\mbox{minimizer} $u_h^{\textit{cr}}\in \mathcal{S}^{1,\textit{cr}}_D(\mathcal{T}_h)$ of \eqref{discrete_primal} and a maximizer $z_h^{\textit{rt}}\in \mathcal{R}T^0_N(\mathcal{T}_h)$ of \eqref{discrete_dual}, cf. \cite[Corollary~3.7]{BW21}, are related by
	\begin{align}
		\begin{aligned}
		\Pi_h z_h^{\textit{rt}}&=D\phi(\nabla_h u_h^{\textit{cr}})&&\quad\text{ in }\mathcal{L}^0(\mathcal{T}_h)^d,\\
		\textup{div}(z_h^{\textit{rt}})&=D\psi_h(\cdot,\Pi_hu_h^{\textit{cr}})&&\quad\text{ in }\mathcal{L}^0(\mathcal{T}_h).\end{aligned}\label{eq:discrete_optimality_relations}
	\end{align}
	Apart from that, note that by the Fenchel--Young identity (cf.~\eqref{eq:fenchel_young_id}),~\eqref{eq:discrete_optimality_relations}~is~equivalent~to
	\begin{align}
			\begin{aligned}
			\Pi_h z_h^{\textit{rt}}\cdot \nabla_h u_h^{\textit{cr}}&=\phi^*(\Pi_hz_h^{\textit{rt}})+\phi(\nabla_h u_h^{\textit{cr}})&&\quad\text{ a.e. in }\Omega,\\
				\textup{div}(z_h^{\textit{rt}})\,\Pi_hu_h^{\textit{cr}}& =\psi_h^*(\cdot,	\textup{div}(z_h^{\textit{rt}}))+\psi_h(\cdot,\Pi_hu_h^{\textit{cr}})&&\quad\text{ a.e. in }\Omega.\end{aligned}\label{eq:discrete_optimality_relations2}
	\end{align}
	Eventually, in this case, we have the discrete reconstruction formula 
	\begin{align}
		z_h^{\textit{rt}}= D\phi(\nabla_h u_h^{\textit{cr}})+D\psi_h(\cdot, \Pi_hu_h^{\textit{cr}})d^{-1}\big(\textup{id}_{\mathbb{R}^d}-\Pi_h\textup{id}_{\mathbb{R}^d}\big)\quad\text{ in }\mathcal{R}T^0_N(\mathcal{T}_h),\label{eq:reconstruction_formula}
	\end{align}
	and discrete strong duality relation applies, i.e.,
	\begin{align}
			I_h^{\textit{cr}}(u_h^{\textit{cr}})=D_h^{\textit{rt}}(z_h^{\textit{rt}}).\label{eq:discrete_strong_duality}
	\end{align}

	\qquad More generally, without additional regularity assumptions on $\phi:\mathbb{R}^d\to \mathbb{R}\cup\{+\infty\}$ and $\psi_h:\Omega\times \mathbb{R}\to \mathbb{R}\cup\{+\infty\}$,
	the following discrete convex optimality relations apply:~~~~~~~~~~~
	
	\begin{proposition}\label{prop:optimality}
		Assume that $u_h^{\textit{cr}}\in  \mathcal{S}^{1,\textit{cr}}_D(\mathcal{T}_h)$ and $z_h^{\textit{rt}}\in \mathcal{R}T^0_N(\mathcal{T}_h)$ satisfy
		\begin{align}\int_{\Omega}{\phi(\nabla_hu_h^{\textit{cr}})\,\textup{d}x}&= \sup_{y_h\in \mathcal{R}T^0_N(\mathcal{T}_h)}{\int_{\Omega}{\nabla_hu_h^{\textit{cr}}\cdot\Pi_hy_h\,\textup{d}x}-\int_{\Omega}{\phi^*(\Pi_hy_h)\,\textup{d}x}},\label{prop:optimality1}\\
				\int_{\Omega}{\psi_h^*(\cdot,\textup{div}(z_h^{\textit{rt}}))\,\textup{d}x}&= \sup_{v_h\in  \mathcal{S}^{1,\textit{cr}}_D(\mathcal{T}_h)}{\int_{\Omega}{\textup{div}(z_h^{\textit{rt}})\,\Pi_hv_h\,\textup{d}x}-\int_{\Omega}{\psi_h(\cdot,\Pi_hv_h)\,\textup{d}x}},\label{prop:optimality2}
		\end{align}
		hold. Then, the following statements are equivalent:
		\begin{description}[noitemsep,topsep=1pt,labelwidth=\widthof{\textit{(ii)}},leftmargin=!,font=\normalfont\itshape]
			\item[(i)] \eqref{eq:discrete_strong_duality} holds.
			\item[(ii)] $(\Pi_hu_h^{\textit{cr}},z_h^{\textit{rt}})^\top\in  \Pi_h(\mathcal{S}^{1,\textit{cr}}_D(\mathcal{T}_h))\times\mathcal{R}T^0_N(\mathcal{T}_h) $ is a saddle point of \eqref{discrete_lagrange_functional}.
		\end{description}
		Moreover, if either of the cases \textit{(i)} or \textit{(ii)} applies, then the  optimality relations  \eqref{eq:discrete_optimality_relations2} hold.
	\end{proposition}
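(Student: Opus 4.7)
The plan is to identify the value of the primal functional $I_h^{\textit{cr}}$ at $u_h^{\textit{cr}}$ with a partial supremum of the Lagrangian $\overline{L}_h$ and the value of the dual functional $D_h^{\textit{rt}}$ at $z_h^{\textit{rt}}$ with a partial infimum of $\overline{L}_h$, and then to use a standard min-max sandwich. First, I would apply the integration-by-parts formula \eqref{eq:pi0} to rewrite $\overline{L}_h(\Pi_h v_h,y_h)=\int_{\Omega}\nabla_h v_h\cdot\Pi_h y_h\,\textup{d}x-\int_{\Omega}\phi^*(\Pi_h y_h)\,\textup{d}x+\int_{\Omega}\psi_h(\cdot,\Pi_h v_h)\,\textup{d}x$ for all $v_h\in \mathcal{S}^{1,\textit{cr}}_D(\mathcal{T}_h)$ and $y_h\in \mathcal{R}T^0_N(\mathcal{T}_h)$. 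Taking the supremum over $y_h\in \mathcal{R}T^0_N(\mathcal{T}_h)$ at $v_h=u_h^{\textit{cr}}$ and invoking assumption \eqref{prop:optimality1} yields $\sup_{y_h}\overline{L}_h(\Pi_h u_h^{\textit{cr}},y_h)=I_h^{\textit{cr}}(u_h^{\textit{cr}})$. Analogously, taking the infimum over $\overline{v}_h\in \Pi_h(\mathcal{S}^{1,\textit{cr}}_D(\mathcal{T}_h))$ at $y_h=z_h^{\textit{rt}}$ and using assumption \eqref{prop:optimality2} yields $\inf_{\overline{v}_h}\overline{L}_h(\overline{v}_h,z_h^{\textit{rt}})=D_h^{\textit{rt}}(z_h^{\textit{rt}})$.

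With these two identities in hand, the equivalence of \textit{(i)} and \textit{(ii)} reduces to the weak max-min sandwich $D_h^{\textit{rt}}(z_h^{\textit{rt}})=\inf_{\overline{v}_h}\overline{L}_h(\overline{v}_h,z_h^{\textit{rt}})\leq \overline{L}_h(\Pi_h u_h^{\textit{cr}},z_h^{\textit{rt}})\leq \sup_{y_h}\overline{L}_h(\Pi_h u_h^{\textit{cr}},y_h)=I_h^{\textit{cr}}(u_h^{\textit{cr}})$. If \textit{(i)} holds, equality propagates through the entire chain and forces the saddle point property \textit{(ii)}; conversely, if $(\Pi_h u_h^{\textit{cr}},z_h^{\textit{rt}})^{\top}$ is a saddle point of $\overline{L}_h$, both outer extrema coincide with the common middle value $\overline{L}_h(\Pi_h u_h^{\textit{cr}},z_h^{\textit{rt}})$, yielding \textit{(i)}.

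For the optimality relations \eqref{eq:discrete_optimality_relations2}, I would apply the Fenchel--Young inequality pointwise to the pairs $(\phi,\phi^*)$ and $(\psi_h(x,\cdot),\psi_h^*(x,\cdot))$, obtaining two non-negative integrands. Integrating them over $\Omega$ and summing, the mixed terms combine via \eqref{eq:pi0} to $\int_{\Omega}\nabla_h u_h^{\textit{cr}}\cdot\Pi_h z_h^{\textit{rt}}\,\textup{d}x+\int_{\Omega}\textup{div}(z_h^{\textit{rt}})\,\Pi_h u_h^{\textit{cr}}\,\textup{d}x=0$, so what remains on the right-hand side is exactly $I_h^{\textit{cr}}(u_h^{\textit{cr}})-D_h^{\textit{rt}}(z_h^{\textit{rt}})$. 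Under \textit{(i)} (equivalently \textit{(ii)}) this sum vanishes, and since each of the two integrands is individually non-negative, each must vanish almost everywhere, which is precisely \eqref{eq:discrete_optimality_relations2}.

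No substantial obstacle is expected: the argument is a clean combination of Fenchel--Young, the discrete integration-by-parts \eqref{eq:pi0}, and the standard duality sandwich. The only subtle point is recognising that the class $\overline{v}_h\in \Pi_h(\mathcal{S}^{1,\textit{cr}}_D(\mathcal{T}_h))$ over which $\overline{L}_h(\cdot,z_h^{\textit{rt}})$ is infimised coincides, via $\overline{v}_h=\Pi_h v_h$ with $v_h\in \mathcal{S}^{1,\textit{cr}}_D(\mathcal{T}_h)$, with the class of competitors appearing in \eqref{prop:optimality2}, so that no further regularity of $\phi$ or $\psi_h$ beyond that assumed in Section~\ref{subsec:discrete_convex_min} is required.
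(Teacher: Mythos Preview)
Your proposal is correct and follows essentially the same approach as the paper: both establish the identities $I_h^{\textit{cr}}(u_h^{\textit{cr}})=\sup_{y_h}\overline{L}_h(\Pi_hu_h^{\textit{cr}},y_h)$ and $D_h^{\textit{rt}}(z_h^{\textit{rt}})=\inf_{\overline{v}_h}\overline{L}_h(\overline{v}_h,z_h^{\textit{rt}})$ from \eqref{prop:optimality1}, \eqref{prop:optimality2} and \eqref{eq:pi0}, and then use the min--max sandwich for the equivalence. The only cosmetic difference is in deriving \eqref{eq:discrete_optimality_relations2}: the paper extracts the two integral identities separately from the saddle-point subdifferential conditions $0\in\partial_i\overline{L}_h$ before passing to pointwise equality via the Fenchel--Young inequality, whereas you sum the two non-negative Fenchel--Young defects directly and observe that their total equals the primal--dual gap; both routes are equivalent and equally short.
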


	\begin{proof}
		Using the assumptions \eqref{prop:optimality1} and \eqref{prop:optimality2}, and the integration-by-parts formula~\eqref{eq:pi0}, a direct calculation shows that
		\begin{align}
			\begin{aligned}
				I_h^{\textit{cr}}(u_h^{\textit{cr}})=\sup_{y_h\in \mathcal{R}T^0_N(\mathcal{T}_h)}{\overline{L}_h(\Pi_hu_h^{\textit{cr}},y_h) },\qquad
					D_h^{\textit{rt}}(z_h^{\textit{rt}})=\inf_{v_h\in \mathcal{S}^{1,\textit{cr}}_D(\mathcal{T}_h)}{\overline{L}_h(\Pi_hv_h,z_h^{\textit{rt}}) }.
			\end{aligned}\label{eq:optimality1}
		\end{align}
	
		\textit{ad (i) $\Rightarrow$ (ii).} Combining \eqref{eq:optimality1} and  \eqref{eq:discrete_strong_duality},~we~find~that
		\begin{align}\begin{aligned}
				I_h^{\textit{cr}}(u_h^{\textit{cr}})&=\sup_{y_h\in \mathcal{R}T^0_N(\mathcal{T}_h)}{\overline{L}_h(\Pi_hu_h^{\textit{cr}},y_h) }\ge \overline{L}_h(\Pi_hu_h^{\textit{cr}},z_h^{\textit{rt}})\\&\ge \inf_{v_h\in \mathcal{S}^{1,\textit{cr}}_D(\mathcal{T}_h)}{\overline{L}_h(\Pi_hv_h,z_h^{\textit{rt}}) }=D_h^{\textit{rt}}(z_h^{\textit{rt}})=	I_h^{\textit{cr}}(u_h^{\textit{cr}}).
			\end{aligned}\label{eq:optimality2}
		\end{align}
		As \hspace*{-0.1mm}a \hspace*{-0.1mm}result \hspace*{-0.1mm}of \hspace*{-0.1mm}\eqref{eq:optimality2}, \hspace*{-0.1mm}$(\Pi_hu_h^{\textit{cr}},z_h^{\textit{rt}})^\top\!\in\!  \Pi_h(\mathcal{S}^{1,\textit{cr}}_D(\mathcal{T}_h))\times\mathcal{R}T^0_N(\mathcal{T}_h) $ \hspace*{-0.1mm}is \hspace*{-0.1mm}a \hspace*{-0.1mm}saddle \hspace*{-0.1mm}point~\hspace*{-0.1mm}of~\hspace*{-0.1mm}\eqref{discrete_lagrange_functional},~\hspace*{-0.1mm}i.e., 
		\begin{align}
			\max_{y_h\in \mathcal{R}T^0_N(\mathcal{T}_h)}{\overline{L}_h(\Pi_hu_h^{\textit{cr}},y_h) }=\overline{L}_h(\Pi_hu_h^{\textit{cr}},z_h^{\textit{rt}})= \min_{v_h\in \mathcal{S}^{1,\textit{cr}}_D(\mathcal{T}_h)}{\overline{L}_h(\Pi_hv_h,z_h^{\textit{rt}}) }.\label{eq:optimality3}
		\end{align}
		
		\textit{ad (ii) $\Rightarrow$ (i).} If $(\Pi_hu_h^{\textit{cr}},z_h^{\textit{rt}})^\top\!\in  \Pi_h(\mathcal{S}^{1,\textit{cr}}_D(\mathcal{T}_h))\times\mathcal{R}T^0_N(\mathcal{T}_h) $ is a saddle point~of~\eqref{discrete_lagrange_functional},~i.e., \eqref{eq:optimality3} applies, then the infimum and supremum in \eqref{eq:optimality1} become a minimum and maximum, resp., so that from  \eqref{eq:optimality3}, it immediately follows that \eqref{eq:discrete_strong_duality} applies.
		
		\textit{Optimality relations.}
		From  \eqref{eq:optimality3} we deduce that $0\in (\partial_1 \overline{L}_h)(\Pi_hu_h^{\textit{cr}},z_h^{\textit{rt}})$, where the~sub-differential $\partial_1$ is taken in  $\Pi_h(\mathcal{S}^{1,\textit{cr}}_D(\mathcal{T}_h))$ equipped with $(\cdot,\cdot)_{L^2(\Omega)}$, and ${0\hspace*{-0.15em}\in\hspace*{-0.15em} (\partial_2 \overline{L}_h)(\Pi_hu_h^{\textit{cr}},z_h^{\textit{rt}})}$, 
		where the sub-differential $\partial_2$ is taken in $\mathcal{R}T^0_N(\mathcal{T}_h)$ equipped with $(\cdot,\cdot)_{L^2(\Omega;\mathbb{R}^d)}$.~Then,~by~the Fenchel--Young identity (cf.~\eqref{eq:fenchel_young_id}), $0\in \smash{(\partial_1 \overline{L}_h)(\Pi_hu_h^{\textit{cr}},z_h^{\textit{rt}})}$ is equivalent to
		\begin{align}
				\int_{\Omega}{\Pi_hz_h^{\textit{rt}}\cdot\nabla_hu_h^{\textit{cr}} \,\textup{d}x}=\int_{\Omega}{\phi^*(\Pi_hz_h^{\textit{rt}})+\phi(\nabla_hu_h^{\textit{cr}})  \,\textup{d}x },\label{eq:optimality4}
		\end{align}
		while $0\in (\partial_2 \overline{L}_h)(\Pi_hu_h^{\textit{cr}},z_h^{\textit{rt}})$ is equivalent to 
		 	\begin{align}
		 	\int_{\Omega}{\textup{div}(z_h^{\textit{rt}})\,\Pi_hu_h^{\textit{cr}}\,\textup{d}x}=	\int_{\Omega}{\psi^*_h(\cdot,\textup{div}(z_h^{\textit{rt}})) +\psi_h(\cdot,\Pi_hu_h^{\textit{cr}}) \,\textup{d}x }.\label{eq:optimality5}
		 \end{align}
	 	Eventually, since, by the Fenchel--Young inequality (cf.~\eqref{eq:fenchel_young_ineq}), we have that $\Pi_hz_h^{\textit{rt}}\cdot\nabla_hu_h^{\textit{cr}}\leq\phi^*(\Pi_hz_h^{\textit{rt}})+\phi(\nabla_hu_h^{\textit{cr}})$ and $\textup{div}(z_h^{\textit{rt}})\Pi_hu_h^{\textit{cr}}\leq \psi^*_h(\cdot,\textup{div}(z_h^{\textit{rt}}))+\psi_h(\cdot,\Pi_hu_h^{\textit{cr}})$ almost everywhere in $\Omega$, from \eqref{eq:optimality4} and \eqref{eq:optimality5}, we conclude that \eqref{eq:discrete_optimality_relations2} hold.
	\end{proof}

	\begin{remark}[Sufficient conditions for  \eqref{prop:optimality1} and \eqref{prop:optimality2}]\label{rem:sufficient_conditions}
		\begin{itemize}[noitemsep,topsep=2pt,leftmargin=!,labelwidth=\widthof{\textit{(ii)}},font=\itshape]
		\item[(i)] For \eqref{prop:optimality1} it is, e.g., sufficient that $\phi\in C^1(\mathbb{R}^d)$ (cf. \cite[Proposition 2.2]{Bar21}) or that there exist regularizations  $(\phi_\varepsilon)_{\varepsilon>0}\subseteq C^1(\mathbb{R}^d)$ such that $\phi_\varepsilon\to \phi$ and $\phi_\varepsilon^*\to \phi^*$ locally uniformly on their domains (cf. \cite[Remark 2.3]{Bar21}), e.g.,
		for $\phi\vcentcolon=\vert \cdot\vert\in C^0(\mathbb{R}^d)$, one can employ $(\phi_\varepsilon)_{\varepsilon>0}\subseteq C^1(\mathbb{R}^d)$, defined by $\phi_\varepsilon(t)\vcentcolon=\min\{\vert t\vert+\frac{\varepsilon}{2},\smash{\frac{\vert t\vert^2}{2\varepsilon}}\}$ for every $t\in \mathbb{R}^d$ and $\varepsilon>0$, for which~we~have~that $\phi_\varepsilon^*(t)=\varepsilon\smash{\frac{t^2}{2}}$ if $\vert t\vert \leq 1$ and $\phi_\varepsilon^*(t)=\infty$ else for every $\varepsilon>0$.
		
		\item[(ii)] For \eqref{prop:optimality2} it is, e.g., sufficient that $\psi_h(x,\cdot)\in C^1(\mathbb{R})$ for almost every $x\in \Omega$
		 or that $\mathcal{L}^0(\mathcal{T}_h)=\Pi_h(\mathcal{S}^{1,\textit{cr}}_D(\mathcal{T}_h))$, e.g., if $\Gamma_D\neq \partial\Omega$ (cf. \cite[Corollary 3.2]{BW21}).
		\end{itemize}
	\end{remark}

	\section{General a posteriori error estimation}\label{sec:general}\vspace*{-0.5mm}
	
	\qquad In this section, we derive general a posteriori error~estimates~for~\mbox{convex},~possibly~non-differentiable, minimization problems such as in Subsection \ref{subsec:convex_min}.~These~\mbox{error} estimates are well-known in the literature, cf. \cite{RX97,Rep99,Rep20C,Han05,Rep08,Bar15,BM20,Bar21}, and form the fundament for any a posteriori error analysis on the basis of convex duality relations. As~its~proof~is~propor-tionate simple, 
	for the benefit of the reader, we want to briefly reproduce~it~here.~Moreover, similar to \cite{HPSV21}, we want to point out potential error sources that may need~to~be~taken~into account and discuss various practical aspects of the concept.
	
	\begin{proposition}\label{prop:general_pd_estimator0}
		Let $p\hspace*{-0.1em}\in\hspace*{-0.1em} [1,\infty)$ and $\psi(x,t)\hspace*{-0.1em}=\hspace*{-0.1em}\psi_h(x,t)$ for almost every $x\hspace*{-0.1em}\in\hspace*{-0.1em} \Omega$~and~all~${t\hspace*{-0.1em}\in\hspace*{-0.1em} \mathbb{R}}$. 
		 Then, for every $\tilde{u}_h\in \smash{W^{1,p}_D(\Omega)}$ and $\tilde{z}_h\in \smash{W^{p'}_N(\textup{div};\Omega)}$, we have that\vspace*{-0.5mm}
		\begin{align}
			\begin{aligned}
				\rho^2_I(\tilde{u}_h,u)&\leq\int_{\Omega}{\phi(\nabla \tilde{u}_h)- \tilde{z}_h\cdot\nabla \tilde{u}_h+\phi^*(\tilde{z}_h)\,\textup{d}x}\\&\quad+
				\int_{\Omega}{\psi_h(\cdot,\tilde{u}_h)- \textup{div}(\tilde{z}_h)\, \tilde{u}_h+\psi_h^*(\cdot,\textup{div}(\tilde{z}_h))\,\textup{d}x}=\vcentcolon\eta^2_h(\tilde{u}_h,\tilde{z}_h).\end{aligned}\label{eq:general_pd_estimator0}
		\end{align}
	\end{proposition}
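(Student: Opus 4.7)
The proof is a short chain of three ingredients: the co-coercivity bound, weak duality, and an integration-by-parts identity compatible with the boundary conditions encoded in $W^{1,p}_D(\Omega)$ and $W^{p'}_N(\textup{div};\Omega)$. The plan is to start from inequality \eqref{strong_convexity}, which gives
\begin{align*}
\rho_I^2(\tilde{u}_h,u)\leq I(\tilde{u}_h)-I(u),
\end{align*}
and then bound $-I(u)$ from above by $-D(\tilde{z}_h)$ using the weak duality relation \eqref{weak_duality}. This reduces the proposition to a purely algebraic manipulation of the primal-dual gap $I(\tilde{u}_h)-D(\tilde{z}_h)$.

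The next step is to expand $D(\tilde z_h)$ using the explicit representation available for $\tilde z_h\in W^{p'}_N(\textup{div};\Omega)$, namely $D(\tilde z_h)=-\int_\Omega\phi^*(\tilde z_h)\,\textup{d}x-\int_\Omega\psi_h^*(\cdot,\textup{div}(\tilde z_h))\,\textup{d}x$, where we have used the hypothesis $\psi=\psi_h$ so that the conjugates agree. Collecting terms yields
\begin{align*}
I(\tilde u_h)-D(\tilde z_h)
=\int_\Omega \phi(\nabla\tilde u_h)+\phi^*(\tilde z_h)\,\textup{d}x
+\int_\Omega\psi_h(\cdot,\tilde u_h)+\psi_h^*(\cdot,\textup{div}(\tilde z_h))\,\textup{d}x.
\end{align*}

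What is then missing to arrive at $\eta_h^2(\tilde u_h,\tilde z_h)$ is precisely the term $-\int_\Omega \tilde z_h\cdot\nabla\tilde u_h\,\textup{d}x-\int_\Omega \textup{div}(\tilde z_h)\,\tilde u_h\,\textup{d}x$. I would add and subtract these two integrals and invoke the standard integration-by-parts formula on $\Omega$: since $\tilde u_h\in W^{1,p}_D(\Omega)$ (so its trace vanishes on $\Gamma_D$) and $\tilde z_h\in W^{p'}_N(\textup{div};\Omega)$ (so its normal trace vanishes on $\Gamma_N$), the boundary pairing splits as a duality on $\Gamma_D\cup\Gamma_N=\partial\Omega$ whose two contributions vanish separately. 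Hence
\begin{align*}
\int_\Omega \tilde z_h\cdot\nabla\tilde u_h\,\textup{d}x+\int_\Omega \textup{div}(\tilde z_h)\,\tilde u_h\,\textup{d}x=0,
\end{align*}
and inserting this identity into the expression above produces exactly $\eta_h^2(\tilde u_h,\tilde z_h)$, finishing the argument.

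The only genuine subtlety is the last integration-by-parts step: the pairing of $\textup{div}(\tilde z_h)$ with $\tilde u_h$ must be justified at the level of the trace spaces, using that $\tilde z_h\cdot n\in W^{-1/p',p'}(\partial\Omega)$ pairs with $\textup{tr}(\tilde u_h)\in W^{1-1/p,p}(\partial\Omega)$, and that this boundary term vanishes because $\textup{tr}(\tilde u_h)=0$ on $\Gamma_D$ and $\tilde z_h\cdot n=0$ on $\Gamma_N$. Everything else is bookkeeping, and no Fenchel--Young nonnegativity of the integrands is needed for the bound itself; the pointwise nonnegativity of the integrands in $\eta_h^2$ is a separate (standard) observation that matters only for the later structural discussion.
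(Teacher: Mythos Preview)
Your proof is correct and follows essentially the same approach as the paper: co-coercivity \eqref{strong_convexity}, then weak duality \eqref{weak_duality}, then integration-by-parts to insert the two cross terms. You actually give more detail than the paper on why the boundary contribution in the integration-by-parts vanishes; the paper simply cites an integration-by-parts formula and moves on.
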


		\begin{proof}
	\hspace*{-1mm}Let  \hspace*{-0.15mm}$\tilde{u}_h\hspace*{-0.2em}\in\hspace*{-0.2em} \smash{W^{1,p}_D\hspace*{-0.05em}(\Omega)}$  \hspace*{-0.15mm}and  \hspace*{-0.15mm}$\tilde{z}_h\hspace*{-0.2em}\in\hspace*{-0.2em} \smash{W^{p'}_N\hspace*{-0.05em}(\textup{div};\hspace*{-0.05em}\Omega)}$  \hspace*{-0.15mm}be  \hspace*{-0.15mm}fixed,  \hspace*{-0.15mm}but  \hspace*{-0.15mm}arbitrary.~\hspace*{-0.15mm}Then,~\hspace*{-0.15mm}\mbox{referring}~\hspace*{-0.15mm}to~\hspace*{-0.15mm}the~\hspace*{-0.15mm}co-
	coercivity of $I:W^{1,p}_D(\Omega)\to \mathbb{R}\cup \{+\infty\}$ (cf. \eqref{strong_convexity}) and~the~weak~\mbox{duality}~\mbox{principle}~(cf.~\eqref{weak_duality}),  we find that\vspace*{-0.5mm} 
		\begin{align*}
				\rho^2_I(\tilde{u}_h,u)&\leq I(\tilde{u}_h)-I(u)\leq I(\tilde{u}_h)-D(\tilde{z}_h)\\
				&=\int_{\Omega}{\phi(\nabla \tilde{u}_h)\,\textup{d}x}+\int_{\Omega}{\psi_h(\cdot,\tilde{u}_h)\,\textup{d}x}+
				\int_{\Omega}{\phi^*(\tilde{z}_h)\,\textup{d}x}+\int_{\Omega}{\psi_h^*(\cdot,\textup{div}(\tilde{z}_h))\,\textup{d}x}.
		\end{align*}
		Eventually, using the integration-by-parts formula \eqref{eq:pi0}, we conclude the assertion.
	\end{proof}

	\begin{remark}\label{rem:general_pd_estimator0}
		\begin{itemize}[noitemsep,topsep=2pt,leftmargin=!,labelwidth=\widthof{\textit{(iii)}},font=\itshape]
			\item[(i)]  \hspace*{-0.1mm}The \hspace*{-0.1mm}primal-dual \hspace*{-0.1mm}a \hspace*{-0.1mm}posteriori \hspace*{-0.1mm}error \hspace*{-0.1mm}estimator \hspace*{-0.1mm}$ \eta^2_h(\tilde{u}_h,\tilde{z}_h)$~\hspace*{-0.1mm}for~\hspace*{-0.1mm}${\tilde{u}_h\!\in\! \smash{W^{1,p}_D(\Omega)}}$ and $\tilde{z}_h\!\in\! \smash{W^{p'}_N(\textup{div};\Omega)}$ yields a reliable upper bound for the approximation~error~$\rho^2_I(\tilde{u}_h,u)$, i.e., \hspace*{-0.1mm}we \hspace*{-0.1mm}do \hspace*{-0.1mm}not \hspace*{-0.1mm}have \hspace*{-0.1mm}to \hspace*{-0.1mm}compute \hspace*{-0.1mm}any \hspace*{-0.1mm}exact/discrete \hspace*{-0.1mm}solution \hspace*{-0.1mm}of \hspace*{-0.1mm}the \hspace*{-0.1mm}primal \hspace*{-0.1mm}or~\hspace*{-0.1mm}dual~\hspace*{-0.1mm}\mbox{problem}, resp. However, note that for non-admissible $\tilde{u}_h\in \smash{W^{1,p}_D(\Omega)}$~and/or~${\tilde{z}_h\in \smash{W^{p'}_N(\textup{div};\Omega)}}$, the critical case $ \eta^2_h(\tilde{u}_h,\tilde{z}_h)=+\infty$ might occur.
			
			\item[(ii)] The \hspace*{-0.15mm}a \hspace*{-0.15mm}posteriori \hspace*{-0.15mm}error \hspace*{-0.15mm}estimate \hspace*{-0.1mm}\eqref{eq:general_pd_estimator0} \hspace*{-0.15mm}is \hspace*{-0.15mm}entirely \hspace*{-0.15mm}constant-free, \hspace*{-0.5mm}making~\hspace*{-0.15mm}the~\hspace*{-0.15mm}error~\hspace*{-0.15mm}\mbox{estimator}  $ \eta^2_h\!:\!\smash{W^{1,p}_D(\Omega)}\!\times\!\smash{W^{p'}_N(\textup{div};\Omega)}\!\to\!\mathbb{R}\!\cup\!\{+\infty\}$ \hspace*{-0.5mm}attractive \hspace*{-0.15mm}compared \hspace*{-0.15mm}to \hspace*{-0.15mm}classical~\hspace*{-0.15mm}residual~\hspace*{-0.15mm}type~\hspace*{-0.15mm}error estimators, \hspace*{-0.1mm}which \hspace*{-0.1mm}usually \hspace*{-0.1mm}depend \hspace*{-0.1mm}on \hspace*{-0.1mm}constants \hspace*{-0.1mm}that \hspace*{-0.1mm}are \hspace*{-0.1mm}difficult~\hspace*{-0.1mm}to~\hspace*{-0.1mm}bound~\hspace*{-0.1mm}\mbox{accurately}.
			
			\item[(iii)] The (local) refinement indicators $(\eta_{h,T}^2(\tilde{u}_h,\tilde{z}_h))_{T\in \mathcal{T}_h}$, for every ${T\in \mathcal{T}_h}$~defined~by\vspace*{-0.5mm}
			\begin{align}
				\begin{aligned}
					\eta_{h,T}^2(\tilde{u}_h,\tilde{z}_h)&\vcentcolon=\int_T{\phi(\nabla \tilde{u}_h)- \tilde{z}_h\cdot\nabla \tilde{u}_h+\phi^*(\tilde{z}_h)\,\textup{d}x}
					\\&\quad+
					\int_T{\psi_h(\cdot,\tilde{u}_h)- \textup{div}(\tilde{z}_h)\, \tilde{u}_h+\psi_h^*(\cdot,\textup{div}(\tilde{z}_h))\,\textup{d}x},
				\end{aligned}\label{eq:def_eta}
			\end{align}
			are non-negative by the Fenchel--Young inequality (cf. \eqref{eq:fenchel_young_ineq}).
				\item[(iv)] If \hspace*{-0.15mm}we \hspace*{-0.15mm}choose \hspace*{-0.15mm}$\tilde{u}_h\hspace*{-0.2em}\in\hspace*{-0.2em} \mathcal{S}^1_D(\mathcal{T}_h)\hspace*{-0.2em}\subseteq\hspace*{-0.2em}\smash{W^{1,p}_D(\Omega)} $ \hspace*{-0.15mm}and \hspace*{-0.15mm}$\tilde{z}_h\hspace*{-0.2em}\in\hspace*{-0.2em}\mathcal{R}T^0_N(\mathcal{T}_h)\hspace*{-0.2em}\subseteq \hspace*{-0.2em}\smash{W^{p'}_N(\textup{div};\Omega)}$~\hspace*{-0.15mm}in~\hspace*{-0.15mm}\mbox{Proposition}~\hspace*{-0.15mm}\ref{eq:general_pd_estimator0}, then, exploiting that $\nabla \tilde{u}_h\in \mathcal{L}^0(\mathcal{T}_h)^d$ and $\textup{div}(\tilde{z}_h)\in \mathcal{L}^0(\mathcal{T}_h)$, for~every~${T\in \mathcal{T}_h}$,~it~holds\vspace*{-0.5mm} 
				\begin{align}
					\begin{aligned}
					\eta^2_{h,T}(\tilde{u}_h,\tilde{z}_h)&=\int_T{\phi(\nabla \tilde{u}_h)- \Pi_h\tilde{z}_h\cdot\nabla \tilde{u}_h+\phi^*(\Pi_h\tilde{z}_h)\,\textup{d}x}
					\\&\quad+
					\int_T{\psi_h(\cdot,\Pi_h\tilde{u}_h)- \textup{div}(\tilde{z}_h)\,\Pi_h \tilde{u}_h+\psi_h^*(\cdot,\textup{div}(\tilde{z}_h))\,\textup{d}x}\\&\quad+
					\int_T{\psi_h(\cdot,\tilde{u}_h)-\psi_h(\cdot,\Pi_h\tilde{u}_h) \,\textup{d}x}+	\int_T{\phi^*(\tilde{z}_h)-\phi^*(\Pi_h\tilde{z}_h)\,\textup{d}x}\\
					&=\vcentcolon\eta^2_{A,T}(\tilde{u}_h,\tilde{z}_h)+\eta^2_{B,T}(\tilde{u}_h,\tilde{z}_h)+\eta^2_{C,T}(\tilde{u}_h)+\eta^2_{D,T}(\tilde{z}_h).
					\end{aligned}\label{eq:general_pd_estimator1}
				\end{align}
				Then, $\smash{\eta^2_{A,h}(\tilde{u}_h,\tilde{z}_h)}$, $\smash{\eta^2_{B,h}(\tilde{u}_h,\tilde{z}_h)}$, $\smash{\eta^2_{C,h}(\tilde{u}_h)}$ and $\smash{\eta^2_{D,h}(\tilde{z}_h)}$ are defined by summation of the corresponding element-wise quantities.\vspace*{-5mm}\newpage
			\begin{itemize}[noitemsep,topsep=2pt,leftmargin=!,labelwidth=\widthof{\textit{(iii.a)}},font=\itshape]
				\item[(iv.a)] The representation \eqref{eq:general_pd_estimator1} for  $\tilde{u}_h\in \mathcal{S}^1_D(\mathcal{T}_h)$ and $\tilde{z}_h\in \mathcal{R}T^0_N(\mathcal{T}_h)$ has~the~\mbox{particular} advantage \hspace*{-0.1mm}that \hspace*{-0.1mm}the \hspace*{-0.1mm}integrands \hspace*{-0.1mm}of \hspace*{-0.1mm}the \hspace*{-0.1mm}first \hspace*{-0.1mm}two \hspace*{-0.1mm}integrals \hspace*{-0.1mm}on \hspace*{-0.1mm}the \hspace*{-0.1mm}right-hand~\hspace*{-0.1mm}side,~\hspace*{-0.1mm}i.e., \hspace*{-0.1mm}of \hspace*{-0.1mm}$\eta^2_{A,h}(\tilde{u}_h,\tilde{z}_h)$ \hspace*{-0.1mm}and \hspace*{-0.1mm}$\eta^2_{B,h}(\tilde{u}_h,\tilde{z}_h)$, \hspace*{-0.1mm}are \hspace*{-0.1mm}element-wise~\hspace*{-0.1mm}constant,~\hspace*{-0.1mm}i.e.,~\hspace*{-0.1mm}we~\hspace*{-0.1mm}have~\hspace*{-0.1mm}that\vspace*{-0.5mm}
				\begin{align*}
					\phi(\nabla \tilde{u}_h)- \Pi_h\tilde{z}_h\cdot\nabla \tilde{u}_h+\phi^*(\Pi_h\tilde{z}_h)&\in\mathcal{L}^0(\mathcal{T}_h),\\
					\psi_h(\cdot,\Pi_h\tilde{u}_h)- \textup{div}(\tilde{z}_h)\,\Pi_h \tilde{u}_h+\psi_h^*(\cdot,\textup{div}(\tilde{z}_h))&\in\mathcal{L}^0(\mathcal{T}_h),
				\end{align*}
				which  settles the question of a suitable choice of  a quadrature  for these~\mbox{integrals}, i.e., they do not produce any further quadrature errors.
				
				\item[(iv.b)] Noting that both $ {(x \mapsto   \psi_h(x,\tilde{u}_h(x))), (x \mapsto  \phi^*(\tilde{z}_h(x))) : T \to  \mathbb{R} \cup \{+\infty\}}$~\mbox{define} convex functions for all $T\hspace*{-0.05em}\in\hspace*{-0.05em}  \mathcal{T}_h$,  $\tilde{u}_h\hspace*{-0.05em}\in\hspace*{-0.05em} \mathcal{S}^1_D(\mathcal{T}_h)$ and ${\tilde{z}_h\hspace*{-0.05em}\in\hspace*{-0.05em} \mathcal{R}T^0_N(\mathcal{T}_h)}$,~where~we~used that $\psi_h(\cdot,t)\!\in\! \mathcal{L}^0(\mathcal{T}_h) $ for all $t\in \mathbb{R}$,  similar to \cite[Remark 4.8]{BM20}, we propose a trapezoidal quadrature that leads to a reliable upper bound,~i.e.,~for~every~${T\!\in\! \mathcal{T}_h}$, we have that\vspace*{-0.5mm}
				\begin{align}
					\begin{aligned}
					\eta^2_{C,T}(\tilde{u}_h)&\leq \widehat{\eta}^2_{C,T}(\tilde{u}_h)\vcentcolon=\int_T{\widehat{I}_h\big[\psi_h(\cdot,\tilde{u}_h)\big]-\psi_h(\cdot,\Pi_h\tilde{u}_h) \,\textup{d}x},\\
					\eta^2_{D,T}(\tilde{z}_h)&\leq \widehat{\eta}^2_{D,T}(\tilde{z}_h)\vcentcolon=\int_T{\widehat{I}_h\big[\phi^*(\tilde{z}_h)\big]-\phi^*(\Pi_h\tilde{z}_h)\,\textup{d}x},
				\end{aligned}\label{trapez}
				\end{align}
				where we denote by $\smash{\widehat{I}_h}\hspace*{-0.1em}:\hspace*{-0.1em}C^0(\mathcal{T}_h)\hspace*{-0.1em}\to\hspace*{-0.1em} \mathcal{L}^1(\mathcal{T}_h)$\footnote{Here, $C^0(\mathcal{T}_h)\vcentcolon=\{v_h\in L^\infty(\Omega)\mid v_h|_T\in C^0(T)\text{ for all }T\in \mathcal{T}_h\}$.} the element-wise~nodal~\mbox{interpolation} operator, for every $v_h\in C^0(\mathcal{T}_h)$ defined by $\smash{\widehat{I}_h}v_h|_T\vcentcolon=\sum_{z\in \mathcal{N}_h\cap T}{(v_h|_T)(z)\varphi_z}$~for all \hspace*{-0.1mm}$T\!\in\! \mathcal{T}_h$. \hspace*{-0.5mm}More \hspace*{-0.1mm}precisely, \hspace*{-0.1mm}we \hspace*{-0.1mm}propose \hspace*{-0.1mm}the \hspace*{-0.1mm}trapezoidal \hspace*{-0.1mm}a \hspace*{-0.1mm}posteriori~\hspace*{-0.1mm}error~\hspace*{-0.1mm}\mbox{estimator} $ \widehat{\eta}^2_h:\smash{\mathcal{S}^1_D(\mathcal{T}_h)}\times\smash{\mathcal{R}T^0_N(\mathcal{T}_h)}\to\mathbb{R}\cup\{+\infty\}$, for every  $\tilde{u}_h\hspace*{-0.05em}\in\hspace*{-0.05em} \mathcal{S}^1_D(\mathcal{T}_h)$~and~${\tilde{z}_h\hspace*{-0.05em}\in\hspace*{-0.05em} \mathcal{R}T^0_N(\mathcal{T}_h)}$ defined by\vspace*{-0.5mm}
				\begin{align}
					\widehat{\eta}^2_h(\tilde{u}_h,\tilde{z}_h)&\vcentcolon=\eta^2_{A,h}(\tilde{u}_h,\tilde{z}_h)+\eta^2_{B,h}(\tilde{u}_h,\tilde{z}_h)+\widehat{\eta}^2_{C,h}(\tilde{u}_h)+\widehat{\eta}^2_{D,h}(\tilde{z}_h),\label{eq:hat1}
				\end{align}
				i.e., $\widehat{\eta}^2_h(\tilde{u}_h,\tilde{z}_h)=\sum_{T\in \mathcal{T}_h}{	\widehat{\eta}^2_{h,T}(\tilde{u}_h,\tilde{z}_h)}$, where for every $T\in \mathcal{T}_h$
				\begin{align}
					\widehat{\eta}^2_{h,T}(\tilde{u}_h,\tilde{z}_h)\vcentcolon=\eta^2_{A,T}(\tilde{u}_h,\tilde{z}_h)+\eta^2_{B,T}(\tilde{u}_h,\tilde{z}_h)+\widehat{\eta}^2_{C,T}(\tilde{u}_h)+\widehat{\eta}^2_{D,T}(\tilde{z}_h).\label{eq:hat2}
				\end{align}
				Then, for every $\tilde{u}_h\in\smash{\mathcal{S}^1_D(\mathcal{T}_h)}$ and  ${\tilde{z}_h\in \smash{\mathcal{R}T^0_N(\mathcal{T}_h)}}$, appealing~to~\eqref{trapez}~and~(iii), we have that
				\begin{align}
					\widehat{\eta}^2_{h,T}(\tilde{u}_h,\tilde{z}_h)\ge	\eta^2_{h,T}(\tilde{u}_h,\tilde{z}_h)\ge 0\quad\text{ for all }T\in \mathcal{T}_h.\label{eq:hat3}
				\end{align}
			\end{itemize}
			\item[(v)] The assumption that $\psi(x,t) =\psi_h(x,t)$ for almost every $x\in \Omega$ and all $t\in \mathbb{R}$ can be avoided by considering $I_h:W^{1,p}_D(\Omega)\to \mathbb{R}\cup\{+\infty\}$, for every $v\in W^{1,p}_D(\Omega)$ defined by\vspace*{-0.5mm}
			\begin{align*}
				I_h(v)\vcentcolon=\int_{\Omega}{\phi(\nabla v)\,\textup{d}x}+\int_{\Omega}{\psi_h(\cdot,v)\,\textup{d}x},
			\end{align*}
			and noting that for every $v\in \smash{W^{1,p}_D(\Omega)}$, it holds $	I(v)-I_h(v)=\int_{\Omega}{\psi(\cdot,v)-\psi_h(\cdot,v) \,\textup{d}x}$.
			In combination with a priori bounds for a minimizing function $u\in  \smash{W^{1,p}_D(\Omega)}$~of~the original functional $I\!:\!W^{1,p}_D(\Omega)\!\to\! \mathbb{R}\cup\{+\infty\}$, this approximation error leads~to~a~\mbox{computable} bound that can be included in the error analysis:
			\begin{itemize}[noitemsep,topsep=1pt,leftmargin=!,labelwidth=\widthof{\textit{(v.a)}},font=\itshape]
					\item[(v.a)] If $\psi(x,t)\hspace*{-0.1em} \vcentcolon=\hspace*{-0.1em} -f(x)t$ and $\psi_h(x,t)\hspace*{-0.1em} \vcentcolon=\hspace*{-0.1em} -f_h(x)t$ for almost every $x\hspace*{-0.1em}\in\hspace*{-0.1em} \Omega$~and~all~${t\hspace*{-0.1em}\in\hspace*{-0.1em}\mathbb{R}}$, where $f\in L^{p'}(\Omega)$, $p\in [1,\infty]$, and $f_h\vcentcolon=\Pi_hf\in \mathcal{L}^0(\mathcal{T}_h)$, with~an~element-wise application of  Poincar\'e's inequality, we find that\vspace*{-0.5mm}
					\begin{align*}
						\smash{I(u)-I_h(u)\leq c_{\textup{P}}\|h_{\mathcal{T}}(f-f_h)\|_{L^{p'}(\Omega)}\|\nabla u\|_{L^p(\Omega;\mathbb{R}^d)}.}
					\end{align*}
					\item[(v.b)]  If $\psi(x,t)\vcentcolon= \smash{\frac{\alpha}{2}(t-g(x))^2}$ and $\psi_h(x,t) \vcentcolon= \smash{\frac{\alpha}{2}(t-g_h(x))^2}$ for almost every $x\in \Omega$ and all $t\in\mathbb{R}$, where $g\in L^\infty(\Omega)$ and $g_h\vcentcolon=\Pi_hg\in \mathcal{L}^0(\mathcal{T}_h)$, then we find  that\vspace*{-0.5mm}
					\begin{align*}
						\smash{I(u)-I_h(u)\leq   \alpha\big( \|g-g_h\|_{L^2(\Omega)}^2+ \|u-\Pi_hu\|_{L^2(\Omega)}^2\big) .}
					\end{align*}
			\end{itemize}
		\end{itemize}
	\end{remark}

	\section{A posteriori error estimation based on post-processing non-conforming approximations}\label{sec:post-processing}
		
		\qquad In  this section, we refine the  a posteriori error estimate \eqref{eq:general_pd_estimator0}  resorting~to~a~\mbox{post-process}-ing of the non-conforming, discontinuous approximation  \eqref{discrete_primal}
		of the primal problem \eqref{primal} and  discrete convex optimality relations derived in \cite{Bar21,BW21} or Proposition \ref{prop:optimality}.
		
		\begin{proposition}\label{prop:post-pocess0}
			Let $\psi(x,t)\vcentcolon=\psi_h(x,t)$ for almost every $x\in \Omega$ and all $t\in \mathbb{R}$.
			Moreover, let $u_h^{\textit{cr}}\in \mathcal{S}^{1,\textit{cr}}_D(\mathcal{T}_h)$ and $z_h^{\textit{rt}}\in \mathcal{R}T^0_N(\mathcal{T}_h)$ be such that \eqref{eq:discrete_strong_duality},  \eqref{prop:optimality1}  and \eqref{prop:optimality2}  are satisfied.
			Then, for every ${\tilde{u}_h\in \mathcal{S}^1_D(\mathcal{T}_h)}$, we have that
			\begin{align}\begin{aligned}
					\eta^2_{A,h}(\tilde{u}_h, z_h^{\textit{rt}})&= \int_{\Omega}{\phi(\nabla \tilde{u}_h)-\Pi_h z_h^{\textit{rt}}\cdot(\nabla \tilde{u}_h-\nabla_h u_h^{\textit{cr}})-\phi(\nabla_h u_h^{\textit{cr}})\,\textup{d}x}\\
					\eta^2_{B,h}(\tilde{u}_h, z_h^{\textit{rt}})&= \int_{\Omega}{\psi_h(\cdot,\Pi_h \tilde{u}_h)-\textup{div}(z_h^{\textit{rt}})\,(\Pi_h \tilde{u}_h-\Pi_h u_h^{\textit{cr}})-\psi_h(\cdot,\Pi_h u_h^{\textit{cr}})\,\textup{d}x}.
				\end{aligned}\label{eq:post-pocess0.1}
			\end{align}
		\end{proposition}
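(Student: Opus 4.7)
The plan is to invoke the discrete optimality relations \eqref{eq:discrete_optimality_relations2} supplied by Proposition~\ref{prop:optimality}. The hypotheses of the present proposition are precisely those required by case~(i) of Proposition~\ref{prop:optimality}: the strong duality \eqref{eq:discrete_strong_duality} holds, and \eqref{prop:optimality1}, \eqref{prop:optimality2} are in force. Its concluding statement therefore yields the pointwise Fenchel--Young identities
\begin{align*}
\phi^*(\Pi_h z_h^{\textit{rt}}) &= \Pi_h z_h^{\textit{rt}} \cdot \nabla_h u_h^{\textit{cr}} - \phi(\nabla_h u_h^{\textit{cr}}) && \text{a.e.\ in }\Omega,\\
\psi_h^*(\cdot, \textup{div}(z_h^{\textit{rt}})) &= \textup{div}(z_h^{\textit{rt}})\,\Pi_h u_h^{\textit{cr}} - \psi_h(\cdot, \Pi_h u_h^{\textit{cr}}) && \text{a.e.\ in }\Omega.
\end{align*}

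With these identities available, the proof reduces to direct substitution into the definitions of $\eta^2_{A,h}(\tilde u_h, z_h^{\textit{rt}})$ and $\eta^2_{B,h}(\tilde u_h, z_h^{\textit{rt}})$ given in \eqref{eq:general_pd_estimator1}. For the $A$-term, I replace $\phi^*(\Pi_h z_h^{\textit{rt}})$ in the integrand by $\Pi_h z_h^{\textit{rt}} \cdot \nabla_h u_h^{\textit{cr}} - \phi(\nabla_h u_h^{\textit{cr}})$ and combine the two terms involving $\Pi_h z_h^{\textit{rt}}$ into the single factor $-\Pi_h z_h^{\textit{rt}} \cdot (\nabla \tilde u_h - \nabla_h u_h^{\textit{cr}})$; this produces exactly the right-hand side of the first identity in \eqref{eq:post-pocess0.1}. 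For the $B$-term, the analogous replacement of $\psi_h^*(\cdot,\textup{div}(z_h^{\textit{rt}}))$ and grouping of the terms containing $\textup{div}(z_h^{\textit{rt}})$ yields the second identity.

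Since the argument is purely algebraic once Proposition~\ref{prop:optimality} has been applied, I do not anticipate any real obstacle here — in particular, no integration-by-parts (all gradients and divergences of the relevant functions appear as element-wise constants against $\Pi_h$-projected test quantities, and the optimality relations are already compatible with these projections), and no convexity estimate beyond the Fenchel--Young equality is used. The work was already done in establishing Proposition~\ref{prop:optimality}; the present statement is essentially a convenient rewriting of $\eta^2_{A,h}$ and $\eta^2_{B,h}$ that will later expose the residual-type structure in \eqref{intro:a_posteriori_3}.
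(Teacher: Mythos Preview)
Your proposal is correct and matches the paper's own proof essentially verbatim: the paper also invokes Proposition~\ref{prop:optimality} to obtain the optimality relations \eqref{eq:discrete_optimality_relations2} and then substitutes them into the definitions of $\eta^2_{A,h}$ and $\eta^2_{B,h}$ with $\tilde z_h = z_h^{\textit{rt}}$. If anything, your write-up spells out the algebraic substitution more explicitly than the paper does.
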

	
		\begin{proof}
			Appealing to Proposition \ref{prop:optimality},  the optimality relations \eqref{eq:discrete_optimality_relations2}
			apply. 
			Thus, choosing $\tilde{z}_h=z_h^{\textit{rt}}\in \mathcal{R}T^0_N(\mathcal{T}_h)$, we immediately conclude the assertion.
		\end{proof}
	
		\qquad If $\phi\in  C^1(\mathbb{R}^d)$ or $\psi_h(x,\cdot)\in C^1(\mathbb{R})$ for almost every $x\in \Omega$, then~\eqref{eq:general_pd_estimator0}~can~be~refined.
		
		\begin{corollary}\label{cor:post-pocess}
			Let the assumptions of Proposition \ref{prop:post-pocess0} be satisfied. Then, the following statements apply:
			\begin{itemize}[noitemsep,topsep=2pt,leftmargin=!,labelwidth=\widthof{\textit{(iii)}},font=\itshape]
			\item[(i)] If $\phi\in C^1(\mathbb{R}^d)$,  then for every $\tilde{u}_h\in \mathcal{S}^1_D(\mathcal{T}_h)$, we have that
			\begin{align}
			\begin{aligned}
			\eta^2_{A,h}(\tilde{u}_h, z_h^{\textit{rt}}) \leq  \int_{\Omega}{\big(D\phi(\nabla \tilde{u}_h)-D\phi(\nabla_h u_h^{\textit{cr}})\big)\cdot(\nabla \tilde{u}_h-\nabla_h u_h^{\textit{cr}})\,\textup{d}x}.
			\end{aligned}\label{eq:nondata_error.1}
			\end{align}
		 	\item[(ii)]  If $\psi_h(x,\cdot)\in C^1(\mathbb{R})$ for almost every $x\in \Omega$, then for every $\tilde{u}_h\in \mathcal{S}^1_D(\mathcal{T}_h)$,~we~have~that
		 	\begin{align}
		 		\begin{aligned}
		 			\eta^2_{B,h}(\tilde{u}_h, z_h^{\textit{rt}})\leq 	\int_{\Omega}{\big(D\psi_h(\cdot,\Pi_h \tilde{u}_h)-D\psi_h(\cdot,\Pi_h u_h^{\textit{cr}})\big)\cdot(\Pi_h \tilde{u}_h-\Pi_h u_h^{\textit{cr}})\,\textup{d}x},
		 			\end{aligned}\label{eq:data_error.1}
		 	\end{align}
	 		and
	 		\begin{align}
	 			\begin{aligned}
	 				\eta^2_{C,h}(\tilde{u}_h)\leq 	\int_{\Omega}{\big(D\psi_h(\cdot,\tilde{u}_h)-D\psi_h(\cdot,\Pi_h \tilde{u}_h)\big)\cdot(\tilde{u}_h-\Pi_h \tilde{u}_h)\,\textup{d}x}.
	 			\end{aligned}\label{eq:data_error.1.2}
	 		\end{align}
	 	\item[(iii)]  If $\phi^*\in C^1(\mathbb{R}^d)$, then for every $\tilde{z}_h\in \mathcal{R}T^0_N(\mathcal{T}_h)$, we have that
	 	\begin{align}
	 		\begin{aligned}
\eta^2_{D,h}( \tilde{z}_h)\leq \int_{\Omega}{\big(D\phi^*(\tilde{z}_h)-D\phi^*(\Pi_h \tilde{z}_h)\big)\cdot(\tilde{z}_h-\Pi_h \tilde{z}_h)\,\textup{d}x}.
\end{aligned}\label{eq:data_error.2}
	 	\end{align}
			\end{itemize}
		\end{corollary}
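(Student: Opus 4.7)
The plan is to start from the exact representations of $\eta^2_{A,h}(\tilde{u}_h,z_h^{\textit{rt}})$ and $\eta^2_{B,h}(\tilde{u}_h,z_h^{\textit{rt}})$ supplied by Proposition \ref{prop:post-pocess0} and to upgrade the optimality relations \eqref{eq:discrete_optimality_relations2} (which hold by Proposition \ref{prop:optimality} under the present hypotheses) into their differential form via the Fenchel--Young identity (cf.~\eqref{eq:fenchel_young_id}). Specifically, if $\phi\in C^1(\mathbb{R}^d)$, then \eqref{eq:discrete_optimality_relations2} forces $\Pi_hz_h^{\textit{rt}}=D\phi(\nabla_hu_h^{\textit{cr}})$ in $\mathcal{L}^0(\mathcal{T}_h)^d$, and if $\psi_h(x,\cdot)\in C^1(\mathbb{R})$ for a.e.~$x\in \Omega$, then \eqref{eq:discrete_optimality_relations2} forces $\textup{div}(z_h^{\textit{rt}})=D\psi_h(\cdot,\Pi_hu_h^{\textit{cr}})$ in $\mathcal{L}^0(\mathcal{T}_h)$. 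The other main tool is the elementary convexity inequality that for any convex $f\in C^1(\mathbb{R}^k)$ and any $a,b\in \mathbb{R}^k$,
\[
f(b)-f(a)-Df(a)\cdot(b-a)\leq \big(Df(b)-Df(a)\big)\cdot(b-a),
\]
which follows from $f(b)-f(a)=\int_0^1 Df(a+t(b-a))\cdot(b-a)\,\textup{d}t$ together with the monotonicity of $t\mapsto Df(a+t(b-a))\cdot(b-a)$ guaranteed by the convexity of $f$.

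For \emph{(i)} and for the inequality \eqref{eq:data_error.1} in \emph{(ii)}, the proof is then a direct substitution followed by the above convexity inequality applied pointwise. For \emph{(i)}, I substitute $\Pi_hz_h^{\textit{rt}}=D\phi(\nabla_hu_h^{\textit{cr}})$ into the representation of $\eta^2_{A,h}(\tilde{u}_h,z_h^{\textit{rt}})$ from Proposition \ref{prop:post-pocess0} and apply the inequality with $f=\phi$, $a=\nabla_hu_h^{\textit{cr}}$, $b=\nabla\tilde{u}_h$, followed by integration over $\Omega$. For \eqref{eq:data_error.1}, I substitute $\textup{div}(z_h^{\textit{rt}})=D\psi_h(\cdot,\Pi_hu_h^{\textit{cr}})$ into the representation of $\eta^2_{B,h}(\tilde{u}_h,z_h^{\textit{rt}})$ and apply the inequality to $\psi_h(x,\cdot)$ pointwise with $a=\Pi_hu_h^{\textit{cr}}$ and $b=\Pi_h\tilde{u}_h$.

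For \eqref{eq:data_error.1.2} and for \emph{(iii)}, the quantities $\eta^2_{C,h}(\tilde{u}_h)=\int_\Omega \psi_h(\cdot,\tilde{u}_h)-\psi_h(\cdot,\Pi_h\tilde{u}_h)\,\textup{d}x$ and $\eta^2_{D,h}(\tilde{z}_h)=\int_\Omega \phi^*(\tilde{z}_h)-\phi^*(\Pi_h\tilde{z}_h)\,\textup{d}x$ lack a built-in linear correction term, so the convexity inequality cannot be invoked directly. The trick is to insert such a correction for free: since $\psi_h(\cdot,t)\in \mathcal{L}^0(\mathcal{T}_h)$ for every $t$ and $\Pi_h\tilde{u}_h,\Pi_h\tilde{z}_h$ are element-wise constant, the quantities $D\psi_h(\cdot,\Pi_h\tilde{u}_h)$ and $D\phi^*(\Pi_h\tilde{z}_h)$ are element-wise constant, and the defining property of the $L^2$--projection yields $\int_T D\psi_h(\cdot,\Pi_h\tilde{u}_h)(\tilde{u}_h-\Pi_h\tilde{u}_h)\,\textup{d}x=0$ and $\int_T D\phi^*(\Pi_h\tilde{z}_h)\cdot(\tilde{z}_h-\Pi_h\tilde{z}_h)\,\textup{d}x=0$ for every $T\in \mathcal{T}_h$. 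Adding these vanishing linear contributions under the integrals brings the expressions into the shape to which the convexity inequality applies pointwise (with $a=\Pi_h\tilde{u}_h$, $b=\tilde{u}_h$, respectively $a=\Pi_h\tilde{z}_h$, $b=\tilde{z}_h$), and integration delivers \eqref{eq:data_error.1.2} and \eqref{eq:data_error.2}. No serious obstacle arises; the only delicate point is that the cancellation of the inserted linear term must be exploited element-wise, which is precisely where the element-wise-constant structure provided by the $\mathcal{L}^0(\mathcal{T}_h)$--membership of $\psi_h(\cdot,t)$ is used.
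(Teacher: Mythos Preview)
Your proposal is correct and follows essentially the same route as the paper. Both arguments rest on the differential optimality relations $\Pi_hz_h^{\textit{rt}}=D\phi(\nabla_hu_h^{\textit{cr}})$ and $\textup{div}(z_h^{\textit{rt}})=D\psi_h(\cdot,\Pi_hu_h^{\textit{cr}})$ (obtained from \eqref{eq:discrete_optimality_relations2} via Fenchel--Young under the $C^1$ hypotheses), the convexity inequality you state (the paper phrases it in the equivalent tangent-plane form $f(a)\ge f(b)+Df(b)\cdot(a-b)$), and, for \eqref{eq:data_error.1.2} and \eqref{eq:data_error.2}, the $L^2$--orthogonality of $\tilde{u}_h-\Pi_h\tilde{u}_h$ and $\tilde{z}_h-\Pi_h\tilde{z}_h$ to element-wise constants.
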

	
		\begin{proof}
			Let  $\tilde{u}_h\in \mathcal{S}^1_D(\mathcal{T}_h)$ be fixed, but arbitrary.
			
			\textit{ad (i)} If $\phi\in C^1(\mathbb{R}^d)$, then, owing to the convexity of $\phi:\mathbb{R}^d\to \mathbb{R}$, it holds
			\begin{align}
				\phi(\nabla_hu_h^{\textit{cr}})\ge \phi(\nabla \tilde{u}_h)+D\phi(\nabla \tilde{u}_h)\cdot(\nabla_hu_h^{\textit{cr}}-\nabla \tilde{u}_h)\quad\text{ in }\Omega.\label{cor:post-pocess1}
			\end{align}
			Then, the optimality relation $\Pi_hz_h^{\textit{rt}}=D\phi(\nabla_hu_h^{\textit{cr}})$ in $\mathcal{L}^0(\mathcal{T}_h)^d$~(cf.~\eqref{eq:optimality1}$_1$)~and~\eqref{cor:post-pocess1}  yield the assertion.
			
			\textit{ad (ii)} If $\psi_h(x,\cdot)\in C^1(\mathbb{R})$ for almost every  $x\in \Omega$, then, owing to the convexity of $\psi_h(x,\cdot):\mathbb{R}\to \mathbb{R}$ for almost every $x\in \Omega$, it holds
			\begin{align}
				\begin{aligned}
			\psi_h(\cdot,\Pi_hu_h^{\textit{cr}})&\ge \psi_h(\cdot,\Pi_h\tilde{u}_h)+D\psi_h(\cdot,\Pi_h\tilde{u}_h)\,(\Pi_hu_h^{\textit{cr}}- \Pi_h \tilde{u}_h)&&\quad\textup{ in }\Omega,\\
			\psi_h(\cdot,\Pi_h\tilde{u}_h)&\ge \psi_h(\cdot,\tilde{u}_h)+D\psi_h(\cdot,\tilde{u}_h)\,(\Pi_h \tilde{u}_h-\tilde{u}_h)&&\quad\textup{ in }\Omega.
		\end{aligned}\label{cor:post-pocess2}
			\end{align}
			Then, the optimality relation $\textup{div}(z_h^{\textit{rt}})=D\psi_h(\cdot,\Pi_hu_h^{\textit{cr}})$ in $\mathcal{L}^0(\mathcal{T}_h)$ (cf.~\eqref{eq:optimality1}$_2$), \eqref{cor:post-pocess2} and $\tilde{u}_h-\Pi_h \tilde{u}_h\perp 
			D\psi_h(\cdot,\Pi_h \tilde{u}_h)$ in $L^2(\Omega)$ yield the assertion.\vspace*{-2mm}\newpage
			
			\textit{ad (iii)} If  $\phi^*\in C^1(\mathbb{R}^d)$, then, due to the convexity of ${\phi^*:\mathbb{R}^d\to \mathbb{R}}$,~it~holds
			\begin{align}
				\phi^*(\Pi_h\tilde{z}_h)\ge \phi^*(\tilde{z}_h)+D\phi^*(\tilde{z}_h)\cdot(\Pi_h\tilde{z}_h-\tilde{z}_h)\quad\textup{ in }\Omega.
\label{cor:post-pocess3}
			\end{align}
			Then, \eqref{cor:post-pocess3} and $\tilde{z}_h-\Pi_h \tilde{z}_h\perp 
			D\phi^*(\Pi_h\tilde{z}_h)$ in $L^2(\Omega;\mathbb{R}^d)$ yield the assertion.\vspace*{-0.5mm} 
		\end{proof}
		
		\begin{remark}[Improved estimates for strongly convex functionals]
			If $\phi\in C^1(\mathbb{R}^d)$ is strongly convex, i.e., there exists a possibly vanishing bi-variate~functional~${\rho_{\phi}\!:\!\mathbb{R}^d\!\times\!\mathbb{R}^d\!\to \!\mathbb{R}_{\ge 0}}$ such that for every $a,b\in \mathbb{R}^d$, we have that $\phi(b)\ge \phi(a)+D\phi(a)\cdot(a-b )+\rho_{\phi}(a-b,a-b)$, 
			then applying Corollary \ref{cor:post-pocess}, \eqref{eq:general_pd_estimator0} can be improved since we can 
			subtract the error quantity
			\begin{align*}
				\vert \tilde{u}_h- u_h^{\textit{cr}}\vert_{\phi}^2\vcentcolon= \int_{\Omega}{\rho_{\phi}(\nabla \tilde{u}_h-\nabla_h u_h^{\textit{cr}},\nabla \tilde{u}_h-\nabla_h u_h^{\textit{cr}})\,\textup{d}x}
			\end{align*}
			on \hspace*{-0.1mm}the \hspace*{-0.1mm}right-hand \hspace*{-0.1mm}side \hspace*{-0.1mm}or, \hspace*{-0.1mm}equivalently, \hspace*{-0.1mm}add\hspace*{-0.1mm} it \hspace*{-0.1mm}to \hspace*{-0.1mm}the \hspace*{-0.1mm}left-hand \hspace*{-0.1mm}side, \hspace*{-0.1mm}i.e., 
			\hspace*{-0.1mm}in~\hspace*{-0.1mm}the~\hspace*{-0.1mm}\mbox{inequality}~\hspace*{-0.1mm}\eqref{eq:general_pd_estimator0}, the error quantity $\rho_I^2(\tilde{u}_h,u)$ can be replaced by the quantity ${\tilde{\rho}_I^2(\tilde{u}_h,u)\!\vcentcolon=\!\rho_I^2(\tilde{u}_h,u)\!+\!\vert \tilde{u}_h\!-\! u_h^{\textit{cr}}\vert_{\phi}^2}$.
			The~same~equally~applies~to the functional $\psi_h:\Omega\times \mathbb{R}\to \mathbb{R}\cup\{+\infty\}$.
		\end{remark}
		\begin{remark}[Iteration errors]
			If the Crouzeix--Raviart approximation $u_h^{\smash{cr,it}}\in\smash{\mathcal{S}^{1,\textit{cr}}_D(\mathcal{T}_h)}$ of the primal problem  is obtained
			by an iterative scheme, then there~exists~a~$\smash{r_h^{\smash{cr,it}}\in \mathcal{S}^{1,\textit{cr}}_D(\mathcal{T}_h)}$ such \hspace*{-0.1mm}that \hspace*{-0.1mm}$u_h^{\smash{cr,it}}\hspace*{-0.2em}\in\hspace*{-0.15em} \smash{\mathcal{S}^{1,\textit{cr}}_D(\mathcal{T}_h)}$ \hspace*{-0.1mm}is \hspace*{-0.1mm}minimal \hspace*{-0.1mm}for \hspace*{-0.1mm}the \hspace*{-0.1mm}functional \hspace*{-0.1mm}$I_h^{\smash{cr,it}}\hspace*{-0.2em}:\hspace*{-0.15em}\smash{\mathcal{S}^{1,\textit{cr}}_D(\mathcal{T}_h)}\hspace*{-0.15em}\to\hspace*{-0.15em} \mathbb{R}\cup\{\infty\}$,~\hspace*{-0.1mm}for~\hspace*{-0.1mm}\mbox{every} $\smash{v_h\in \mathcal{S}^{1,\textit{cr}}_D(\mathcal{T}_h)}$ defined by
			\begin{align*}
				I_h^{\smash{cr,it}}(v_h)\vcentcolon=\int_{\Omega}{\phi(\nabla_h v_h)\,\textup{d}x}+\int_{\Omega}{\psi_h(\cdot,\Pi_hv_h)\,\textup{d}x}-\int_{\Omega}{r_h^{cr,it}\,v_h\,\textup{d}x}.
			\end{align*}
			Then, the corresponding representation of $z_h^{\textit{rt}}\in \mathcal{R}T^0_N(\mathcal{T}_h)$ requires the contribution~$r_h^{\smash{it}} \vcentcolon= \Pi_hr_h^{\smash{cr,it}}\in \mathcal{L}^0(\mathcal{T}_h)$.
			The difference $r_h^{\smash{it}}-r_h^{\smash{cr,it}}\!\in\! \mathcal{L}^1(\mathcal{T}_h)$ can be controlled~as~the~approximation error in Remark \ref{rem:general_pd_estimator0} (iv).
			The function $r_h^{\smash{it}}\hspace*{-0.1em}\in\hspace*{-0.1em} \mathcal{L}^0(\mathcal{T}_h)$ is explicitly available~via~\mbox{post-processing} the discrete Euler--Lagrange equation
			\begin{align*}
				\int_{\Omega}{D\phi (\nabla_h u_h^{\smash{cr,it}})\cdot\nabla_hv_h\,\textup{d}x}
				+\int_{\Omega}{D\psi_h (\cdot,\Pi_h u_h^{\smash{cr,it}})\,\Pi_hv_h\,\textup{d}x}=\int_{\Omega}{r_h^{cr,it}\,v_h\,\textup{d}x}
			\end{align*}
			for all $v_h\!\in\! \mathcal{S}^{1,\textit{cr}}_D(\mathcal{T}_h)$, assuming that $\phi\!\in\! C^1(\mathbb{R}^d)$ and $\psi_h(x,\cdot)\!\in\! C^1(\mathbb{R})$ for~almost~every~${x\!\in\! \Omega}$.
		\end{remark}
	
		\begin{remark}[Data errors]
			If $\phi\!\in\! C^1(\mathbb{R}^d)$ such that
			$D\phi^*\!\in\! C^{0,\alpha}(\mathbb{R}^d;\mathbb{R}^d)$ for~some~${\alpha\!\in\! (0,1]}$ and $\psi_h(x,t)\vcentcolon=-f_h(x)t$ for almost every $x\in \Omega$ and all $t\in \mathbb{R}$,~where~${f\in L^{p'}(\Omega)}$,~${p\in(1,\infty)}$, and $f_h=\Pi_h f\in \mathcal{L}^0(\mathcal{T}_h)$, then, using the reconstruction formula \eqref{eq:reconstruction_formula}, we observe that
			\begin{align*}
				\begin{aligned}
				\int_{\Omega}{\big(D\phi^*(z_h^{\textit{rt}})-D\phi^*(\Pi_h z_h^{\textit{rt}})\big)\cdot(z_h^{\textit{rt}}-\Pi_h z_h^{\textit{rt}})\,\textup{d}x}
				\leq c_{\alpha,\phi^*}\int_{\Omega}{\big\vert f_hd^{-1}\big(\textup{id}_{\mathbb{R}^d}-\Pi_h\textup{id}_{\mathbb{R}^d}\big) \big\vert^{1+\alpha}\,\textup{d}x},
			\end{aligned}
			\end{align*}
			while, owing to $\textup{div}(z_h^{\textit{rt}})=-f_h$ in $\mathcal{L}^0(\mathcal{T}_h)$, it holds
			\begin{align*}
			\int_{\Omega}{\psi_h(\cdot,\Pi_h \tilde{u}_h)-\textup{div}(z_h^{\textit{rt}})\,(\Pi_h \tilde{u}_h-\Pi_h u_h^{\textit{cr}})+\psi_h(\cdot,\Pi_h u_h^{\textit{cr}})\,\textup{d}x}&=0,\\
			\int_{\Omega}{\psi_h(\cdot,\tilde{u}_h)-\psi_h(\cdot,\Pi_h \tilde{u}_h)\,\textup{d}x}&=0.
			\end{align*}
			As a result, resorting to Proposition \ref{prop:general_pd_estimator0}, Proposition \ref{prop:post-pocess0} and Corollary~\ref{cor:post-pocess},~we~conclude~that
			\begin{align*}
				\rho_I^2(\tilde{u}_h,u)&\leq \int_{\Omega}{\big(D\phi(\nabla \tilde{u}_h)-D\phi(\nabla_h u_h^{\textit{cr}})\big)\cdot(\nabla \tilde{u}_h-\nabla_h u_h^{\textit{cr}})\,\textup{d}x}\\&\quad+c_{\alpha,\phi^*}\int_{\Omega}{\big\vert f_hd^{-1}\big(\textup{id}_{\mathbb{R}^d}-\Pi_h\textup{id}_{\mathbb{R}^d}\big) \big\vert^{1+\alpha}\,\textup{d}x},
			\end{align*}
			where the first integral on the right-hand side has the interpretation of a residual, while the second integral contains data approximation errors.
		\end{remark}
		
		\section{Application: Non-linear Dirichlet problems}\label{sec:applications}\vspace*{-1mm}
		
		\qquad In this section, we apply the general theory built previously to well-known non-linear Dirichlet problems including  the $p$--Dirichlet problem and an optimal design problem.~For the $p$--Dirichlet problem, we establish a reliability and efficiency result (cf. Theorem~\ref{thm:equivalences}).
		\vspace*{-1.5mm}

		\subsection{Continuous non-linear Dirichlet problem}\vspace*{-1mm}
		
		\qquad Given a right-hand side $f\hspace*{-0.1em}\in \hspace*{-0.1em} L^{p'}\hspace*{-0.1em}(\Omega)$, $p\hspace*{-0.1em}\in\hspace*{-0.1em} (1,\infty)$, and a proper,~convex~and~lower~semi-continuous functional $\phi:\mathbb{R}^d\to \mathbb{R}\cup\{+\infty\}$, we examine the 
		non-linear Dirichlet problem defined by the minimization of $I:W^{1,p}_D(\Omega)\to \mathbb{R}\cup\{+\infty\}$,~for~every $v\in W^{1,p}_D(\Omega)$ defined~by\vspace*{-0.5mm}
		\begin{align}
			I(v)\vcentcolon=\int_{\Omega}{ \phi( \nabla v)\,\textup{d}x}-\int_{\Omega}{f\,v\,\textup{d}x },\label{eq:dirichlet_primal}
		\end{align}
		i.e., $\psi(t,x)\hspace*{-0.1em}\vcentcolon=\hspace*{-0.1em}-f(x)t$ for almost every $x\hspace*{-0.1em}\in\hspace*{-0.1em} \Omega$ and $t\hspace*{-0.1em}\in\hspace*{-0.1em} \mathbb{R}$. Proceeding as,~e.g.,~in~\mbox{\cite[p.~113~ff.]{ET99}}, one finds that
		the (Fenchel) dual problem determines~a~vector~field~$\smash{z\in W^{p'}_N(\textup{div};\Omega)}$~that~is maximal for $D:\smash{W^{p'}_N(\textup{div};\Omega)}\to \mathbb{R}\cup\{-\infty\}$, for every $y\in \smash{W^{p'}_N(\textup{div};\Omega)}$ defined by\vspace*{-0.5mm}
		\begin{align}
			D(y)\vcentcolon=-\int_{\Omega}{ \phi^*(y)\,\textup{d}x }-I_{\{-f\}}(\textup{div}(y)),\label{eq:dirichlet_dual}
		\end{align}
		where $I_{\{-f\}}\hspace*{-0.1em}:\hspace*{-0.1em}\smash{L^{p'}(\Omega)}\hspace*{-0.1em}\to\hspace*{-0.1em} \mathbb{R}\hspace*{-0.1em}\cup\hspace*{-0.1em}\{+\infty\}$ is defined by $I_{\{-f\}}(v)\hspace*{-0.1em}\vcentcolon=\hspace*{-0.1em}0$~if~$v\hspace*{-0.1em}=\hspace*{-0.1em}-f$ and  $I_{\{-f\}}(v)\hspace*{-0.1em}\vcentcolon=\hspace*{-0.1em}+\infty$ else. 
		If $\phi\in C^0(\mathbb{R}^d)$, then \cite[Proposition 5.1]{ET99} establishes the existence of a maximizer $\smash{z\in W^{p'}_N(\textup{div};\Omega)}$ of \eqref{eq:dirichlet_discrete_dual} and that strong duality, i.e., $D(z)=\inf_{v\in \smash{W^{1,p}_D(\Omega)}}{I(v)}$, applies. If, in addition,  a minimizer $u\!\in\! W^{1,p}_D(\Omega)$ of \eqref{eq:dirichlet_primal} exists, e.g., if there exist $c_0\!>\!0$ and $c_1\!\ge\! 0$ such that $\phi(t)\ge c_0\vert t\vert^p-c_1$ for all $t\in \mathbb{R}^d$, then \cite[Proposition 5.1]{ET99}~yields~the~optimality~relations\vspace*{-0.5mm}
		\begin{align}
				\smash{\textup{div}(z)=-f\quad\text{ in }L^{p'}(\Omega),\qquad z\cdot\nabla u=\phi^*(z)+\phi(\nabla u)\quad\textup{ a.e. in }\Omega.}\label{eq:dirichlet_optimality1}
		\end{align}
		If $\phi\in C^1(\mathbb{R}^d)$ and there exist $c_0,c_1\ge 0$ such that $\vert D\phi(t)\vert\leq c_0\vert t\vert^{p-1}+c_1$ for all $t\in \mathbb{R}^d$, then, by the Fenchel--Young identity (cf.~\eqref{eq:fenchel_young_id}), the optimality relations \eqref{eq:dirichlet_optimality1} are~equivalent~to\vspace*{-0.5mm}
		\begin{align}
			\smash{\textup{div}(z)=-f\quad\text{ in }\smash{L^{p'}(\Omega)},\qquad z=D\phi(\nabla u)\quad\text{ in }\smash{L^{p'}(\Omega;\mathbb{R}^d)}}.\label{eq:dirichlet_optimality2}
		\end{align}\vspace*{-8mm}
	
		\subsection{Discrete non-linear Dirichlet problem}\vspace*{-1mm}
		
			\qquad Given a right-hand side $f\in  L^{p'}(\Omega)$, $p\in (1,\infty)$, and a proper,~convex~and~lower~semi-continuous functional $\phi:\mathbb{R}^d\to \mathbb{R}\cup\{+\infty\}$, with $f_h\vcentcolon=\Pi_h f\hspace*{-0.1em}\in\hspace*{-0.1em} \mathcal{L}^0(\mathcal{T}_h)$, the discrete~\mbox{non-linear} Dirichlet \hspace*{-0.1mm}problem \hspace*{-0.1mm}determines \hspace*{-0.1mm}a \hspace*{-0.1mm}Crouzeix--Raviart \hspace*{-0.1mm}function \hspace*{-0.1mm}$u_h^{\textit{cr}}\!\in\! \mathcal{S}^{1,\textit{cr}}_D(\mathcal{T}_h)$ \hspace*{-0.1mm}that \hspace*{-0.1mm}is \hspace*{-0.1mm}\mbox{minimal}~for $I_h^{\textit{cr}}:\mathcal{S}^{1,\textit{cr}}_D(\mathcal{T}_h)\to \mathbb{R}\cup\{+\infty\}$, for every 
			$v_h\in \mathcal{S}^{1,\textit{cr}}_D(\mathcal{T}_h)$ defined by\vspace*{-0.5mm}
		\begin{align}
			I_h^{\textit{cr}}(v_h)\vcentcolon=\int_{\Omega}{ \phi( \nabla_h v_h)\,\textup{d}x}-\int_{\Omega}{f_h\,\Pi_hv_h\,\textup{d}x },\label{eq:dirichlet_discrete_primal}
		\end{align}
		Appealing to Subsection \ref{subsec:discrete_convex_min}, the corresponding dual problem to \eqref{eq:dirichlet_discrete_primal} determines a Raviart--Thomas vector field $z_h^{\textit{rt}}\in \mathcal{R}T^0_N(\mathcal{T}_h)$ that is maximal for $D_h^{\textit{rt}}:\mathcal{R}T^0_N(\mathcal{T}_h)\to \mathbb{R}\cup\{-\infty\}$, for every $y_h\in \mathcal{R}T^0_N(\mathcal{T}_h)$ defined by\vspace*{-0.5mm}
	\begin{align}
		D_h^{\textit{rt}}(y_h)\vcentcolon=-\int_{\Omega}{\phi^*( \Pi_hy_h)\,\textup{d}x}-I_{\{-f_h\}}(\textup{div}(y_h)).\label{eq:dirichlet_discrete_dual}
	\end{align}
	If a minimum $u_h^{\textit{cr}}\in \mathcal{S}^{1,\textit{cr}}_D(\mathcal{T}_h)$ of \eqref{eq:dirichlet_discrete_primal} and a maximum  $z_h^{\textit{rt}}\in \mathcal{R}T^0_N(\mathcal{T}_h)$~of~\eqref{eq:dirichlet_discrete_dual}~are~given~and discrete \hspace*{-0.1mm}strong duality, \hspace*{-0.1mm}i.e., \hspace*{-0.1mm}$I_h^{\textit{cr}}(u_h^{\textit{cr}})=D_h^{\textit{rt}}(z_h^{\textit{rt}})$,  \hspace*{-0.1mm}and  \hspace*{-0.1mm}\eqref{prop:optimality1} \hspace*{-0.1mm}apply, \hspace*{-0.1mm}then~\hspace*{-0.1mm}Proposition~\hspace*{-0.1mm}\ref{prop:optimality}~\hspace*{-0.1mm}yields\vspace*{-0.5mm}
	\begin{align*}
		\smash{\textup{div}(z_h^{\textit{rt}})=-f_h\quad\text{ in }\mathcal{L}^0(\mathcal{T}_h),\quad\;\Pi_hz_h^{\textit{rt}}\cdot\nabla_hu_h^{\textit{cr}}=\phi^*(\Pi_hz_h^{\textit{rt}})+\phi(\nabla_hu_h^{\textit{cr}})\quad\text{ in }\mathcal{L}^0(\mathcal{T}_h).}
	\end{align*}
	If a  minimizer $u_h^{\textit{cr}}\in \mathcal{S}^{1,\textit{cr}}_D(\mathcal{T}_h)$ of \eqref{eq:dirichlet_discrete_primal} exists, e.g., if   $\phi(t)\ge c_0\vert t\vert^p-c_1$ for all $t\in \mathbb{R}^d$~for~some  $c_0>0$ and $c_1\ge 0$, and if $\phi\in C^1(\mathbb{R}^d)$, then a maximizer  $z_h^{\textit{rt}}\in \mathcal{R}T^0_N(\mathcal{T}_h)$~of~\eqref{eq:dirichlet_discrete_dual}~is~given~by\vspace*{-0.5mm}
	\begin{align}
		\smash{z_h^{\textit{rt}}\vcentcolon=D\phi(\nabla_hu_h^{\textit{cr}})-f_hd^{-1}\big(\textup{id}_{\mathbb{R}^d}-\Pi_h\textup{id}_{\mathbb{R}^d}\big)\quad\textup{ in } \mathcal{R}T^0_N(\mathcal{T}_h)},\label{eq:dirichlet_discrete_reconstruction}
	\end{align}
	i.e., $\smash{\Pi_hz_h^{\textit{rt}}=D\phi(\nabla_hu_h^{\textit{cr}})}$ in $\smash{\mathcal{L}^0(\mathcal{T}_h)^d}\!$, 
    and discrete  strong duality applies.\vspace*{-1cm}\newpage 
		
		\begin{proposition}[Error estimate via comparison to a non-conforming approximation]\label{prop:post-processing_pDirichlet}
			Let $f=f_h\in \mathcal{L}^0(\mathcal{T}_h)$. Moreover, let $u_h^{\textit{cr}}\in \mathcal{S}^{1,\textit{cr}}_D(\mathcal{T}_h)$  and  $z_h^{\textit{rt}}\in \mathcal{R}T^0_N(\mathcal{T}_h)$ be~such~that~\eqref{eq:discrete_strong_duality},  \eqref{prop:optimality1} and \eqref{prop:optimality2} are satisfied.  Then, for every $\tilde{u}_h\in \mathcal{S}^1_D(\mathcal{T}_h)$, we have that
			\begin{align}
				\begin{aligned}	\rho^2_I(\tilde{u}_h,u)&\leq \eta^2_h(\tilde{u}_h,z^{\textit{rt}}_h)
					\\&=\int_{\Omega}{\phi(\nabla \tilde{u}_h)-\Pi_h z_h^{\textit{rt}}\cdot\big(\nabla \tilde{u}_h-\nabla_h u_h^{\textit{cr}}\big)-\phi(\nabla_h u_h^{\textit{cr}})\,\textup{d}x}
					\\&\quad+\int_{\Omega}{\phi^*(z_h^{\textit{rt}})-\phi^*(\Pi_hz_h^{\textit{rt}})\,\textup{d}x},
				\end{aligned}\label{eq:apost_plaplacian_cr1}
			\end{align}
			where, again, $\rho_I^2:\smash{W^{1,p}_D(\Omega)}\times \smash{W^{1,p}_D(\Omega)}\to \mathbb{R}_{\ge 0}$ denotes a measure for the co-coercivity~of $I: \smash{W^{1,p}_D(\Omega)}\to \mathbb{R}\cup\{+\infty\}$ at $u\in \smash{W^{1,p}_D(\Omega)}$, i.e.,  ${\rho_I^2(v,u)\leq I(v)-I(u)}$~for~all~${v\in \smash{W^{1,p}_D(\Omega)}}$.\vspace*{-0.5mm} 
		\end{proposition}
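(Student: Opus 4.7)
The plan is to specialize the general primal-dual a posteriori error estimate \eqref{eq:general_pd_estimator0} to the Dirichlet setting $\psi_h(x,t)=-f_h(x)t$ and to the admissible choice $\tilde{z}_h=z_h^{\textit{rt}}\in\mathcal{R}T^0_N(\mathcal{T}_h)\subseteq W^{p'}_N(\textup{div};\Omega)$. This immediately delivers the reliability bound $\rho_I^2(\tilde{u}_h,u)\leq \eta_h^2(\tilde{u}_h,z_h^{\textit{rt}})$, and the remaining task is to show that two of the four summands in the decomposition of Remark \ref{rem:general_pd_estimator0}(iv) vanish for this particular data.

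Concretely, I would invoke Remark \ref{rem:general_pd_estimator0}(iv) to write $\eta_h^2(\tilde{u}_h,z_h^{\textit{rt}})=\eta_{A,h}^2(\tilde{u}_h,z_h^{\textit{rt}})+\eta_{B,h}^2(\tilde{u}_h,z_h^{\textit{rt}})+\eta_{C,h}^2(\tilde{u}_h)+\eta_{D,h}^2(z_h^{\textit{rt}})$ and then handle the four pieces separately. For the data contribution $\eta_{C,h}^2(\tilde{u}_h)=\int_{\Omega}\psi_h(\cdot,\tilde{u}_h)-\psi_h(\cdot,\Pi_h\tilde{u}_h)\,\textup{d}x$, I substitute $\psi_h(x,t)=-f_h(x)t$ and use that $f_h\in\mathcal{L}^0(\mathcal{T}_h)$ together with $\Pi_h(\tilde{u}_h-\Pi_h\tilde{u}_h)=0$ to conclude $\eta_{C,h}^2(\tilde{u}_h)=0$. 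For $\eta_{B,h}^2(\tilde{u}_h,z_h^{\textit{rt}})$, Proposition \ref{prop:post-pocess0} gives the expression $\int_{\Omega}\psi_h(\cdot,\Pi_h\tilde{u}_h)-\textup{div}(z_h^{\textit{rt}})(\Pi_h\tilde{u}_h-\Pi_hu_h^{\textit{cr}})-\psi_h(\cdot,\Pi_hu_h^{\textit{cr}})\,\textup{d}x$; since Proposition \ref{prop:optimality} applies under \eqref{eq:discrete_strong_duality}, \eqref{prop:optimality1}, \eqref{prop:optimality2} and yields the discrete optimality relation $\textup{div}(z_h^{\textit{rt}})=D\psi_h(\cdot,\Pi_hu_h^{\textit{cr}})=-f_h$ in $\mathcal{L}^0(\mathcal{T}_h)$, a direct computation shows that this term also vanishes.

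For the remaining two pieces, I use Proposition \ref{prop:post-pocess0} to rewrite $\eta_{A,h}^2(\tilde{u}_h,z_h^{\textit{rt}})=\int_{\Omega}\phi(\nabla\tilde{u}_h)-\Pi_hz_h^{\textit{rt}}\cdot(\nabla\tilde{u}_h-\nabla_hu_h^{\textit{cr}})-\phi(\nabla_hu_h^{\textit{cr}})\,\textup{d}x$, which relies on the other discrete optimality relation from Proposition \ref{prop:optimality}, namely $\Pi_hz_h^{\textit{rt}}\cdot\nabla_hu_h^{\textit{cr}}=\phi^*(\Pi_hz_h^{\textit{rt}})+\phi(\nabla_hu_h^{\textit{cr}})$. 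The term $\eta_{D,h}^2(z_h^{\textit{rt}})=\int_{\Omega}\phi^*(z_h^{\textit{rt}})-\phi^*(\Pi_hz_h^{\textit{rt}})\,\textup{d}x$ is retained unchanged, as it is already in the desired form. Summing the four contributions yields the claimed identity for $\eta_h^2(\tilde{u}_h,z_h^{\textit{rt}})$.

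The only potentially subtle step is the bookkeeping for $\eta_{B,h}^2$: one must verify that the discrete optimality relation $\textup{div}(z_h^{\textit{rt}})=-f_h$ truly holds under the hypotheses stated (rather than only in the $C^1$ setting), but this is precisely what Proposition \ref{prop:optimality} provides because $t\mapsto\psi_h(x,t)=-f_hx\cdot t$ is smooth in $t$ (so sub-differentials reduce to derivatives) and $f_h$ is element-wise constant. Once this identification is made, the rest of the proof is purely algebraic manipulation of the definitions in \eqref{eq:general_pd_estimator1} and requires no further estimates.
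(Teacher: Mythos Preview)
Your proposal is correct and follows essentially the same approach as the paper's proof, which reads in its entirety: ``Follows from Proposition \ref{prop:general_pd_estimator0}, Proposition \ref{prop:post-pocess0} and $\eta_{B,h}^2(\tilde{u}_h,z_h^{\textit{rt}})=\eta_{C,h}^2(\tilde{u}_h)=0$.'' You have simply unpacked these three ingredients with more detail, including the explicit verification that the linear form $\psi_h(x,t)=-f_h(x)t$ forces both $\eta_{B,h}^2$ and $\eta_{C,h}^2$ to vanish.
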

		
		\begin{proof}
				Follows from Proposition \ref{prop:general_pd_estimator0}, Proposition \ref{prop:post-pocess0} and $\eta_{B,h}^2(\tilde{u}_h,z_h^{\textit{rt}})=\eta_{C,h}^2(\tilde{u}_h)=0$.\vspace*{-0.5mm} 
		\end{proof}\vspace*{-0.5mm} 
	
		\subsection{$p$--Dirichlet problem}\vspace*{-0.5mm} 
		
		\qquad In the particular case $\phi\vcentcolon=\smash{\frac{1}{p}}\vert \cdot\vert^p\in C^1(\mathbb{R}^d)$, $p\in (1,\infty)$, the non-linear~\mbox{Dirichlet~problem} \eqref{eq:dirichlet_primal} \hspace*{-0.15mm}reduces \hspace*{-0.15mm}to \hspace*{-0.15mm}the \hspace*{-0.15mm}well-known \hspace*{-0.15mm}$p$\hspace*{-0.1mm}--\hspace*{-0.1mm}Dirichlet \hspace*{-0.15mm}problem.
		 \hspace*{-0.5mm}An \hspace*{-0.15mm}important \hspace*{-0.15mm}property~\hspace*{-0.15mm}of~\hspace*{-0.15mm}the~\hspace*{-0.15mm}\mbox{$p$\hspace*{-0.1mm}--\hspace*{-0.1mm}Dirichlet} \hspace*{-0.1mm}problem \hspace*{-0.1mm}is \hspace*{-0.1mm}that \hspace*{-0.1mm}its \hspace*{-0.1mm}defining \hspace*{-0.1mm}functional \hspace*{-0.1mm}\eqref{eq:dirichlet_primal}  \hspace*{-0.1mm}is \hspace*{-0.1mm}not \hspace*{-0.1mm}only co-coercive~\hspace*{-0.1mm}at~\hspace*{-0.1mm}a~\hspace*{-0.1mm}minimizer~\hspace*{-0.1mm}${u\hspace*{-0.15em}\in\hspace*{-0.15em}\smash{W^{1,p}_D(\Omega)}}$, but even strongly convex.~More~precisely, there exists a  metric ${\rho_I^2\hspace*{-0.15em}:\hspace*{-0.15em}\smash{W^{1,p}_D(\Omega)}\hspace*{-0.15em}\times\hspace*{-0.15em} \smash{W^{1,p}_D(\Omega)}\hspace*{-0.15em}\to\hspace*{-0.15em} \mathbb{R}_{\ge 0}}$ such that for every $v\in\smash{W^{1,p}_D(\Omega)}$,~it~holds
		\begin{align}
			\smash{c_\rho^{-1}\rho_I^2(v,u)\leq I(v)-I(u)\leq c_\rho\rho_I^2(v,u)},\label{eq:stong_convexity}
		\end{align}
		for some constant $c_\rho>0$.  But which is the right choice for ${\rho_I^2:\smash{W^{1,p}_D(\Omega)}\times \smash{W^{1,p}_D(\Omega)}\to \mathbb{R}_{\ge 0}}$? The canonical choice $\smash{\rho_I^2(v,w)\hspace*{-0.15em}\vcentcolon=\hspace*{-0.15em}\|\nabla v-\nabla w\|_{L^p(\Omega;\mathbb{R}^d)}^p}$ for all $v,w\hspace*{-0.15em}\in\hspace*{-0.15em} \smash{W^{1,p}_D(\Omega)}$ is not well-suited since \eqref{eq:stong_convexity}, in general, does not hold and one obtains convergence rates that are sub-optimal for 
		 $\smash{\mathcal{S}^1_D(\mathcal{T}_h)}$,~cf.~\cite{BL93}. Instead,
		a so-called $F$--metric, for every $v,w\in \smash{W^{1,p}_D(\Omega)}$~defined~by
		\begin{align*}
			\smash{\rho_F^2(v,w)\vcentcolon=\|F(\nabla v)-F(\nabla w)\|_{L^2(\Omega;\mathbb{R}^d)}^2},
		\end{align*}
		where  the vector-valued mapping $F:\mathbb{R}^d\to \mathbb{R}^d$ is defined~by~${F(a)\vcentcolon=\smash{\vert a\vert^{\smash{\frac{p-2}{2}}}a}}$~for~all~${a\in \mathbb{R}^d}$,
		has been introduced and widely employed in the literature,~cf.~\mbox{\cite{BL93,EL05,LY01A,LY01B,LY02,DK08,BDK12,BM20}}.
		The $F$--metric satisfies \eqref{eq:stong_convexity}, cf. \cite[Lemma 16]{DK08}, so that
		we set $\rho_I^2(v,w)\vcentcolon=\rho_F^2(v,w)$~for~all $v,w\in \smash{W^{1,p}_D(\Omega)}$. Note that from \eqref{eq:stong_convexity} and the definiteness of ${\rho_I^2\hspace*{-0.15em}:\hspace*{-0.15em}\smash{W^{1,p}_D(\Omega)}\hspace*{-0.15em}\times\hspace*{-0.15em} \smash{W^{1,p}_D(\Omega)}\hspace*{-0.15em}\to\hspace*{-0.15em} \mathbb{R}_{\ge 0}}$, it follows that the $p$--Dirichlet problem admits a unique minimizer.

		This particular choice, in turn, also enables us to 
		relate the primal-dual a posteriori error estimator to the  residual type a posteriori error estimator~in~\cite{DK08},~i.e.,~if~${u^{\textit{c}}_h\!\in\! \smash{\mathcal{S}^1_D(\mathcal{T}_h)}}$~denotes the unique~minimizer~of $\smash{I_h^{\textit{c}}\vcentcolon=\smash{I|_{\smash{\mathcal{S}^1_D(\mathcal{T}_h)}}}:\smash{\mathcal{S}^1_D(\mathcal{T}_h)}\to \mathbb{R}}$, the quantity\vspace*{-0.5mm} 
		\begin{align}
		\begin{aligned}
		\eta^2_{\textit{res},h}(u^{c}_h)\vcentcolon=\sum_{T\in \mathcal{T}_h}{\eta_{\textit{res},T}^2(u^{c}_h)},\end{aligned}\label{rem:residual.1.1}
		\end{align}
		where for every $T\in \mathcal{T}_h$ and $S\in \mathcal{S}_h\setminus \partial\Omega$ with $S\subseteq \partial T$
		\begin{align}
		\begin{aligned}
		\eta_{\textit{res},T}^2(u^{c}_h)&\vcentcolon=\eta_{E,T}^2(u^{c}_h)+\sum_{S\in \mathcal{S}_h\setminus \partial\Omega;S\subseteq \partial T}{\eta_{J,S}^2(u^{c}_h)},\\
	\eta_{E,T}^2(u^{c}_h)&\vcentcolon=\int_T{\big(\vert \nabla u^{c}_h\vert^{p-1}+h_T\vert f_h\vert\big)^{p'-2}h_T^2\vert f_h\vert^2\,\textup{d}x},\\
	\eta_{J,S}^2(u^{c}_h)&\vcentcolon=\int_S{h_S\vert \jump{F(\nabla u^{c}_h)}_S\vert^2\,\textup{d}s}.\end{aligned}\label{rem:residual.1.2}
	\end{align}
	In \cite[Lemma 8 \& Corollary 11]{DK08}, it has been shown that the error estimator~is~reliable~and efficient, i.e., there exist constants $ c_{\textit{rel}},c_{\textit{eff}}>0$~such~that\footnote{Here, we assume that $f=f_h\in \mathcal{L}^0(\mathcal{T}_h)$. For the general case $f\in L^{p'}(\Omega)$, $p\in (1,\infty)$,~oscillation terms, cf. Remark \ref{rmk:1} (ii), need to be added in \eqref{eq:rel-eff}.\vspace*{-13mm}}
	\begin{align}
		\smash{c_{\textit{eff}}\,\eta_{\textit{res},h}^2(u^{c}_h)\leq \rho_I^2(u,u^{c}_h)\leq c_{\textit{rel}}\,\eta_{\textit{res},h}^2(u^{c}_h)}.\label{eq:rel-eff}
	\end{align}
		
		Generalizing the procedure in  \cite{BreS08,Gud10A,Gud10B} and resorting~to~particular~\mbox{properties}~of~the node-averaging \hspace*{-0.1mm}operator \hspace*{-0.1mm}$\mathcal{J}^{\textit{av}}_h\hspace*{-0.15em}:\hspace*{-0.15em}\mathcal{S}^{1,\textit{cr}}_D(\mathcal{T}_h)\hspace*{-0.15em}\to\hspace*{-0.15em} \mathcal{S}^1_D(\mathcal{T}_h)$, \hspace*{-0.1mm}cf. \hspace*{-0.5mm}Appendix \hspace*{-0.1mm}\ref{subsec:auxiliary},~\hspace*{-0.1mm}we~\hspace*{-0.1mm}are~\hspace*{-0.1mm}able~\hspace*{-0.1mm}to~\hspace*{-0.1mm}\mbox{establish} the global equivalence of the primal-dual a posteriori~error~estimator,~cf.~\eqref{eq:apost_plaplacian_cr1}~with~$\smash{\tilde{u}_h\hspace*{-0.1em}=\hspace*{-0.1em}u_h^{\textit{c}}}$ $\in\smash{\hspace*{-0.15em}\mathcal{S}^1_D(\mathcal{T}_h)}$ \hspace*{-0.15mm}and \hspace*{-0.15mm}$\tilde{z}_h\hspace*{-0.15em}=\hspace*{-0.15em}z_h^{\textit{rt}}\hspace*{-0.15em}\in\hspace*{-0.15em} \smash{\mathcal{R}T^0_N(\mathcal{T}_h)}$ \hspace*{-0.15mm}and \hspace*{-0.15mm}the \hspace*{-0.15mm}residual \hspace*{-0.15mm}type \hspace*{-0.15mm}a \hspace*{-0.15mm}posteriori \hspace*{-0.15mm}error~\hspace*{-0.15mm}\mbox{estimator},~\hspace*{-0.15mm}cf.~\hspace*{-0.15mm}\eqref{rem:residual.1.1}.
	
		\begin{theorem}[Equivalence to residual type a posteriori error estimator]\label{thm:equivalences} Let 
			\hspace*{-0.15mm}$\phi\hspace*{-0.15em}\vcentcolon=\hspace*{-0.15em}\smash{\frac{1}{p}}\vert\hspace*{-0.1em}\cdot\hspace*{-0.1em}\vert^p\hspace*{-0.15em}\in\hspace*{-0.15em} C^1(\mathbb{R}^d)$, \hspace*{-0.15mm}$p\in (1,\infty)$, \hspace*{-0.15mm}and \hspace*{-0.15mm}$f=f_h\in \mathcal{L}^0(\mathcal{T}_h)$. \hspace*{-0.5mm}Then, \hspace*{-0.15mm}there~\hspace*{-0.15mm}exists~\hspace*{-0.15mm}a~\hspace*{-0.15mm}\mbox{constant}~\hspace*{-0.15mm}${c_{\textit{eq}}>0}$ such that 
			\begin{align}
					\smash{c_{\textit{eq}}^{-1}\, \eta_h^2(u_h^{\textit{c}},z_h^{\textit{rt}})\leq \eta_{\textit{res},h}^2(u^{\textit{c}}_h)\leq c_{\textit{eq}}\,\eta_h^2(u_h^{\textit{c}},z_h^{\textit{rt}}).}
		\label{eq:equivalence}
			\end{align}
		\end{theorem}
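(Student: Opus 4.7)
I rely on the well-known equivalence, cf.~\cite{DK08,BDK12},
\begin{align*}
\phi(a)-D\phi(b)\cdot(a-b)-\phi(b) \simeq |F(a)-F(b)|^2,\qquad a,b\in\mathbb{R}^d,
\end{align*}
valid uniformly for $\phi=\tfrac{1}{p}|\cdot|^p$, together with its dual counterpart for $\phi^*$ and $F^*(a):=|a|^{(p'-2)/2}a$. The reconstruction formula \eqref{eq:dirichlet_discrete_reconstruction} yields $\Pi_h z_h^{\textit{rt}}=D\phi(\nabla_h u_h^{\textit{cr}})$ and $|z_h^{\textit{rt}}-\Pi_h z_h^{\textit{rt}}|\leq d^{-1}h_T|f_h|$ element-wise, so that the two integrals in \eqref{eq:apost_plaplacian_cr1} become pointwise equivalent to $|F(\nabla u_h^{\textit{c}})-F(\nabla_h u_h^{\textit{cr}})|^2$ and to the shifted $N$-function quantity $(|\nabla_h u_h^{\textit{cr}}|^{p-1}+h_T|f_h|)^{p'-2}h_T^2|f_h|^2$, respectively. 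The theorem thus reduces to comparing these two quantities with $\eta_{\textit{res},h}^2(u_h^{\textit{c}})$.

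For the direction $\eta_h^2(u_h^{\textit{c}},z_h^{\textit{rt}})\leq c_{\textit{eq}}\,\eta_{\textit{res},h}^2(u_h^{\textit{c}})$, the data-term contribution matches $\sum_T \eta_{E,T}^2$ directly after absorbing the discrepancy between $|\nabla_h u_h^{\textit{cr}}|^{p-1}$ and $|\nabla u_h^{\textit{c}}|^{p-1}$ into the Bregman part. For the Bregman part itself, I insert $\mathcal{J}_h^{\textit{av}} u_h^{\textit{cr}}\in\mathcal{S}^1_D(\mathcal{T}_h)$ as intermediate function and split
\begin{align*}
\|F(\nabla u_h^{\textit{c}})-F(\nabla_h u_h^{\textit{cr}})\|_{L^2(\Omega;\mathbb{R}^d)}^2 \lesssim \|F(\nabla u_h^{\textit{c}})-F(\nabla\mathcal{J}_h^{\textit{av}} u_h^{\textit{cr}})\|_{L^2(\Omega;\mathbb{R}^d)}^2 + \|F(\nabla\mathcal{J}_h^{\textit{av}} u_h^{\textit{cr}})-F(\nabla_h u_h^{\textit{cr}})\|_{L^2(\Omega;\mathbb{R}^d)}^2.
\end{align*}
The second summand is estimated by the CR-jump quantity $\sum_S\int_S h_S|\jump{F(\nabla_h u_h^{\textit{cr}})}_S|^2\,\textup{d}s$ by means of the shifted $N$-function node-averaging estimates of Appendix~\ref{subsec:auxiliary}, and these are in turn equivalent to $\sum_S \eta_{J,S}^2$ since the jumps of $F(\nabla u_h^{\textit{c}})$ and $F(\nabla_h u_h^{\textit{cr}})$ across inner sides differ only by quantities controlled by the Bregman part. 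The first summand is absorbed into $\eta_{\textit{res},h}^2(u_h^{\textit{c}})$ by a triangle inequality over the continuous minimizer $u\in W^{1,p}_D(\Omega)$ combined with the reliability bound \eqref{eq:rel-eff} for $u_h^{\textit{c}}$ and an analogous reliability estimate for $\mathcal{J}_h^{\textit{av}} u_h^{\textit{cr}}$.

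For the reverse direction $\eta_{\textit{res},h}^2(u_h^{\textit{c}})\leq c_{\textit{eq}}\,\eta_h^2(u_h^{\textit{c}},z_h^{\textit{rt}})$, each $\eta_{E,T}^2$ is recovered from the data-term part of $\eta_h^2$ by the same shifted $N$-function equivalence. The jump contribution is handled by the decomposition
\begin{align*}
\jump{F(\nabla u_h^{\textit{c}})}_S=\jump{F(\nabla u_h^{\textit{c}})-F(\nabla_h u_h^{\textit{cr}})}_S+\jump{F(\nabla_h u_h^{\textit{cr}})}_S
\end{align*}
together with the discrete trace inequality from Subsection \ref{subsec:crouzeix_raviart}, which reduces each piece to element-wise $L^2$-quantities bounded by the Bregman integral; the contribution from $\jump{F(\nabla_h u_h^{\textit{cr}})}_S$ is additionally controlled by exploiting the CR midpoint identity $\int_S\jump{u_h^{\textit{cr}}}_S\,\textup{d}s=0$ at inner sides. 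The most delicate step is the first summand in the upper-bound splitting, where one must compare two distinct $\mathcal{S}^1_D(\mathcal{T}_h)$-approximations arising from different schemes; overcoming this forces a stand-alone reliability estimate for $\mathcal{J}_h^{\textit{av}} u_h^{\textit{cr}}$ in the $F$-metric, which is precisely where the shifted $N$-function node-averaging estimates established in Appendix~\ref{subsec:auxiliary} become indispensable.
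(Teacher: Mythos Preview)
Your proposal has two genuine gaps, one in each direction, and the paper proceeds quite differently in both places.

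\textbf{Upper bound $\eta_h^2\lesssim\eta_{\textit{res},h}^2$.} Your triangle-inequality splitting through $\mathcal{J}_h^{\textit{av}}u_h^{\textit{cr}}$ produces the term $\|F(\nabla u_h^{\textit{c}})-F(\nabla\mathcal{J}_h^{\textit{av}}u_h^{\textit{cr}})\|_{L^2}^2$, which you propose to bound via a triangle inequality through $u$ and ``an analogous reliability estimate for $\mathcal{J}_h^{\textit{av}}u_h^{\textit{cr}}$''. But no such estimate is available: $\mathcal{J}_h^{\textit{av}}u_h^{\textit{cr}}$ is not a minimizer of anything, so bounding $\rho_I^2(\mathcal{J}_h^{\textit{av}}u_h^{\textit{cr}},u)$ by $\eta_{\textit{res},h}^2(u_h^{\textit{c}})$ amounts to a medius-type comparison between the Crouzeix--Raviart and conforming errors, which is precisely as hard as the theorem itself. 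The Appendix~\ref{subsec:auxiliary} estimates only control $u_h^{\textit{cr}}-\mathcal{J}_h^{\textit{av}}u_h^{\textit{cr}}$ in terms of CR gradient jumps $\jump{\nabla_h u_h^{\textit{cr}}}_S$, and those jumps are \emph{not} a priori bounded by $\eta_{\textit{res},h}^2(u_h^{\textit{c}})$. The paper avoids this circularity entirely: it works with the monotonicity quantity $\int_\Omega(D\phi(\nabla u_h^{\textit{c}})-D\phi(\nabla_h u_h^{\textit{cr}}))\cdot\nabla_h e_h\,\textup{d}x$, $e_h:=u_h^{\textit{c}}-u_h^{\textit{cr}}$, and inserts $\mathcal{J}_h^{\textit{av}}e_h$ (not $\mathcal{J}_h^{\textit{av}}u_h^{\textit{cr}}$) together with the discrete Euler--Lagrange equations for $u_h^{\textit{c}}$, $u_h^{\textit{cr}}$, and the continuous one for $u$. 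This yields three terms $I_h^1,I_h^2,I_h^3$, each bounded by a piece of $\eta_{\textit{res},h}^2(u_h^{\textit{c}})$ plus $\varepsilon\sum_T\int_{\omega_T}\varphi_{|\nabla u_h^{\textit{c}}(T)|}(|\nabla_h e_h|)\,\textup{d}x$; the latter is equivalent to the left-hand side and is absorbed for small $\varepsilon$. Your splitting has no such absorbable structure.

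\textbf{Lower bound $\eta_{\textit{res},h}^2\lesssim\eta_h^2$.} Your decomposition $\jump{F(\nabla u_h^{\textit{c}})}_S=\jump{F(\nabla u_h^{\textit{c}})-F(\nabla_h u_h^{\textit{cr}})}_S+\jump{F(\nabla_h u_h^{\textit{cr}})}_S$ leaves the pure CR jump $\jump{F(\nabla_h u_h^{\textit{cr}})}_S$, which is not controlled by $\eta_h^2(u_h^{\textit{c}},z_h^{\textit{rt}})$: neither the Bregman part $\|F(\nabla u_h^{\textit{c}})-F(\nabla_h u_h^{\textit{cr}})\|_{L^2}^2$ nor the data part sees CR gradient jumps, and the midpoint identity $\int_S\jump{u_h^{\textit{cr}}}_S\,\textup{d}s=0$ gives no information about $\jump{\nabla_h u_h^{\textit{cr}}}_S$. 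The paper does not attempt this direction by hand at all; it simply combines the efficiency $c_{\textit{eff}}\,\eta_{\textit{res},h}^2(u_h^{\textit{c}})\leq\rho_I^2(u_h^{\textit{c}},u)$ from \cite{DK08} (i.e.~\eqref{eq:rel-eff}) with the primal-dual reliability $\rho_I^2(u_h^{\textit{c}},u)\leq\eta_h^2(u_h^{\textit{c}},z_h^{\textit{rt}})$ from Proposition~\ref{prop:post-processing_pDirichlet}.
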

	
		\begin{remark}\label{rmk:1}
			\begin{description}[noitemsep,topsep=1pt,labelwidth=\widthof{\textit{(iii)}},leftmargin=!,font=\normalfont\itshape]
					\item[(i)] \hspace*{-2.5mm}Theorem \hspace*{-0.1mm}\ref{thm:equivalences} \hspace*{-0.1mm}can \hspace*{-0.1mm}be \hspace*{-0.1mm}extended \hspace*{-0.1mm}to \hspace*{-0.1mm}the \hspace*{-0.1mm}case \hspace*{-0.1mm}${\phi\!=\!\varphi\!\circ\! \vert \!\cdot\!\vert\!\in\! C^0(\mathbb{R}^d)\!\cap\! C^2(\mathbb{R}^d\!\setminus\!\{0\})}$, where $\varphi:\mathbb{R}_{\ge 0}\to \mathbb{R}_{\ge 0}$ denotes an $N$--function (cf. Appendix \ref{subsec:auxiliary}) satisfying the $\Delta_2$--condition, the $\nabla_2$--condition,   $\varphi\in C^2(0,\infty)$ and $\varphi'(t)\sim t\varphi''(t)$ uniformly in $t\ge 0$\footnote{Here, we employ the notation $f\sim g $ for two (Lebesgue--)measurable functions $f,g:\Omega\to \mathbb{R}$, if there exists a constant $c>0$ such that $c^{-1}f\leq g\leq cf$ almost everywhere in $\Omega$.\vspace*{-7mm}}, cf. \cite[Assumption 1]{DK08}.
					
					\item[(ii)] If  \hspace*{-0.1mm}$f\hspace*{-0.15em}\in\hspace*{-0.15em} L^{p'}(\Omega)$, \hspace*{-0.1mm}$p\hspace*{-0.15em}\in\hspace*{-0.15em} (1,\infty)$, \hspace*{-0.1mm}in \hspace*{-0.1mm}Theorem \hspace*{-0.1mm}\ref{thm:equivalences}, \hspace*{-0.1mm}then \hspace*{-0.1mm}the \hspace*{-0.1mm}equivalence \hspace*{-0.1mm}\eqref{eq:equivalence} \hspace*{-0.1mm}can \hspace*{-0.1mm}be \hspace*{-0.1mm}extended~\hspace*{-0.1mm}by adding the oscillation term $\textup{osc}(u_h^{\textit{c}})\!\vcentcolon=\!\sum_{T\in \mathcal{T}_h}{\textup{osc}(u_h^{\textit{c}},T)}$,~where~for~all~${T\!\in\! \mathcal{T}_h}$
					\begin{align*}
						\textup{osc}(u_h^{\textit{c}},T)\vcentcolon=\int_T{\big(\vert \nabla u_h^{\textit{c}}\vert^{p-1}+h_T\vert f-\Pi_h f\vert\big)^{p'-2}h_T^2\vert f-\Pi_h f\vert^2\,\textup{d}x}.
					\end{align*}
			\end{description}
		\end{remark}
			
		\begin{proof}
				Introducing the $\mathcal{S}^1_D$--\,$\mathcal{S}^{1,\textit{cr}}_D$\!\!--error $e_h\!\vcentcolon=\!u_h^{\textit{c}}-u_h^{\textit{cr}}\!\in\! \mathcal{S}^{1,\textit{cr}}_D(\mathcal{T}_h)$,  uniformly in ${h\!>\!0}$,~we~get
				\begin{align}\begin{aligned}
					\int_{\Omega}{\big( D\phi(\nabla u_h^{\textit{c}})-D\phi(\nabla_h u_h^{\textit{cr}}) \big)\cdot\nabla_h e_h \,\textup{d}x}&=
					\int_{\Omega}{D\phi(\nabla u_h^{\textit{c}})\cdot\nabla_h( e_h- \mathcal{J}_h^{\textit{av}} e_h) \,\textup{d}x}	\\&\quad+\int_{\Omega}{ f_h\,(\mathcal{J}_h^{\textit{av}} e_h-e_h) \,\textup{d}x}
					\\&\quad+\int_{\Omega}{\big( D\phi(\nabla u_h^{\textit{c}})-D\phi(\nabla u) \big)\cdot\nabla \mathcal{J}_h^{\textit{av}} e_h \,\textup{d}x}
			\\&=\vcentcolon I_h^1+I_h^2+I_h^3.
				\end{aligned}\hspace*{-0.5cm}	\label{rem:residual.3}
				\end{align}
				Using that $\jump{D\phi(\nabla u_h^{\textit{c}})n\cdot( e_h- \mathcal{J}_h^{\textit{av}} e_h)}_S=\jump{D\phi(\nabla u_h^{\textit{c}})n}_S\cdot\{e_h- \mathcal{J}_h^{\textit{av}} e_h\}_S +\{D\phi(\nabla u_h^{\textit{c}})n\}_S\cdot \jump{e_h\!-\!\mathcal{J}_h^{\textit{av}} e_h}_S$ \hspace*{-0.2mm}on \hspace*{-0.2mm}$S$, $\int_S{\jump{e_h\!-\!\mathcal{J}_h^{\textit{av}} e_h}_S\,\textup{d}s}\!=\!0$ \hspace*{-0.2mm}and \hspace*{-0.2mm}$\{D\phi(\nabla u_h^{\textit{c}})\}_S\!=\!\textup{const}$ \hspace*{-0.2mm}on \hspace*{-0.2mm}$S$~\hspace*{-0.2mm}for~\hspace*{-0.2mm}all~\hspace*{-0.2mm}${S\!\in\! \mathcal{S}_h\hspace*{-0.2em}\setminus\hspace*{-0.2em}\partial\Omega}$, an element-wise integration-by-parts,  a discrete trace inequality~\mbox{\cite[Lem.~12.8]{EG21}}~and~\textit{(\hyperlink{AV.4}{AV.4})}, we find that
				\begin{align}
					\begin{aligned}
					I_h^1&=\sum_{S\in \mathcal{S}_h\setminus \partial\Omega}{\int_S{\jump{D\phi(\nabla u_h^{\textit{c}})n}_S\cdot\{e_h- \mathcal{J}_h^{\textit{av}} e_h\}_S\,\textup{d}s}}
					\\&\leq\sum_{S\in \mathcal{S}_h\setminus \partial\Omega}{\vert \jump{D\phi(\nabla u_h^{\textit{c}})n}_S\vert \int_S{\vert e_h- \mathcal{J}_h^{\textit{av}} e_h\vert \,\textup{d}s}}
						\\&\leq c_{\textup{tr}}\sum_{S\in \mathcal{S}_h\setminus \partial\Omega}{\vert \jump{D\phi(\nabla u_h^{\textit{c}})n}_S\vert  \sum_{T\in \mathcal{T}_h;S\subseteq \partial T}{h_T^{-1}\int_T{\vert e_h- \mathcal{J}_h^{\textit{av}} e_h\vert \,\textup{d}s}}} \\&
					\leq \tilde c_{\textit{av}}c_{\textup{tr}} \sum_{S\in \mathcal{S}_h\setminus \partial\Omega}{ \sum_{T\in \mathcal{T}_h;S\subseteq \partial T}{\int_{\omega_T}{\vert \jump{D\phi(\nabla u_h^{\textit{c}})n}_S\vert\vert \nabla_ he_h\vert\,\textup{d}x}}}.
				\end{aligned}	\label{rem:residual.4}
				\end{align}
				Then, proceeding as for \cite[(3.8)--(3.10)]{DK08}, up to obvious adjustments, in particular,~\mbox{using}~for every $T\in \mathcal{T}_h$, in the patch $\omega_T$, the $\varepsilon$\hspace*{-0.1mm}--\hspace*{-0.1mm}Young \hspace*{-0.1mm}inequality \hspace*{-0.1mm}(cf. \hspace*{-0.1mm}\eqref{eq:eps-young}) \hspace*{-0.1mm}for \hspace*{-0.1mm}the \hspace*{-0.1mm}shifted \hspace*{-0.1mm}$N$--\hspace*{-0.1mm}function \hspace*{-0.1mm}$\smash{\varphi_{\vert\nabla u_h^{\textit{c}}(T)\vert}:\mathbb{R}_{\ge 0}\to \mathbb{R}_{\ge 0}}$, cf. \mbox{Appendix}~\ref{subsec:auxiliary} or \cite[Remark 5]{DK08}, defined by 
				\begin{align*}
					\varphi_{\vert\nabla u_h^{\textit{c}}(T)\vert}(t)\vcentcolon=\int_0^t{(\vert\nabla u_h^{\textit{c}}(T)\vert+s )^{p-2} s\,\textup{d}s}\quad\textup{ for all }t\ge 0,
				\end{align*}
				and $(\varphi_{\vert \nabla u_h^{\textit{c}}(T)\vert})^*(\vert \jump{D\phi(\nabla u_h^{\textit{c}})n}_S\vert)\sim\vert \jump{F(\nabla u_h^{\textit{c}})}_S\vert^2$ on $S$ for~all~${S\in \mathcal{S}_h\setminus\partial\Omega}$ with $S\subseteq \partial  T$ (cf.~\hspace*{-0.1mm}\cite[\hspace*{-0.1mm}Cor. \!6]{DK08}), \hspace*{-0.1mm}where \hspace*{-0.1mm}we \hspace*{-0.1mm}for \hspace*{-0.1mm}any \hspace*{-0.1mm}$T\!\in\! \mathcal{T}_h$ \hspace*{-0.1mm}write \hspace*{-0.1mm}$\nabla u_h^{\textit{c}}(T)$ \hspace*{-0.1mm}to \hspace*{-0.1mm}indicate~\hspace*{-0.1mm}that~\hspace*{-0.1mm}the~\hspace*{-0.1mm}shift~\hspace*{-0.1mm}on~\hspace*{-0.1mm}the~\hspace*{-0.1mm}whole patch $\omega_T$ depends on the value of $\nabla u_h^{\textit{c}}$ on the triangle $T$~and~where~${(\varphi_{\vert \nabla u_h^{\textit{c}}(T)\vert})^*\hspace*{-0.1em}:\hspace*{-0.1em}\mathbb{R}_{\ge 0}\hspace*{-0.1em}\to\hspace*{-0.1em} \mathbb{R}_{\ge 0}}$  denotes the Fenchel conjugate to $\varphi_{\vert\nabla u_h^{\textit{c}}(T)\vert}:\mathbb{R}_{\ge 0}\to\mathbb{R}_{\ge 0}$, for any $\varepsilon>0$, we conclude~that
				\begin{align}
					I_h^1&\leq   c_{\textit{av}}c_{\textup{tr}}\!\! \sum_{S\in \mathcal{S}_h\setminus \partial\Omega}{\sum_{T\in \mathcal{T}_h;S\subseteq \partial T}{\int_{\omega_T}{\!c_\varepsilon\,(\varphi_{\vert \nabla u_h^{\textit{c}}(T)\vert})^*(\vert \jump{D\phi(\nabla u_h^{\textit{c}})n}_S\vert)\!+\!\varepsilon\,\varphi_{\vert\nabla u_h^{\textit{c}}(T)\vert}(\vert \nabla_ he_h\vert)\,\textup{d}x}}}\notag
						\\[-1mm]
						&
						\leq   c_{\textit{av}}c_{\textup{tr}} c_\varepsilon\,\sum_{S\in \mathcal{S}_h\setminus \partial\Omega}{\eta_{J,S}^2(u^{c}_h)}+ \tilde c_{\textit{av}}c_{\textup{tr}} \varepsilon \sum_{T\in \mathcal{T}_h}{ \int_{\omega_T}{\varphi_{\vert\nabla u_h^{\textit{c}}(T)\vert}(\vert \nabla_ he_h\vert)\,\textup{d}x}}.
					\label{rem:residual.5}
				\end{align}
				Using \hspace*{-0.1mm}the \hspace*{-0.1mm}$\varepsilon$--\!Young \hspace*{-0.1mm}inequality \hspace*{-0.1mm}(cf. \hspace*{-0.5mm}\eqref{eq:eps-young}), \hspace*{-0.1mm}$(\vert \nabla u^{c}_h\vert^{p-1}+h_T\vert f_h\vert)^{p'-2}h_T^2\vert f_h\vert^2\!\sim\! (\varphi_{\vert \nabla u_h^{\textit{c}}\vert})^*(h_T\vert f_h\vert)$ in \hspace*{-0.1mm}$T$ \hspace*{-0.1mm}for \hspace*{-0.1mm}all \hspace*{-0.1mm}$T\in \mathcal{T}_h$  \hspace*{-0.1mm}(uniformly \hspace*{-0.1mm}in \hspace*{-0.1mm}$h>0$, \hspace*{-0.1mm}cf. \hspace*{-0.1mm}\cite[\hspace*{-0.1mm}(2.6)]{DK08}) \hspace*{-0.1mm}and \hspace*{-0.1mm}Corollary~\hspace*{-0.1mm}\ref{cor:n-function},~\hspace*{-0.1mm}for~\hspace*{-0.1mm}any~\hspace*{-0.1mm}${\varepsilon>0}$,~\hspace*{-0.1mm}we~\hspace*{-0.1mm}get
				\begin{align}
					\begin{aligned}
						I_h^2&\leq \sum_{T\in \mathcal{T}_h}{\int_T{c_\varepsilon\,(\varphi_{\vert \nabla u_h^{\textit{c}}\vert})^*(h_T\vert f_h\vert)+\varepsilon\,\varphi_{\vert \nabla u_h^{\textit{c}}\vert }\big(h_T^{-1}\vert e_h-\mathcal{J}_h^{\textit{av}}e_h\vert  \big)\,\textup{d}x}}\\[-1mm]&\leq 
						 c_\varepsilon\,\sum_{T\in \mathcal{T}_h}{\eta_{E,T}^2(u^{c}_h)}+ \tilde c_{\textit{av}}\varepsilon\sum_{T\in \mathcal{T}_h}{\int_{\omega_T}{\varphi_{\vert \nabla u_h^{\textit{c}}(T)\vert }(\vert \nabla_he_h\vert  )\,\textup{d}x}}.
					\end{aligned}	\label{rem:residual.6}
				\end{align}
				The  $\varepsilon$--\!Young inequality (cf. \eqref{eq:eps-young}), ${(\varphi_{\vert\nabla u_h^{\textit{c}}\vert })^*(\vert D\phi(\nabla u_h^{\textit{c}})\hspace*{-0.1em}-\hspace*{-0.1em}D\phi(\nabla u)\vert )\!\sim\! \vert F(\nabla u_h^{\textit{c}})\hspace*{-0.1em}-\hspace*{-0.1em} F(\nabla u)\vert^2}$ in \hspace*{-0.1mm}$T$ \hspace*{-0.1mm}for \hspace*{-0.1mm}all \hspace*{-0.1mm}$T\hspace*{-0.1em}\in\hspace*{-0.1em} \mathcal{T}_h$  \hspace*{-0.1mm}(uniformly \hspace*{-0.1mm}in \hspace*{-0.1mm}$h\hspace*{-0.1em}>\hspace*{-0.1em}0$, \hspace*{-0.1mm}cf. \hspace*{-0.1mm}\cite[\hspace*{-0.1mm}Cor. \!6]{DK08}), 
				\hspace*{-0.1mm}and \hspace*{-0.1mm}Corollary~\hspace*{-0.1mm}\ref{cor:n-function},~\hspace*{-0.1mm}for~\hspace*{-0.1mm}any~\hspace*{-0.1mm}${\varepsilon\hspace*{-0.1em}>\hspace*{-0.1em}0}$,~\hspace*{-0.1mm}yield
				\begin{align}
					\begin{aligned}
					I_h^3&\leq \sum_{T\in \mathcal{T}_h}{\int_T{c_\varepsilon\,(\varphi_{\vert\nabla u_h^{\textit{c}}\vert })^*(\vert D\phi(\nabla u_h^{\textit{c}})-D\phi(\nabla u)\vert )+\varepsilon\,\varphi_{\vert\nabla u_h^{\textit{c}}\vert }(\vert \nabla \mathcal{J}_h^{\textit{av}}e_h\vert )\,\textup{d}x}}\\[-1mm]&\leq 
					c_\varepsilon\,\rho_I^2(u_h^{\textit{c}},u)
					+ \tilde c_{\textit{av}}\varepsilon\sum_{T\in \mathcal{T}_h}{\int_{\omega_T}{\varphi_{\vert\nabla u_h^{\textit{c}}(T)\vert }(\vert \nabla_h e_h\vert )\,\textup{d}x}}.
				\end{aligned}	\label{rem:residual.7}
				\end{align}
				Proceeding as in \cite[p. 9 \& 10]{DK08}, we obtain a constant $c>0$ such that
				\begin{align}
					\hspace*{-1mm}\sum_{T\in \mathcal{T}_h}{\int_{\omega_T}{\varphi_{\vert \nabla u_h^{\textit{c}}(T)\vert }(\vert \nabla_he_h\vert  )\,\textup{d}x}}\leq c \sum_{T\in \mathcal{T}_h}{\int_T{\varphi_{\vert \nabla u_h^{\textit{c}}\vert }(\vert \nabla_he_h\vert  )\,\textup{d}x}}+c\!\sum_{S\in \mathcal{S}_h\setminus \partial\Omega}{\eta_{J,S}^2(u^{c}_h)}.	\hspace*{-1mm}\label{rem:residual.8}
				\end{align}
				Then, combining \eqref{eq:rel-eff} and \eqref{rem:residual.3}--\eqref{rem:residual.8}, for any $\varepsilon>0$, we conclude that
				\begin{align*}
					\begin{aligned}
				\int_{\Omega}{\big( D\phi(\nabla u_h^{\textit{c}})-D\phi(\nabla_h u_h^{\textit{cr}}) \big)\cdot\nabla_he_h \,\textup{d}x}\leq c_\varepsilon\,\eta_{\textit{res},h}^2(u^{\textit{c}}_h)+\varepsilon\sum_{T\in \mathcal{T}_h}{\int_T{\varphi_{\vert \nabla u_h^{\textit{c}}\vert }(\vert \nabla_he_h\vert  )\,\textup{d}x}}.
			\end{aligned}
				\end{align*}
				Resorting the reconstruction formula \eqref{eq:dirichlet_discrete_reconstruction} and \cite[Lemma 3]{DK08}, we obtain a constant $c>0$  such that 
				for~every~${T\in \mathcal{T}_h}$, we deduce that
				\begin{align*}
						\int_T{\big(D\phi^*(z_h^{\textit{rt}})-D\phi^*(\Pi_hz_h^{\textit{rt}})\big)\cdot(z_h^{\textit{rt}}-\Pi_hz_h^{\textit{rt}})\,\textup{d}x}&\leq c
						\int_T{(\varphi_{\vert \nabla_hu_h^{\textit{cr}}\vert})^*(h_T\vert f_h\vert  )\,\textup{d}x}.
				\end{align*}
				Then, a change of shift (cf. \cite[Corollary 28]{DK08}),  for every $\varepsilon>0$, provides a constant   $c_\varepsilon>0$ such that   for~every~${T\in \mathcal{T}_h}$, it holds
				\begin{align*}
					\begin{aligned}
						\int_T{(\varphi_{\vert \nabla_hu_h^{\textit{cr}}\vert})^*(h_T\vert f_h\vert  )\,\textup{d}x}
						\leq c_\varepsilon\,\eta_{E,T}^2(u^{\textit{c}}_h)+\varepsilon \int_T{\varphi_{\vert \nabla u_h^{\textit{c}}\vert }(\vert \nabla_he_h\vert  )\,\textup{d}x}.
					\end{aligned}
				\end{align*}
				Thanks to $\varphi_{\vert \nabla u_h^{\textit{c}}\vert }(\vert \nabla_he_h\vert  )\sim ( D\phi(\nabla u_h^{\textit{c}})-D\phi(\nabla_h u_h^{\textit{cr}}))\cdot(\nabla u_h^{\textit{c}}-\nabla_h u_h^{\textit{cr}})$ in $T$ for all $T\in \mathcal{T}_h$ (uniformly in $h>0$), for $\varepsilon>0$ sufficiently small, we obtain a constant $\tilde c_{\textit{eq}}>0$ such that
				\begin{align}
					\begin{aligned}
						&\int_{\Omega}{\big( D\phi(\nabla u_h^{\textit{c}})-D\phi(\nabla_h u_h^{\textit{cr}}) \big)\cdot(\nabla u_h^{\textit{c}}-\nabla_h u_h^{\textit{cr}}) \,\textup{d}x}\\&\quad+
							\int_{\Omega}{\big(D\phi^*(z_h^{\textit{rt}})-D\phi^*(\Pi_hz_h^{\textit{rt}})\big)\cdot(z_h^{\textit{rt}}-\Pi_hz_h^{\textit{rt}})\,\textup{d}x}
						\leq \tilde c_{\textit{eq}}\,\eta_{\textit{res},h}^2(u^{\textit{c}}_h).
					\end{aligned}\label{rem:residual.2.2}
				\end{align}	
				From \eqref{rem:residual.2.2}, Proposition \ref{prop:post-processing_pDirichlet}, Corollary \ref{cor:post-pocess} (i) \& (iii) and \eqref{eq:rel-eff}~we,~in~turn,~conclude~that $\rho_I^2(u_h^{\textit{c}},u)\leq \eta_h^2(u_h^{\textit{c}},z_h^{\textit{rt}})\leq  \tilde c_{\textit{eq}}\,\eta_{\textit{res},h}^2(u^{\textit{c}}_h)\leq \tilde c_{\textit{eq}}\,c_{\textit{eff}}\,\rho_I^2(u_h^{\textit{c}},u)$, which implies \eqref{eq:equivalence}.
			\end{proof}
		
			\begin{corollary}[Gobal reliability and efficiency]\label{cor:releff}
			Let $\phi\vcentcolon=\smash{\frac{1}{p}}\vert \cdot\vert^p\in C^1(\mathbb{R}^d)$, $p\in (1,\infty)$, and $f=f_h\in \mathcal{L}^0(\mathcal{T}_h)$. Then, there exist constants $ c_{\textit{rel}},c_{\textit{eff}}>0$ such that 
			\begin{align}
				\begin{aligned}
					\smash{c_{\textit{eff}}\, \rho_I^2(u_h^{\textit{c}},u)\leq \eta_h^2(u_h^{\textit{c}},z_h^{\textit{rt}})\leq c_{\textit{rel}}\,\rho_I^2(u_h^{\textit{c}},u).}
				\end{aligned}\label{eq:releff}
			\end{align}
		\end{corollary}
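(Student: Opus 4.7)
The plan is to chain two two-sided estimates that are already available at this point of the paper: the equivalence between the primal-dual estimator $\eta_h^2(u_h^{\textit{c}},z_h^{\textit{rt}})$ and the residual-type estimator $\eta_{\textit{res},h}^2(u_h^{\textit{c}})$ established in Theorem~\ref{thm:equivalences}, and the reliability-efficiency bound \eqref{eq:rel-eff} for $\eta_{\textit{res},h}^2(u_h^{\textit{c}})$ taken from \cite{DK08}.

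For the upper bound on $\eta_h^2(u_h^{\textit{c}},z_h^{\textit{rt}})$ (i.e., $\eta_h^2\leq c_{\textit{rel}}\,\rho_I^2$), I would start from Theorem~\ref{thm:equivalences}, which gives $\eta_h^2(u_h^{\textit{c}},z_h^{\textit{rt}})\leq c_{\textit{eq}}\,\eta_{\textit{res},h}^2(u_h^{\textit{c}})$, and then invoke the efficiency part of \eqref{eq:rel-eff} in the form $\eta_{\textit{res},h}^2(u_h^{\textit{c}})\leq (c_{\textit{eff}})^{-1}\rho_I^2(u_h^{\textit{c}},u)$ (with $c_{\textit{eff}}$ here denoting the constant appearing in \eqref{eq:rel-eff}). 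Composing the two gives the desired inequality with constant $c_{\textit{eq}}/c_{\textit{eff}}$.

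For the lower bound (i.e., $c_{\textit{eff}}\,\rho_I^2\leq \eta_h^2$), the cleanest route is to apply Proposition~\ref{prop:post-processing_pDirichlet} directly, which already yields $\rho_I^2(u_h^{\textit{c}},u)\leq \eta_h^2(u_h^{\textit{c}},z_h^{\textit{rt}})$, so one may take the constant equal to $1$. Alternatively, one can again go through the residual estimator: $\rho_I^2\leq c_{\textit{rel}}\,\eta_{\textit{res},h}^2\leq c_{\textit{rel}}\,c_{\textit{eq}}\,\eta_h^2$, using the reliability part of \eqref{eq:rel-eff} followed by Theorem~\ref{thm:equivalences}.

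There is essentially no obstacle: all the heavy lifting (shifted $N$-function calculus, properties of $\mathcal{J}_h^{\textit{av}}$, use of the reconstruction formula \eqref{eq:dirichlet_discrete_reconstruction}, shift-change lemmas) has already been carried out in the proof of Theorem~\ref{thm:equivalences} and in the cited bound \eqref{eq:rel-eff}. Corollary~\ref{cor:releff} is therefore a one-line consequence, and the only care needed is bookkeeping of the constants to reconcile the different placements of $c_{\textit{rel}}$ and $c_{\textit{eff}}$ in \eqref{eq:releff} versus \eqref{eq:rel-eff}.
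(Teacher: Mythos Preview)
Your proposal is correct and matches the paper's approach exactly: the paper does not give a separate proof for Corollary~\ref{cor:releff}, because the chain $\rho_I^2(u_h^{\textit{c}},u)\leq \eta_h^2(u_h^{\textit{c}},z_h^{\textit{rt}})\leq \tilde c_{\textit{eq}}\,\eta_{\textit{res},h}^2(u^{\textit{c}}_h)\leq \tilde c_{\textit{eq}}\,c_{\textit{eff}}^{-1}\,\rho_I^2(u_h^{\textit{c}},u)$ is already established in the last line of the proof of Theorem~\ref{thm:equivalences}. Your two ingredients---Proposition~\ref{prop:post-processing_pDirichlet} for the lower bound and Theorem~\ref{thm:equivalences} combined with \eqref{eq:rel-eff} for the upper bound---are precisely what appears there.
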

	
		\begin{remark}\label{rmk:2}
			\begin{description}[noitemsep,topsep=1pt,labelwidth=\widthof{\textit{(iii)}},leftmargin=!,font=\normalfont\itshape]
					\item[(i)] The extensions described in Remark \ref{rmk:1} equally apply to Corollary~\ref{cor:releff}.
					
					\item[(ii)] Since \hspace*{-0.1mm}we \hspace*{-0.1mm}have \hspace*{-0.1mm}used \hspace*{-0.1mm}global \hspace*{-0.1mm}arguments, \hspace*{-0.1mm}e.g., \hspace*{-0.1mm}discrete \hspace*{-0.1mm}and \hspace*{-0.1mm}continuous \hspace*{-0.1mm}Euler--Lagrange~\hspace*{-0.1mm}equa-tions, cf. \eqref{rem:residual.3}, and element-wise integration-by-parts, cf. \eqref{rem:residual.4}, it remains~unclear whether the primal-dual a posteriori error estimator $\eta_h^2(u_h^{\textit{c}},z_h^{\textit{rt}})$~and~the~\mbox{residual}~type a posteriori error estimator $\eta_{\textit{res},h}^2(u_h^{\textit{c}})$ are also locally equivalent,~i.e.,~if~there~exists a constant $c_{\textit{eq}}\hspace*{-0.17em}>\hspace*{-0.17em}0$,  such that $\smash{c_{\textit{eq}}^{-1}\, \eta_{h,T}^2(u_h^{\textit{c}},z_h^{\textit{rt}})\hspace*{-0.17em}\leq\hspace*{-0.17em} \eta_{\textit{res},T}^2(u^{\textit{c}}_h)\hspace*{-0.17em}\leq\hspace*{-0.17em} c_{\textit{eq}}\,\eta_{h,T}^2(u_h^{\textit{c}},z_h^{\textit{rt}})}$~for~all~${T\hspace*{-0.17em}\in\hspace*{-0.17em} \mathcal{T}_h}$.
						
					\item[(iii)] As, according to (ii), the local equivalence of  the primal-dual a posteriori~error estimator $\eta_h^2(u_h^{\textit{c}},z_h^{\textit{rt}})$ and the residual type a posteriori error~estimator~$\eta_{\textit{res},h}^2(u_h^{\textit{c}})$ is still open, we cannot refer to \cite{DK08} to infer the convergence of~the~adaptive~algorithm,~cf.~Algorithm~\ref{alg:afem}. In fact, in  \cite{DK08}, it was decisively used that the~residual~type~a~posteriori error estimator $\eta_{\textit{res},h}^2(u_h^{\textit{c}})$  is locally efficient, i.e., there~exists~a~constant~${c_{\textit{eff}}>0}$~such~that for every~$T\in\mathcal{T}_h$, it holds $\smash{c_{\textit{eff}}\,\eta_{\textit{res},T}^2(u^{\textit{c}}_h)\leq \|F(\nabla u^{\textit{c}}_h)-F(\nabla u)\|_{\smash{L^2(T;\mathbb{R}^d)}}^2}$.~This,~in~turn, suggests  to use residual type a posteriori error estimators for adaptive mesh refinement and primal-dual a posteriori~error estimators for error estimation.\vspace*{-0.5mm}
			\end{description}
		\end{remark}
	
	\subsection{A degenerate minimization problem: An optimal design problem}

		\qquad If $p=2$ and $\phi\vcentcolon=\psi\circ \vert\cdot \vert\in C^1(\mathbb{R}^d)$, where $\psi\in C^1(\mathbb{R}_{\ge 0})$  is prescribed~by~the~initial~value ${\psi(0)= 0}$
		and  the derivative  $\psi'\in C^0(\mathbb{R}_{\ge 0})$, for all $t\ge0$ defined by
	\begin{align}
		\psi'(t)\vcentcolon=\begin{cases}
			\mu_2 t&\quad \text{ for }t\in [0,t_1]\\
			\mu_2 t_1&\quad \text{ for }t\in [t_1,t_2]\\
			\mu_1 t&\quad \text{ for }t\in [t_2,+\infty)
		\end{cases},\label{eq:psi}
	\end{align}
	 $0\hspace*{-0.05em}<\hspace*{-0.05em}t_1\hspace*{-0.05em}<\hspace*{-0.05em}t_2$ and $0\hspace*{-0.05em}<\hspace*{-0.05em}\mu_1\hspace*{-0.05em}<\hspace*{-0.05em}\mu_2$ are given parameters such that $t_1\mu_2\hspace*{-0.05em}=\hspace*{-0.05em}t_2\mu_1$,~then~the~\mbox{non-linear} Dirichlet problem \eqref{eq:dirichlet_primal} reduces to the optimal design problem for maximal~torsion~\mbox{stiffness} of \hspace*{-0.1mm}an \hspace*{-0.1mm}infinite \hspace*{-0.1mm}bar \hspace*{-0.1mm}of \hspace*{-0.1mm}a \hspace*{-0.1mm}given \hspace*{-0.1mm}geometry \hspace*{-0.1mm}and \hspace*{-0.1mm}unknown \hspace*{-0.1mm}distribution \hspace*{-0.1mm}of \hspace*{-0.1mm}two~\hspace*{-0.1mm}materials~\hspace*{-0.1mm}of~\hspace*{-0.1mm}\mbox{prescribed} amounts, \hspace*{-0.2mm}a \hspace*{-0.2mm}classical \hspace*{-0.2mm}example \hspace*{-0.2mm}from \hspace*{-0.2mm}topology \hspace*{-0.2mm}optimization, \hspace*{-0.2mm}cf. \hspace*{-1mm}\cite{Che00}. \hspace*{-1mm}The \hspace*{-0.2mm}optimal~\hspace*{-0.2mm}design~\hspace*{-0.2mm}\mbox{problem} \hspace*{-0.1mm}is \hspace*{-0.1mm}a \hspace*{-0.1mm}degenerate \hspace*{-0.1mm}convex \hspace*{-0.1mm}minimization \hspace*{-0.1mm}problem, \hspace*{-0.1mm}i.e., \hspace*{-0.1mm}in \hspace*{-0.1mm}contrast~\hspace*{-0.1mm}to~\hspace*{-0.1mm}the~\hspace*{-0.1mm}\mbox{$p$--\hspace*{-0.1mm}Dirichlet}~\hspace*{-0.1mm}problem, the defining functional \hspace*{-0.1mm}\eqref{eq:dirichlet_primal} \hspace*{-0.1mm}is \hspace*{-0.1mm}not~\hspace*{-0.1mm}strongly~\hspace*{-0.1mm}convex, \hspace*{-0.1mm}but \hspace*{-0.1mm}only~\hspace*{-0.1mm}co-coercive~\hspace*{-0.1mm}since~\hspace*{-0.1mm}the~\hspace*{-0.1mm}\mbox{energy}~\hspace*{-0.1mm}density $\phi\vcentcolon=\psi\circ \vert\cdot \vert\in C^1(\mathbb{R}^d)$, cf. \cite[Proposition 4.2]{BC08}, for every $a,b\in \mathbb{R}^d$~satisfies
	\begin{align}
		\smash{(2\mu_2)^{-1}\vert D\phi(a)-D\phi(b)\vert^2\leq \phi(a)-\phi(b)-D\phi(a)\cdot(a-b).}\label{co-coercive1}
	\end{align}
	If  $u\!\in \!\smash{W^{1,2}_D(\Omega)}$ is minimal for \eqref{eq:dirichlet_primal}, from the finite-dimensional co-coercivity~property~\eqref{co-coercive1}, for every $v\in \smash{W^{1,2}_D(\Omega)}$, we have the following infinite-dimensional~co-coercivity~property
	\begin{align}
		\smash{(2\mu_2)^{-1}\|D\phi(\nabla v)-D\phi(\nabla u)\|_{L^2(\Omega;\mathbb{R}^d)}^2\leq I(v)-I(u).}\label{co-coercive2}
	\end{align}
	It is well-known that minimizers $u\in \smash{W^{1,2}_D(\Omega)}$ of \eqref{eq:dirichlet_primal} are (possibly) non-unique, while from \eqref{co-coercive2} we directly conclude that
	$D\phi(\nabla u)\in L^2(\Omega;\mathbb{R}^d)$ is unique. The co-coercivity~property motivates to define a measure ${\rho_I^2:\smash{W^{1,2}_D(\Omega)}\times \smash{W^{1,2}_D(\Omega)}\to \mathbb{R}_{\ge 0}}$ for the co-coercivity~of~\eqref{eq:dirichlet_primal} by 
	$\rho_I^2(v,w)\vcentcolon=
	\|D\phi(\nabla v)-D\phi(\nabla w)\|_{\smash{L^2(\Omega;\mathbb{R}^d)}}^2$ for all $v,w\in \smash{W^{1,2}_D(\Omega)}$. However,~since~\mbox{explicit} representations
	\hspace*{-0.1mm}of \hspace*{-0.1mm}minimizers \hspace*{-0.1mm}$u\hspace*{-0.1em}\in\hspace*{-0.1em}\smash{W^{1,2}_D(\Omega)}$ \hspace*{-0.1mm}of \hspace*{-0.1mm}\eqref{eq:dirichlet_primal} \hspace*{-0.1mm}for \hspace*{-0.1mm}simple \hspace*{-0.1mm}data, \hspace*{-0.1mm}e.g., \hspace*{-0.1mm}$f\hspace*{-0.1em}=\hspace*{-0.1em}1$,~\hspace*{-0.1mm}are~\hspace*{-0.1mm}rare,~\hspace*{-0.1mm}in~\hspace*{-0.1mm}our experiments, we consider
	$\rho_I^2\!:\!\smash{W^{1,2}_D(\Omega)}\times \smash{W^{1,2}_D(\Omega)}\!\to\! \mathbb{R}_{\ge 0}$,~for~every~${v,w\!\in\! \smash{W^{1,2}_D(\Omega)}}$~defined~by\vspace*{-0.5mm}
	\begin{align}
	\smash{\rho_I^2(v,w)\vcentcolon=I(v)-I(w)},\label{co-coercive3}
	\end{align}
	which has the particular advantage that the exact value $I(u)$ can be approximated resorting to Aitken's  $\delta^2$--process, cf. \cite{Ait26}.
	\newpage		

	\section{Numerical Experiments}\label{sec:numerical_experiments}

	\qquad In this section, we verify our theoretical findings via numerical experiments.~More~preci-sely, we 
	\hspace*{-0.1mm}present\hspace*{-0.1mm} numerical \hspace*{-0.1mm}results \hspace*{-0.1mm}for \hspace*{-0.1mm}the \hspace*{-0.1mm}approximation \hspace*{-0.1mm}of \hspace*{-0.1mm} the \hspace*{-0.1mm}$p$--Dirichlet~\hspace*{-0.1mm}problem~\hspace*{-0.1mm}and~\hspace*{-0.1mm}an~\hspace*{-0.1mm}op-timal design problem by deploying adaptive mesh~refinements~on~the~basis~of~the~\mbox{trapezoidal} primal-dual a posteriori error estimators $(\widehat{\eta}_{h,T}^2(\tilde{u}_h, z_h^{\textit{rt}}))_{T\in \mathcal{T}_h}$, cf. \eqref{eq:hat2}.
	
	Before \hspace*{-0.15mm}we \hspace*{-0.15mm}present \hspace*{-0.15mm}the \hspace*{-0.15mm}computational \hspace*{-0.15mm}experiments, \hspace*{-0.15mm}we \hspace*{-0.15mm}briefly \hspace*{-0.15mm}outline \hspace*{-0.15mm}the \hspace*{-0.15mm}general~\hspace*{-0.15mm}details~\hspace*{-0.15mm}of \hspace*{-0.15mm}our  \hspace*{-0.15mm}implementations. \hspace*{-0.15mm}In \hspace*{-0.15mm}general, \hspace*{-0.15mm}we \hspace*{-0.15mm}follow \hspace*{-0.15mm}the~\hspace*{-0.15mm}adaptive~\hspace*{-0.15mm}\mbox{algorithm},~\hspace*{-0.15mm}cf.~\hspace*{-0.15mm}\cite{Vee02,Ste07,DK08,CKNS08}:
	\begin{algorithm}[AFEM]\label{alg:afem}
		Let $\varepsilon_{\textup{STOP}}>0$, $\theta\in (0,1)$ and  $\mathcal{T}_0$ a conforming initial  triangu-lation of $\Omega$. Then, for $k\ge 0$:
	\begin{description}[noitemsep,topsep=1pt,labelwidth=\widthof{\textit{('Estimate')}},leftmargin=!,font=\normalfont\itshape]
		\item[('Solve')]\hypertarget{Solve}{}
		Compute both a conforming approximation  ${\tilde{u}_k\in \smash{\mathcal{S}^1_D(\mathcal{T}_k)}}$ and a minimizer $u_k^{\textit{cr}}\in \smash{\mathcal{S}^{1,\textit{cr}}_D(\mathcal{T}_k)}$ of \eqref{discrete_primal}. Post-process $u_k^{\textit{cr}}\in \smash{\mathcal{S}^{1,\textit{cr}}_D(\mathcal{T}_k)}$ to obtain~a~\mbox{maximizer} $z_k^{\textit{rt}}\in \smash{\mathcal{R}T^0_N(\mathcal{T}_k)}$ of \eqref{discrete_dual}.
		\item[('Estimate')]\hypertarget{Estimate}{} Compute the refinement indicators $(\widehat{\eta}^2_{k,T}(\tilde{u}_k,z_k^{\textit{rt}}))_{T\in \mathcal{T}_k}$. If ${\widehat{\eta}^2_k(\tilde{u}_k,z_k^{\textit{rt}})\hspace*{-0.1em}\leq \hspace*{-0.1em}\varepsilon_{\textup{STOP}}}$, then \textup{STOP}.
		\item[('Mark')]\hypertarget{Mark}{}  Choose a minimal (in terms of cardinality) subset $\mathcal{M}_k\subseteq\mathcal{T}_k$ such that
		\begin{align*}
			\sum_{T\in \mathcal{M}_k}{\widehat{\eta}_{k,T}^2(\tilde{u}_k, z_k^{\textit{rt}})}\ge \theta^2\sum_{T\in \mathcal{T}_k}{\widehat{\eta}_{k,T}^2(\tilde{u}_k, z_k^{\textit{rt}})}.
		\end{align*}
		\item[('Refine')]\hypertarget{Refine}{} Perform \hspace*{-0.1mm}a \hspace*{-0.1mm}(minimal) \hspace*{-0.1mm}conforming \hspace*{-0.1mm}refinement \hspace*{-0.1mm}of \hspace*{-0.1mm}$\mathcal{T}_k$ \hspace*{-0.1mm}to \hspace*{-0.1mm}obtain \hspace*{-0.1mm}$\mathcal{T}_{k+1}$~\hspace*{-0.1mm}such~\hspace*{-0.1mm}that~\hspace*{-0.1mm}each $T\in \mathcal{M}_k$  is refined in $\mathcal{T}_{k+1}$, i.e., each $T\in \mathcal{M}_k$ and each~of~its~sides~contains~a node of $\mathcal{T}_{k+1}$ in its interior. Increase~$k\to k+1$~and~continue~with~('Solve').
	\end{description}
	\end{algorithm}

	\begin{remark}
			\begin{description}[noitemsep,topsep=1pt,labelwidth=\widthof{\textit{(iii)}},leftmargin=!,font=\normalfont\itshape]
				\item[(i)] The computation of a conforming approximation  $\tilde{u}_k\in\smash{ \mathcal{S}^1_D(\mathcal{T}_k)}$ and a minimizer $u_k^{\textit{cr}}\in \smash{\mathcal{S}^{1,\textit{cr}}_D(\mathcal{T}_k)}$ of \eqref{discrete_primal} in (\hyperlink{Solve}{'Solve'}) is adjusted to the respective problem.
				\item[(ii)] The reconstruction of a maximizer $z_k^{\textit{rt}}\in \smash{\mathcal{R}T^0_N(\mathcal{T}_k)}$ of \eqref{discrete_dual} in (\hyperlink{Solve}{'Solve'}) is based on explicit representation formulas and does not entail further computational costs.
				\item[(iii)] If not otherwise specified, we employ the parameter $\theta=\smash{\frac{1}{2}}$ in (\hyperlink{Estimate}{'Estimate'}).
				\item[(iv)] To find the minimal (in terms of cardinality) set $\mathcal{M}_k\subseteq \mathcal{T}_k$ in (\hyperlink{Mark}{'Mark'}), we~deploy~the  \textit{D\"orfler} marking strategy, cf. \cite{Doe96}.
				\item[(v)] The \hspace*{-0.1mm}(minimal) \hspace*{-0.1mm}conforming \hspace*{-0.1mm}refinement \hspace*{-0.1mm}of \hspace*{-0.1mm}$\mathcal{T}_k$ \hspace*{-0.1mm}with \hspace*{-0.1mm}respect \hspace*{-0.1mm}to \hspace*{-0.1mm}$\mathcal{M}_k$~\hspace*{-0.1mm}in~\hspace*{-0.1mm}(\hyperlink{Refine}{'Refine'})~\hspace*{-0.1mm}is~\hspace*{-0.1mm}\mbox{obtained} deploying the \textit{red}--\textit{green}--\textit{blue}--refinement algorithm.
				
			\end{description}
	\end{remark}
	
	All \hspace*{-0.1mm}experiments \hspace*{-0.1mm}were \hspace*{-0.1mm}conducted \hspace*{-0.1mm}using \hspace*{-0.1mm}the \hspace*{-0.1mm}finite \hspace*{-0.1mm}element \hspace*{-0.1mm}software \hspace*{-0.1mm}package~\hspace*{-0.1mm}\mbox{\textsf{FEniCS}},~\hspace*{-0.1mm}cf.~\hspace*{-0.1mm}\cite{LW10}. All graphics are generated using the \textsf{Matplotlib} library, cf. \cite{Hun07}.
	
	\subsection{$p$--Dirichlet problem}\label{subsec:expDirichlet}
	
	\qquad \!We \hspace*{-0.2mm}examine \hspace*{-0.15mm}the \hspace*{-0.2mm}$p$--\hspace*{-0.15mm}Dirichlet \hspace*{-0.2mm}problem \hspace*{-0.2mm}with \hspace*{-0.2mm}prescribed \hspace*{-0.2mm}in-homogeneous \hspace*{-0.2mm}Dirichlet~\hspace*{-0.2mm}\mbox{boundary} data on an $L$--shaped domain. More precisely, we let $\Omega\hspace*{-0.15em}\vcentcolon=\hspace*{-0.15em} \left(-1,1\right)^2 \setminus ([0,1]\hspace*{-0.1em}\times\hspace*{-0.1em} [-1,0])$,~${\Gamma_D \hspace*{-0.15em}\vcentcolon= \hspace*{-0.15em}\partial\Omega}$, $\Gamma_N\hspace*{-0.1em}\vcentcolon=\hspace*{-0.1em}\emptyset$, $\phi\hspace*{-0.1em}\vcentcolon=\smash{\hspace*{-0.1em}\frac{1}{p}\vert \cdot\vert^p\hspace*{-0.1em}\in\hspace*{-0.1em} C^1(\mathbb{R}^2)}$, $p\hspace*{-0.1em}\in\hspace*{-0.1em}(1,\infty)$, and prescribe in-homogeneous Dirichlet~boundary data $u_D = u|_{\partial\Omega}\in L^p(\partial\Omega)$ through restriction of the unique exact solution $u\in W^{1,p}(\Omega)$, in polar coordinates, for every $(r,\theta)^\top\in (0,\infty)\times (0,2\pi)$ defined by 
	\begin{align*}
		\smash{u(r, \theta) \vcentcolon= r^\delta \sin(\delta \theta),}
	\end{align*}
	to the boundary $\partial\Omega$. The particular choice of $\delta>0$ will be specified later in dependence of the choice of $p\in (1,\infty)$. Then, the corresponding non-smooth right-hand side $f\in L^{p'}(\Omega)$, in polar coordinates, is for every $\smash{(r,\theta)^\top}\in (0,\infty)\times (0,2\pi)$ defined by
	\begin{align*}
		 \smash{f(r, \theta) \vcentcolon= -(2-p)\delta^{p-1}(1 - \delta)r^{(\delta-1)(p-1)-1} \sin(\delta \theta).}
	\end{align*}
	For $p\in (1,\infty)$, we let $\delta = \frac{6}{5}(1-\frac{1}{p})>0$. Then, we have that $u \in  W^{1,p}(\Omega)$,~but~${u \notin W^{2,p}(\Omega)}$. 
	
	\hspace*{-5.5mm}The \hspace*{-0.1mm}initial \hspace*{-0.1mm}triangulation \hspace*{-0.1mm}$\mathcal{T}_0$ \hspace*{-0.1mm}consists \hspace*{-0.1mm}of \hspace*{-0.1mm}96 \hspace*{-0.1mm}elements \hspace*{-0.1mm}and \hspace*{-0.1mm}65 \hspace*{-0.1mm}vertices.~\hspace*{-0.1mm}We~\hspace*{-0.1mm}use~\hspace*{-0.1mm}${f_k\!\vcentcolon=\!\Pi_kf\!\in\! \mathcal{L}^0(\mathcal{T}_k)}$ for all $k=0,\dots,19$.
	In what follows, for every $k=0,\dots,19$, we denote by
	 $u_k^{\textit{c}} \in  \smash{\mathcal{S}^1_D(\mathcal{T}_k)}$ the minimizer of $\smash{I_k^{\textit{c}}\hspace*{-0.12em}\vcentcolon=\hspace*{-0.12em}I|_{ \mathcal{S}^1_D(\mathcal{T}_k)}\hspace*{-0.12em}:\hspace*{-0.12em} \mathcal{S}^1_D(\mathcal{T}_k)\hspace*{-0.12em}\to\hspace*{-0.12em} \mathbb{R}}$ and~by~$u_k^{\textit{cr}}\hspace*{-0.12em} \in \hspace*{-0.12em} \smash{\mathcal{S}^{1,\textit{cr}}_D(\mathcal{T}_k)}$~the~\mbox{minimizer}~of~\eqref{eq:dirichlet_discrete_primal}.~Both minimizers are computed 
	using the   Newton~line~search algorithm of \textsf{PETSc}, cf. \cite{PETSc19}, with an absolute tolerance of about $\tau_{\textit{abs}}=1\textrm{e}{-}8$ and a relative tolerance~of~about~${\tau_{\textit{rel}}=1\textrm{e}{-}10}$. The linear system emerging~in each Newton step is solved using the generalized minimal residual method \mbox{(GMRES)}. 
	Globally convergent  semi-implicit discretizations~of~the~respective~$L^2$--gradient flows yield comparable results, but terminate~significantly~slower.~Then,~for~\mbox{every} ${k= 0,\dots,19}$,~via~post-processing $u_k^{\textit{cr}} \in  \smash{\mathcal{S}^{1,\textit{cr}}_D(\mathcal{T}_k)}$, we obtain a maximizer $z_k^{\textit{rt}}\in \smash{\mathcal{R}T^0(\mathcal{T}_k)}$~of \eqref{eq:dirichlet_discrete_dual}~\hspace*{-0.1mm}by~\hspace*{-0.1mm}resorting~\hspace*{-0.1mm}to~\hspace*{-0.1mm}the \hspace*{-0.1mm}reconstruction \hspace*{-0.1mm}formula \hspace*{-0.1mm}\eqref{eq:dirichlet_discrete_reconstruction}. 
	\hspace*{-0.1mm}In~\hspace*{-0.1mm}Figure~\hspace*{-0.1mm}\ref{fig:pDirichletPD},~\hspace*{-0.1mm}for~\hspace*{-0.1mm}every~\hspace*{-0.1mm}${k=0,\dots,19}$~\hspace*{-0.1mm}and $\tilde{u}_{h_k}\!\vcentcolon=\!u_k^{\textit{c}}\!\in\! \smash{\mathcal{S}^1_D(\mathcal{T}_k)}$, \hspace*{-0.2mm}the \hspace*{-0.2mm}square \hspace*{-0.2mm}root \hspace*{-0.2mm}of \hspace*{-0.2mm}the \hspace*{-0.2mm}trapezoidal \hspace*{-0.2mm}primal-dual \hspace*{-0.2mm}a \hspace*{-0.2mm}posteriori~\hspace*{-0.2mm}error~\hspace*{-0.2mm}\mbox{estimator}
	\begin{align}
		\begin{aligned}
		\widehat{\eta}^2_k(\tilde{u}_{h_k},z_k^{\textit{rt}})&=\int_{\Omega}{\phi(\nabla \tilde{u}_{h_k})-\Pi_{h_k} z_k^{\textit{rt}}\cdot(\nabla \tilde{u}_{h_k}-\nabla_{h_k} u_k^{\textit{cr}})+\phi(\nabla_{h_k} \tilde{u}_{h_k})\,\textup{d}x}
		\\&\quad+\int_{\Omega}{\widehat{I}_{h_k}\big[\phi^*(z_k^{\textit{rt}})\big]-\phi^*(\Pi_{h_k}z_k^{\textit{rt}})\,\textup{d}x},
	\end{aligned}\label{eq:pd-estimator}
	\end{align}
	 and square root of the error on the left-hand side of the estimate in Proposition \ref{prop:post-processing_pDirichlet},~i.e.,~~~~~~~~
	\begin{align}
		\rho^2_I(u,\tilde{u}_{h_k})=\|F (\nabla u) - F (\nabla \tilde{u}_{h_k})\|_{L^2(\Omega;\mathbb{R}^2)}^2,\label{eq:error-quantity}
	\end{align}
	are plotted versus the number of degrees of freedom $N_k \vcentcolon= \textup{card}(\mathcal{N}_{h_k})$~in~a~\mbox{$\log\log$--plot}. In it, one \hspace*{-0.15mm}clearly \hspace*{-0.15mm}observes \hspace*{-0.15mm}that \hspace*{-0.15mm}mesh \hspace*{-0.15mm}adaptivity \hspace*{-0.15mm}yields \hspace*{-0.15mm}the \hspace*{-0.15mm}quasi-optimal
	\hspace*{-0.15mm}convergence~\hspace*{-0.15mm}rate~\hspace*{-0.15mm}$\smash{h_k\! \sim\! N_k^{\smash{-\frac{1}{2}}}}$. In particular, for every $k\hspace*{-0.1em}=\hspace*{-0.1em}0,\dots,19$, the trapezoidal primal-dual~a~posteriori~error~\mbox{estimator} $\widehat{\eta}^2_k(u_k^{\textit{c}},z_k^{\textit{rt}})$ \hspace*{-0.1mm}defines \hspace*{-0.1mm}a \hspace*{-0.1mm}reliable \hspace*{-0.1mm}upper \hspace*{-0.1mm}bound \hspace*{-0.1mm}for \hspace*{-0.1mm}the \hspace*{-0.1mm}error \hspace*{-0.1mm}quantity \hspace*{-0.1mm}$\rho^2_I(u,u_k^{\textit{c}})$.~\hspace*{-0.1mm}Also~\hspace*{-0.1mm}note~\hspace*{-0.1mm}that~\hspace*{-0.1mm}data approximation~terms such as, e.g., in Remark \ref{rem:general_pd_estimator0}~(v.b)~are~disregarded~in~all~experiments.~~~~~
		\begin{figure}[H]\vspace*{-0.5cm}
				\hspace*{-0.25cm}\includegraphics[width=14cm]{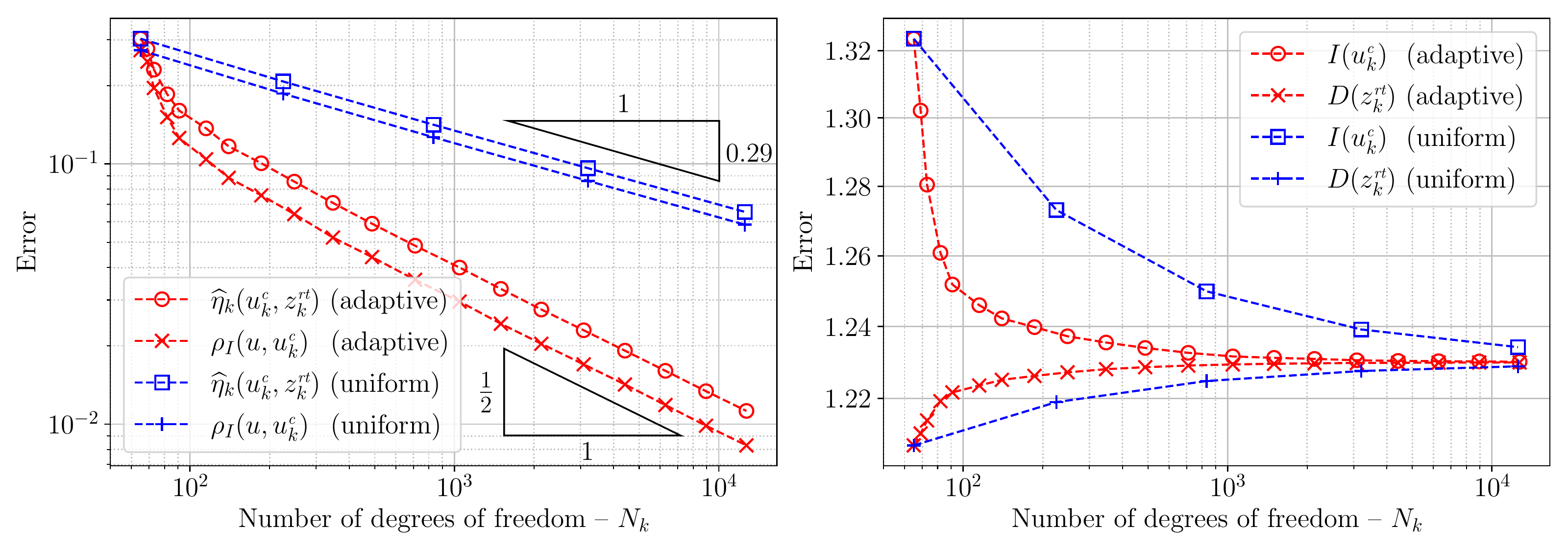}\vspace*{-0.1cm}
			\hspace*{-0.25cm}\includegraphics[width=14cm]{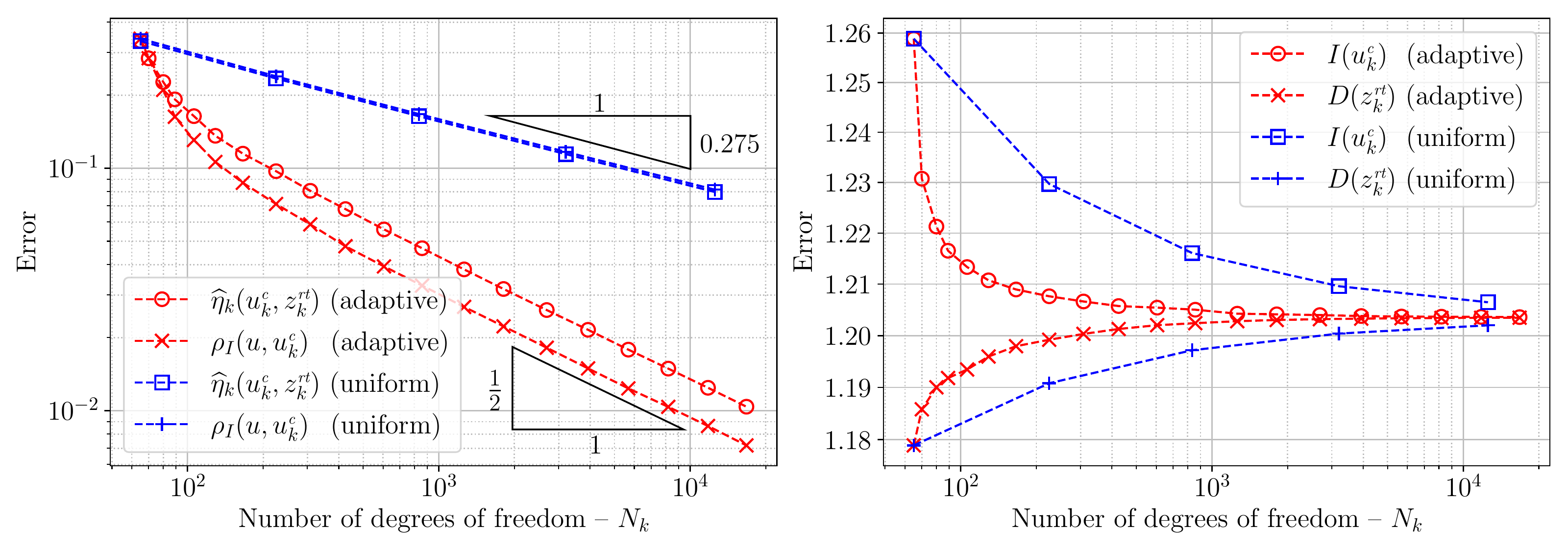}\vspace*{-0.25cm}
		\caption{The trapezoidal primal-dual a posteriori error estimators $\widehat{\eta}^2_k(u_k^{\textit{c}},z_k^{\textit{rt}})$,~cf.~\eqref{eq:pd-estimator}, and \hspace*{-0.25mm}the \hspace*{-0.25mm}error \hspace*{-0.25mm}quantities  $\rho^2_I(u,u_k^{\textit{c}})$, \hspace*{-0.25mm}cf. \!\eqref{eq:error-quantity}, \hspace*{-0.25mm}(left) \hspace*{-0.25mm}as \hspace*{-0.25mm}well \hspace*{-0.25mm}as \hspace*{-0.25mm}the \hspace*{-0.25mm}primal~\hspace*{-0.25mm}energies~\hspace*{-0.25mm}$I(u_k^{\textit{c}})$,~\hspace*{-0.25mm}cf.~\!\eqref{eq:dirichlet_primal}, and dual energies 
			  $D (z_k^{\textit{rt}})$, \eqref{eq:dirichlet_dual}, (right) for uniform and adaptive mesh refinement~for $k=0,\dots,4$ and $k=0,\dots,19$, resp. Top: $p$--Dirichlet problem with $p\hspace*{-0.1em}=\hspace*{-0.1em}1.6$. Bottom:
			$p$--Dirichlet~problem~with~${p\hspace*{-0.1em}=\hspace*{-0.1em}1.2}$.\vspace*{-1cm}}
		\label{fig:pDirichletPD}
	\end{figure}
	
	On the right-hand side of Figure \ref{fig:pDirichletPD}, we displayed the energy curves for
	$I(u_k^{\textit{c}})$~and~$D (z_k^{\textit{rt}})$, $k=0,\dots,19$, resp.~The~primal and dual energies converge to the optimal value $I(u) = D(z)$ and the primal-dual gap $I(u_k^{\textit{c}})-D (z_k^{\textit{rt}})$, $k=0,\dots,19$, converges~to~zero~as~${N_k \to \infty}$, and even at a linear rate, when local mesh refinement is used. 

	In \hspace*{-0.15mm}Figure \hspace*{-0.15mm}\ref{fig:pDirichletResidual}, \hspace*{-0.15mm}for \hspace*{-0.15mm}every \hspace*{-0.15mm}$k\!=\!0,\dots,19$, \hspace*{-0.15mm}we \hspace*{-0.15mm}compare \hspace*{-0.15mm}the \hspace*{-0.15mm}trapezoidal \hspace*{-0.15mm}primal-dual~a~\hspace*{-0.15mm}\mbox{posteriori}~er-ror \hspace*{-0.15mm}estimator  \hspace*{-0.15mm}$\widehat{\eta}^2_k(u_k^{\textit{c}},z_k^{\textit{rt}})$ \hspace*{-0.15mm}with \hspace*{-0.15mm}the \hspace*{-0.15mm}residual \hspace*{-0.15mm}a \hspace*{-0.15mm}posteriori \hspace*{-0.15mm}error \hspace*{-0.15mm}estimator~\hspace*{-0.15mm}${\eta_{\textit{res},k}^2(u_k^{\textit{c}})\!\vcentcolon=\!\eta_{\textit{res},h_k}^2(u_k^{\textit{c}})}$, for the $p$--Dirichlet problem with
	in-homogeneous Dirichlet boundary~data~on the $L$--shaped domain $\Omega$ for $p =  1.6$
	and $p = 1.2$. In it, 
	one~observes~that both estimators decay at the
	same quasi-optimal rate $\mathcal{O}(N^{\smash{-\frac{1}{2}}}_k)$. The experiments confirm that~$\widehat{\eta}^2_k(u_k^{\textit{c}},z_k^{\textit{rt}})$~and~$\smash{\eta_{\textit{res},k}^2(u_k^{\textit{c}})}$,~up to an overestimation of $\smash{\eta_{\textit{res},k}^2(u_k^{\textit{c}})}$, behave identically supporting the findings~of~\mbox{Theorem}~\ref{thm:equivalences}.~~~

		\begin{figure}[H]\vspace*{-0.25cm}
		\hspace*{-0.25cm}\includegraphics[width=14cm]{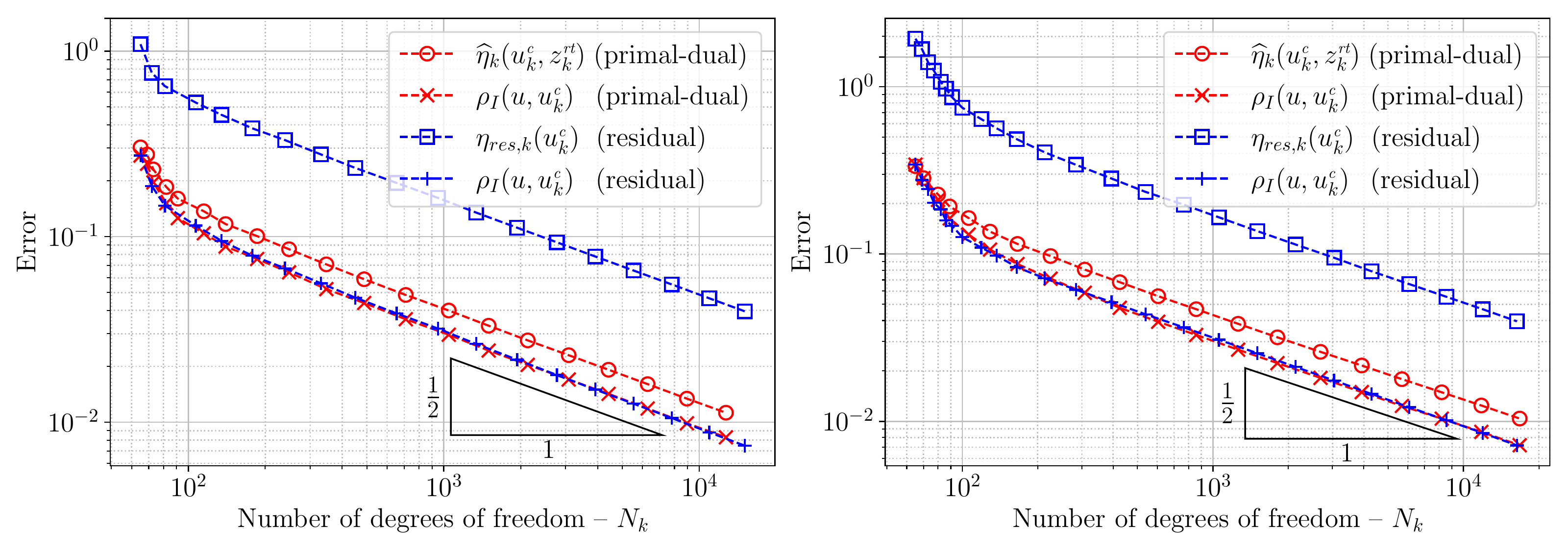}\vspace*{-0.25cm}
		\caption{\hspace*{-0.25mm}The \hspace*{-0.25mm}trapezoidal \hspace*{-0.25mm}primal-dual \hspace*{-0.25mm}a \hspace*{-0.25mm}posteriori \hspace*{-0.25mm}error \hspace*{-0.25mm}estimators \hspace*{-0.25mm}$\widehat{\eta}^2_h(u_k^{\textit{c}},z_k^{\textit{rt}})$,~\hspace*{-0.25mm}cf.~\hspace*{-0.25mm}\eqref{eq:pd-estimator},~\hspace*{-0.25mm}and  residual type a posteriori  error estimators $\eta^2_{\textit{res},k}(u_k^{\textit{c}})$, cf.~\eqref{rem:residual.1.1},~for~$k=0,\dots,19$. Left: $p$--Dirichlet problem~with~${p=1.6}$. Right:
			$p$--Dirichlet problem~with~${p=1.2}$.\vspace*{-0.25cm}}
		\label{fig:pDirichletResidual}
	\end{figure}

	In Figure \ref{fig:pDirichletAveraging}, for every $k=0,\dots,19$ and $\tilde{u}_{h_k}\vcentcolon=\smash{\mathcal{J}_{h_k}^{\textit{av}}}u_k^{\textit{cr}}\in \mathcal{S}^1_D(\mathcal{T}_k)$, the square~root~of~the primal-dual a posteriori error estimator $\widehat{\eta}^2_k(\smash{\mathcal{J}_{h_k}^{\textit{av}}}u_k^{\textit{cr}},z_k^{\textit{rt}})$,  cf. \eqref{eq:pd-estimator},  
	and~of~the~error~quantity $\rho^2_I(	\hspace*{-0.2mm}u,	\hspace*{-0.2mm}\smash{\mathcal{J}_{h_k}^{\textit{av}}}u_k^{\textit{cr}}	\hspace*{-0.2mm})$,  \hspace*{-0.3mm}cf. \hspace*{-1mm}\eqref{eq:error-quantity}, 
	\hspace*{-0.3mm}are \hspace*{-0.2mm}plotted \hspace*{-0.2mm}versus \hspace*{-0.2mm}the \hspace*{-0.2mm}number \hspace*{-0.2mm}of \hspace*{-0.2mm}degrees \hspace*{-0.2mm}of \hspace*{-0.2mm}freedom \hspace*{-0.2mm}$N_k$~\hspace*{-0.2mm}in~\hspace*{-0.2mm}a~\hspace*{-0.2mm}\mbox{$\log\hspace*{-0.5mm}\log$\hspace*{-0.2mm}--\hspace*{-0.2mm}plot}. In~it, one observes that mesh adaptivity yields the quasi-optimal
	convergence~rate~$\smash{h_k \!\sim\! N_k^{\smash{-\frac{1}{2}}}\!}$ and that for every $k=0,\dots,19$, the trapezoidal primal-dual a posteriori error~\mbox{estimator} $\widehat{\eta}^2_k(\smash{\mathcal{J}_{h_k}^{\textit{av}}}u_k^{\textit{cr}},z_k^{\textit{rt}})$ \hspace*{-0.1mm}defines \hspace*{-0.1mm}a \hspace*{-0.1mm}reliable \hspace*{-0.1mm}upper \hspace*{-0.1mm}bound \hspace*{-0.1mm}for \hspace*{-0.1mm}the \hspace*{-0.1mm}error \hspace*{-0.1mm}quantity \hspace*{-0.1mm}$\rho^2_I(u,\smash{\mathcal{J}_{h_k}^{\textit{av}}}u_k^{\textit{cr}})$.~\hspace*{-0.1mm}\mbox{Moreover}, on the right-hand side of Figure \ref{fig:pDirichletAveraging}, we displayed the energy curves for
	$I(\smash{\mathcal{J}_{h_k}^{\textit{av}}}u_k^{\textit{cr}})$~and~$D (z_k^{\textit{rt}})$, $k=0,\dots,19$,~resp., whose distance likewise  converges to zero as $N_k \to \infty$.
	The experiments justify to employ $\tilde{u}_{h_k}=\smash{\mathcal{J}_{h_k}^{\textit{av}}}u_k^{\textit{cr}}\in \mathcal{S}^1_D(\mathcal{T}_k)$ instead of ${\tilde{u}_{h_k}=\smash{u_k^{\textit{c}}}\in \mathcal{S}^1_D(\mathcal{T}_k)}$   in Algorithm  \ref{alg:afem}.  Then, only one non-linear  problem  per~iteration~has~to~be~solved~in~(\hyperlink{Solve}{'Solve'}).

	\begin{figure}[H]\vspace*{-0.25cm}
		\hspace*{-0.25cm}\includegraphics[width=14cm]{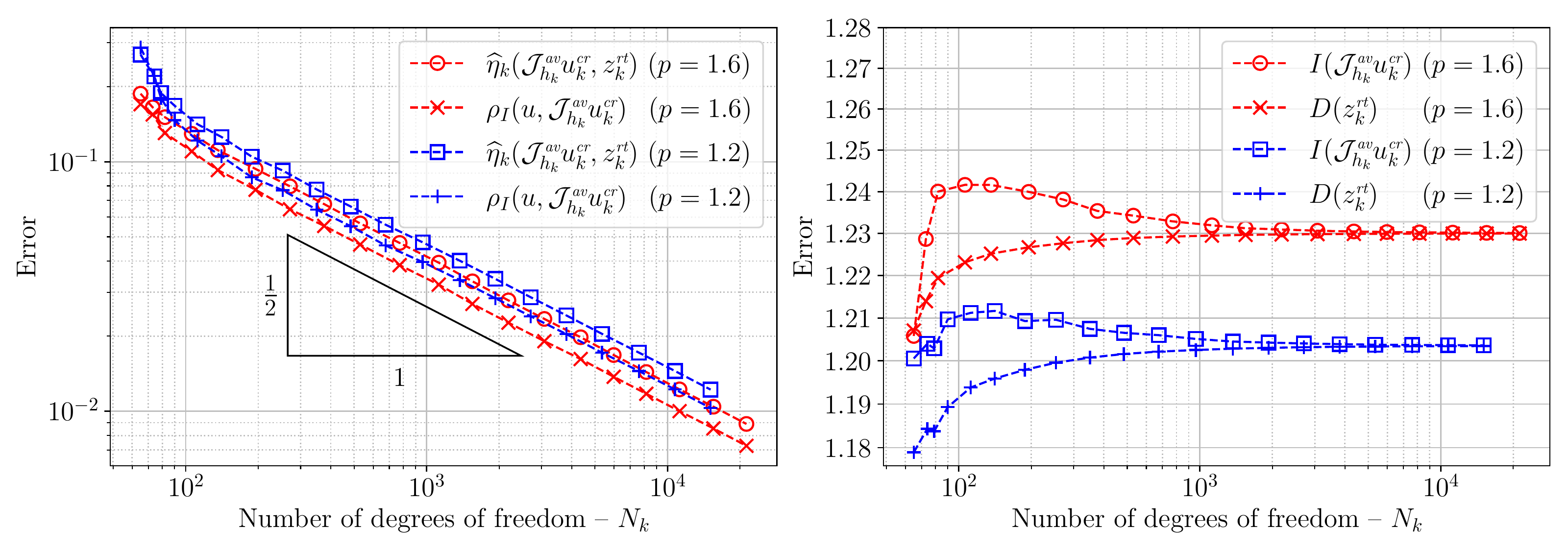}\vspace*{-0.25cm}
		\caption{\hspace*{-0.25mm}The \hspace*{-0.25mm}trapezoidal \hspace*{-0.25mm}primal-dual~\hspace*{-0.25mm}a~\hspace*{-0.25mm}posteriori~\hspace*{-0.25mm}error~\hspace*{-0.25mm}estimators~\hspace*{-0.25mm}$\widehat{\eta}^2_k(\smash{\mathcal{J}_{h_k}^{\textit{av}}}u_k^{\textit{cr}},z_k^{\textit{rt}})$,~\hspace*{-0.25mm}cf.~\hspace*{-0.25mm}\eqref{eq:pd-estimator}, and the error quantities  $\rho^2_I(u,\smash{\mathcal{J}_{h_k}^{\textit{av}}}u_k^{\textit{cr}})$, cf. \eqref{eq:error-quantity}, (left) and the primal~energies~$I(\smash{\mathcal{J}_{h_k}^{\textit{av}}}u_k^{\textit{cr}})$, cf. \eqref{eq:dirichlet_primal}, and the  dual energies $D (z_k^{\textit{rt}})$, cf. \eqref{eq:dirichlet_dual}, (right) for~adaptive~mesh~\mbox{refinement} for $k=0,\dots,19$, $p=1.6$ and $p=1.2$, resp.\vspace*{-0.75cm}}
		\label{fig:pDirichletAveraging}
	\end{figure}

	\subsection{Optimal design problem}\label{subsec:expOptimalDesign}
	
	\qquad \!We \hspace*{-0.2mm}examine \hspace*{-0.2mm}an \hspace*{-0.2mm}optimal \hspace*{-0.2mm}design \hspace*{-0.2mm}problem \hspace*{-0.2mm}with \hspace*{-0.2mm}prescribed \hspace*{-0.2mm}homogeneous~\hspace*{-0.2mm}Dirichlet~\hspace*{-0.2mm}\mbox{boundary} \hspace*{-0.2mm}data \hspace*{-0.2mm}on \hspace*{-0.2mm}an \hspace*{-0.2mm}$L$\hspace*{-0.2mm}--shaped \hspace*{-0.2mm}domain. \hspace*{-0.2mm}More \hspace*{-0.2mm}precisely, \hspace*{-0.2mm}we \hspace*{-0.2mm}let \hspace*{-0.2mm}${\Omega\!\vcentcolon=\!(-1,1)^2\!\setminus\!([0,1]\!\times\! [-1,0])}$,~\hspace*{-0.1mm}${\Gamma_D\!\vcentcolon=\!\partial\Omega}$, $\Gamma_N\vcentcolon=\emptyset$, $\phi\vcentcolon=\psi\circ\vert \cdot\vert C^1(\mathbb{R}^2)$, where $\psi\in C^1(\mathbb{R}_{\ge 0})$ is defined by~\eqref{eq:psi}~for~${\mu_1=1}$,~${\mu_2=2}$, $t_1\hspace*{-0.1em}=\hspace*{-0.1em}\sqrt{2\lambda \mu_1/\mu_2}\hspace*{-0.1em}=\hspace*{-0.1em}\sqrt{\lambda}$, $t_2\hspace*{-0.1em}=\hspace*{-0.1em}\sqrt{2\lambda \mu_2/\mu_1}\hspace*{-0.1em}=\hspace*{-0.1em}2\sqrt{\lambda}$, and $\lambda\hspace*{-0.1em} =\hspace*{-0.1em}0.0145$ as~in~\cite{BC08},~and~${f\hspace*{-0.1em}=\hspace*{-0.1em}1}$. We~use~the same initial triangulation $\mathcal{T}_0$ as in Subsection \ref{subsec:expDirichlet} and exploit~that~${f_k\vcentcolon=\Pi_{h_k}f=f\in \mathcal{L}^0(\mathcal{T}_k)}$ for all $k\hspace*{-0.12em}=\hspace*{-0.12em}0,\dots,19$. \hspace*{-0.2em}Apart from that, for every $k\hspace*{-0.12em}=\hspace*{-0.12em}0,\dots,19$,~we~again~denote~by~${u_k^{\textit{c}}\hspace*{-0.12em} \in\hspace*{-0.12em}  \mathcal{S}^1_D(\mathcal{T}_k)}$ the minimizer of $\smash{I_k^{\textit{c}}\hspace*{-0.12em}\vcentcolon=\hspace*{-0.12em}I|_{ \mathcal{S}^1_D(\mathcal{T}_k)}\hspace*{-0.12em}:\hspace*{-0.12em} \mathcal{S}^1_D(\mathcal{T}_k)\hspace*{-0.12em}\to\hspace*{-0.12em} \mathbb{R}}$ and by $u_k^{\textit{cr}}\hspace*{-0.12em} \in \hspace*{-0.12em} \mathcal{S}^{1,\textit{cr}}_D(\mathcal{T}_k)$~the~minimizer~of~\eqref{eq:dirichlet_discrete_primal}. Both minimizers are computed  resorting to a semi-implicit discretization~of~the respective $L^2$--gradient flows,  cf. \cite[Sec. 5]{Bar21}, with stopping criterion $\varepsilon_{\textit{stop}}=\smash{h}^{\smash{2}}/20$.
	Since these schemes are unconditionally strongly stable, cf. \cite[Prop. 5.2]{Bar21}, we employ~the~fixed~\mbox{step-size}~${\tau=1}$. In Figure \ref{fig:OptimalDesign}, 
  for every $k\hspace*{-0.05em}=\hspace*{-0.05em}0,\dots,19$, $\tilde{u}_{h_k}\hspace*{-0.05em}=\hspace*{-0.05em}u_k^{\textit{c}}\hspace*{-0.05em}\in\hspace*{-0.05em} \smash{\mathcal{S}^{1,\textit{cr}}_D(\mathcal{T}_k)}$ and~a~maximizer~of~${z_k^{\textit{rt}}\hspace*{-0.05em}\in\hspace*{-0.05em} \smash{\mathcal{R}T^0(\mathcal{T}_k)}}$ of \eqref{eq:dirichlet_discrete_dual} obtained  using the reconstruction formula \eqref{eq:dirichlet_discrete_reconstruction}, the square root of the trapezoidal primal-dual a posteriori error estimator~\eqref{eq:pd-estimator}~and~of~${\rho_I^2(u,\tilde{u}_{h_k})=I(\tilde{u}_{h_k})-I(u)}$,~cf.~\eqref{co-coercive3},
	where the exact value $I(u)\approx -0.0745503$ is \mbox{approximated}~\mbox{using}~Aitken's~\mbox{$\delta^2$--process},~cf.~\cite{Ait26}, are plotted versus the number of degrees of freedom $N_k$~in~a~\mbox{$\log\log$--plot}.~In~it, one  observes that mesh adaptivity yields the  quasi-optimal
   convergence~rate~$\smash{h_k \sim N_k^{\smash{-\frac{1}{2}}}}$.~In~\mbox{particular}, for every $k=0,\dots,19$, the trapezoidal primal-dual~a~posteriori~error~\mbox{estimator} $\widehat{\eta}^2_k(u_k^{\textit{c}},z_k^{\textit{rt}})$ defines a reliable upper bound for the error quantity $\rho^2_I(u,u_k^{\textit{c}})$.
	On the right-hand side of Figure~\ref{fig:OptimalDesign}, we displayed the energy curves for
	$I(u_k^{\textit{c}})$~and~$D (z_k^{\textit{rt}})$, $k=0,\dots,19$,~resp., \hspace*{-0.1mm}whose \hspace*{-0.1mm}distance  \hspace*{-0.1mm}converges \hspace*{-0.1mm}to \hspace*{-0.1mm}zero \hspace*{-0.1mm}as \hspace*{-0.1mm}$N_k \hspace*{-0.1em}\to\hspace*{-0.1em} \infty$.
	\hspace*{-0.1mm}In~\hspace*{-0.1mm}\mbox{agreement} \hspace*{-0.1mm}with \hspace*{-0.1mm}experimental~\hspace*{-0.1mm}results~\hspace*{-0.1mm}in~\hspace*{-0.1mm}\cite{CL15},~\hspace*{-0.1mm}our~\hspace*{-0.1mm}\mbox{error} estimator \hspace*{-0.1mm}avoids \hspace*{-0.1mm}a \hspace*{-0.1mm}systematic \hspace*{-0.1mm}reliability-efficiency \hspace*{-0.1mm}gap \hspace*{-0.1mm}that \hspace*{-0.1mm}arises~\hspace*{-0.1mm}in~\hspace*{-0.1mm}\mbox{residual-type}~\hspace*{-0.1mm}estimates.
		\begin{figure}[H]\vspace*{-0.25cm}
		\hspace*{-0.25cm}\includegraphics[width=14cm]{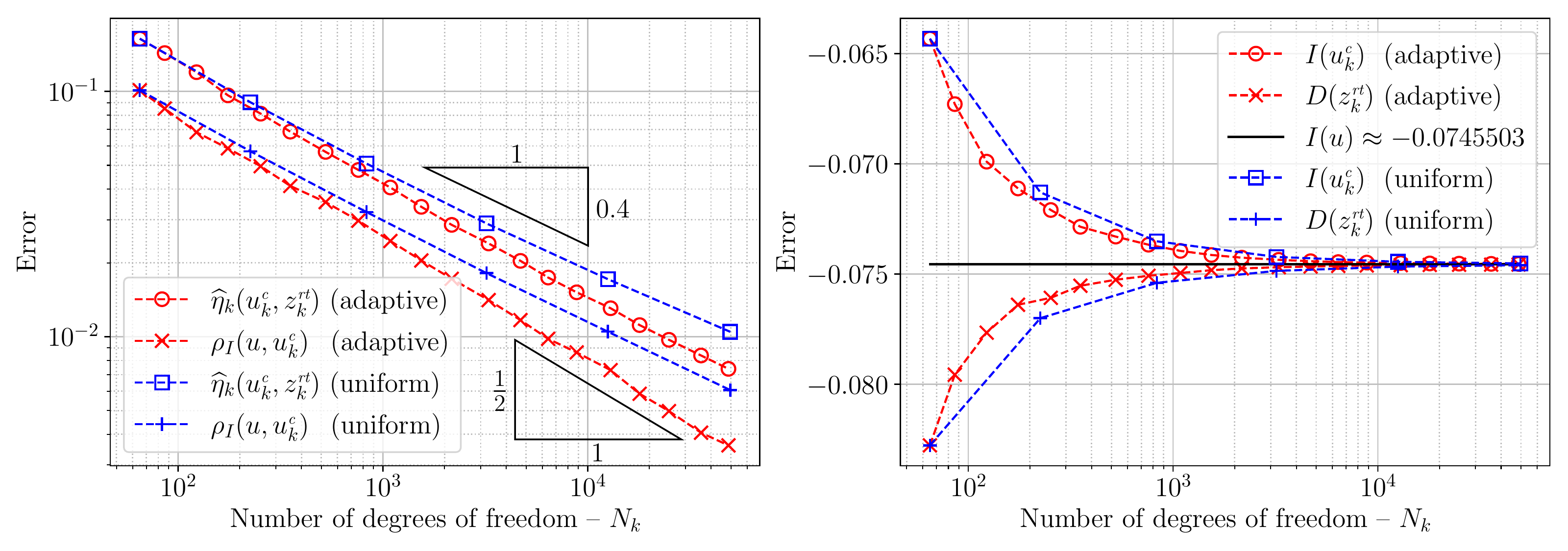}\vspace*{-0.25cm}
		\caption{The trapezoidal primal-dual a posteriori error estimators $\widehat{\eta}^2_k(u_k^{\textit{c}},z_k^{\textit{rt}})$, cf. \eqref{eq:pd-estimator}, and the error quantities  $\rho^2_I(u,u_k^{\textit{c}})$, cf. \eqref{eq:error-quantity}, (left) and the primal energy~$I(u_k^{\textit{c}})$,~cf. \eqref{eq:dirichlet_primal},  and  the dual energy $D (z_k^{\textit{rt}})$, cf. \eqref{eq:dirichlet_dual}, (right) for uniform and adaptive~mesh~refinement for $k=0,\dots,5$ and ${k=1,\dots,19}$, resp.\vspace*{-0.25cm}}
		\label{fig:OptimalDesign}
	\end{figure}
	
	\appendix
	\section{Appendix}\vspace*{-1mm}
	
	\subsection{Convex analysis}\label{subsec:convex_analysis}\vspace*{-1mm}
	
	\qquad For a (real) Banach space $X$ equipped with the norm $\|\cdot\|_X:X\to \mathbb{R}_{\ge 0}$,~we~denote its (continuous) dual space by $X^*$ equipped with the dual norm 
	${\|\cdot\|_{X^*}:X^*\to \mathbb{R}_{\ge 0}}$,~defined~by $\|x^*\|_{X^*}\vcentcolon=\sup_{\|x\|_X\leq 1}{\langle x^*,x\rangle_X}$ for every $x^*\in X^*$, where ${\langle \cdot,\cdot\rangle_X:X^*\times X\to \mathbb{R}}$, defined by $\langle x^*,x\rangle_X\vcentcolon=x^*(x)$ for every $x^*\in X^*$ and $x\in X$, denotes the duality \mbox{pairing}.
	A functional $F\hspace*{-0.17em}:\hspace*{-0.17em}X\hspace*{-0.17em}\to\hspace*{-0.17em} \mathbb{R}\cup\{+\infty\}$ \hspace*{-0.1mm}is \hspace*{-0.1mm}called \hspace*{-0.1mm}sub-differentiable \hspace*{-0.1mm}in \hspace*{-0.1mm}$x\hspace*{-0.17em}\in\hspace*{-0.17em} X$,~\hspace*{-0.1mm}if~\hspace*{-0.1mm}${F(x)\hspace*{-0.17em}<\hspace*{-0.17em}\infty}$~\hspace*{-0.1mm}and~\hspace*{-0.1mm}if~\hspace*{-0.1mm}\mbox{there}~\hspace*{-0.1mm}\mbox{exists}~\hspace*{-0.1mm}${x^*\hspace*{-0.17em}\in\hspace*{-0.17em} X^*}$, called sub-gradient, such that for~every~${y\in X}$,~it~holds\vspace*{-0.5mm}
	\begin{align}
		\langle x^*,y-x\rangle_X\leq F(y)-F(x).\label{eq:subgrad}
	\end{align} 
	The  sub-differential  $\partial F:X\to  2^{X^*}$ of a functional $F:X\to \mathbb{R}\cup\{+\infty\}$~for~every~${x\in X}$, is defined by $(\partial F)(x)\vcentcolon=\{x^*\in X^*\mid \eqref{eq:subgrad}\text{ holds for }x^*\}$ if $F(x)<\infty$ and $(\partial F)(x)\vcentcolon=\emptyset$~else.\vspace*{-8mm}
	
	\newpage
	\hspace*{-5.5mm}For a functional $F:X\to \mathbb{R}\cup\{\pm\infty\}$, we denote its (Fenchel) conjugate functional by $F^*:X^*\to \mathbb{R}\cup\{\pm\infty\}$, which for every $x^*\in X^*$ is defined by ${F^*(x^*)\vcentcolon=\sup_{x\in X}{\langle x^*,x\rangle_X-F(x)}}$. 
	If $F:X\to \mathbb{R}\cup\{+\infty\}$ is a proper, convex and lower semi-continuous~\mbox{functional}, then its conjugate $F^*:X^*\to\mathbb{R}\cup\{+\infty\}$ equally is  proper, convex and lower semi-continuous functional,  cf. \cite[p.  17]{ET99}. 
	Moreover, for every ${x^*\in X^*}$~and~${x\in X}$ such that 
	$ F^*(x^*)+F(x)$ is well-defined, i.e., the critical case $\infty-\infty$ does~not~occur, the Fenchel--Young inequality\vspace*{-0.5mm}
	\begin{align}
		\langle x^*,x\rangle_X\leq F^*(x^*)+F(x)\label{eq:fenchel_young_ineq}
	\end{align}
	applies. 
	In particular, 
	for every $x^*\in X^*$ and $x\in X$, it holds\vspace*{-0.5mm}
	\begin{align}
		x^*\in (\partial F)(x)\quad\Leftrightarrow \quad	\langle x^*,x\rangle_X= F^*(x^*)+F(x).\label{eq:fenchel_young_id}
	\end{align}
	
	\subsection{Estimates for node-averaging operator in terms of (shifted) $N$--functions}\label{subsec:auxiliary}\vspace*{-0.5mm}
	
	\qquad In this subsection, we want to prove several estimates for the node-averaging operator $\mathcal{J}_h^{\textit{av}}\!:\!\mathcal{L}^k(\mathcal{T}_h)\!\to\! \mathcal{S}^k_D(\mathcal{T}_h)$ in terms of (shifted) $N$--functions. A convex function $\varphi\!:\!\mathbb{R}_{\ge 0}\!\to\! \mathbb{R}_{\ge 0}$ is said to be an $N$--function if and only if $\varphi(0)=0$, $\varphi(t)>0$ for all $t>0$, $\lim_{t\to 0}{\varphi(t)/t}=0$, and $\lim_{t\to\infty}{\varphi(t)/t}=\infty$. As a consequence, there exists a right-derivative $\varphi':\mathbb{R}_{\ge 0}\to \mathbb{R}_{>0}$, which is non-decreasing and satisfies $\varphi'(0)\!=\!0$, $\varphi'(t)\!>\!0$ for all $t\!>\!0$,~and~${\lim_{t\to \infty}{\varphi'(t)}\!=\!\infty}$.
	In addition, an $N$--function $\varphi:\mathbb{R}_{\ge 0}\to \mathbb{R}_{\ge 0}$ satisfies the $\Delta_2$--condition  (in short, $\varphi\in \Delta_2$), if and only if there exists a constant $c>0$ such that $\varphi(2t)\leq c\,\varphi(t)$ for all $t\ge 0$. We denote the smallest such constant by $\Delta_2(\varphi)\!>\!0$. 
	We say that an $N$--function~${\varphi\!:\!\mathbb{R}_{\ge 0}\!\to\! \mathbb{R}_{\ge 0}}$~satisfies~the $\nabla_2$--condition (in short, $\varphi\in \nabla_2$), if its Fenchel conjugate $\varphi^*:\mathbb{R}_{\ge 0}\to \mathbb{R}_{\ge 0}$~is~an~$N$--function satisfying the $\Delta_2$--condition. If $\varphi:\mathbb{R}_{\ge 0}\to \mathbb{R}_{\ge 0}$ satisfies the $\Delta_2$-- and~the~\mbox{$\nabla_2$--condition} (in short, $\varphi\in \Delta_2\cap \nabla_2$), then for  $a\ge 0$, we define $\varphi_a':\mathbb{R}_{\ge 0}\to \mathbb{R}_{\ge 0}$~by $\smash{\varphi_a'(t)\vcentcolon=\varphi'(a+t)\frac{t}{a+t}}$~for all $t\ge 0$. Furthermore, for  $a\ge 0$,  we define $\varphi_a:\mathbb{R}_{\ge 0}\to \mathbb{R}_{\ge 0}$,  called~shifted~\mbox{$N$--functions},~by $\varphi_a(t)\vcentcolon=\smash{\int_0^t{\varphi_a'(s)\,\textup{d}s}}$  for all $t\ge 0$. It holds ${c_{\varphi}\vcentcolon=\sup_{a\ge 0}{\Delta_2(\varphi_a)}<\infty}$, cf.~\cite[Lemma~22]{DK08}.	In particular,  for every $\varepsilon\!>\!0$, there exists a constant 
	$c_\varepsilon\!>\!0$, not depending on $a\!\ge\! 0$,~such~that for every $t,s\ge 0$ and $a\ge 0$, it holds\vspace*{-1mm}
	\begin{align}
		 s\, t\leq  c_\varepsilon\,(\varphi_a)^*(s)+\varepsilon\,\varphi_a(t).\label{eq:eps-young}
	\end{align}
	
	\begin{proposition}\label{prop:n-function}
		Let $\varphi:\mathbb{R}_{\ge 0}\to \mathbb{R}_{\ge 0}$ be an $N$--function such that $\varphi\in\Delta_2\cap \nabla_2$. Then,  for every $v_h\in \mathcal{L}^k(\mathcal{T}_h)^l$, $k,l\in \mathbb{N}$, $m\in \{1,\dots,k+1\}$, $a\ge 0$ and $T\in \mathcal{T}_h$, we have that\vspace*{-0.5mm}
		\begin{align*}
			\fint_T{\varphi_a\big(h_T^m\vert \nabla_h^m(v_h-\mathcal{J}_h^{\textit{av}}v_h)\vert\big)\,\textup{d}x}\leq  c_{\textit{av}}\sum_{S\in \mathcal{S}_h(T)}{\fint_S{\varphi_a(\vert\jump{v_h}_S\vert)\,\textup{d}s}},
		\end{align*}\vspace*{-2.5mm}
	
		\hspace*{-5.5mm}where $c_{\textit{av}}>0$ only depends on $k,l\in \mathbb{N}$, $\smash{c_{\varphi}}>0$ and a constant $c_{\mathcal{T}}>0$ that depends on 
		geometry~of the triangulations $\mathcal{T}_h$, $h>0$, but not on their maximal, minimal~or~mean~\mbox{mesh-sizes}.\vspace*{-0.5mm}
	\end{proposition}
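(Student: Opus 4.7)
The plan is to reduce the estimate to a pointwise inequality on each element followed by a scaling/Jensen argument, exploiting that $\Delta_2(\varphi_a)$ is bounded uniformly in $a \ge 0$ by $c_\varphi$. First I would observe that, restricted to $T$, the discrepancy $v_h - \mathcal{J}_h^{\textit{av}}v_h$ lies in the finite-dimensional space $\mathcal{P}_k(T)^l$ and is determined by its values at the degrees of freedom $z \in \mathcal{D}_h^k \cap T$. By a standard scaling argument through a reference element and finite-dimensional norm equivalence, we obtain for every $x \in T$ the pointwise bound
\begin{align*}
	h_T^m\,\vert\nabla_h^m(v_h-\mathcal{J}_h^{\textit{av}}v_h)(x)\vert
	\leq c\sum_{z \in \mathcal{D}_h^k \cap T}\vert (v_h|_T)(z) - \langle v_h\rangle_z\vert,
\end{align*}
with a constant $c>0$ depending only on $k$, $l$ and the shape regularity of $\mathcal{T}_h$.

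Next I would convert the nodal differences on the right-hand side into side jumps. By definition of $\langle v_h\rangle_z$, each $(v_h|_T)(z) - \langle v_h\rangle_z$ equals an average of differences $(v_h|_T)(z) - (v_h|_{T'})(z)$ over $T' \in \mathcal{T}_h(z)$; passing from $T$ to $T'$ along a chain of neighboring elements in $\omega_T$ (whose length is bounded in terms of shape regularity), each such difference telescopes into a sum of pointwise jumps $\vert\jump{v_h}_S(z')\vert$ at nodes $z' \in S$ lying on sides $S \in \mathcal{S}_h(T)$. Since $\jump{v_h}_S|_S \in \mathcal{P}_k(S)^l$, finite-dimensional norm equivalence on a reference side further yields $\vert \jump{v_h}_S(z')\vert \leq c\,\fint_S \vert\jump{v_h}_S\vert\,\textup{d}s$, with $c$ depending on $k$ and shape regularity; the contributions from boundary sides meeting $\Gamma_D$ are handled analogously, using that $\langle v_h\rangle_z = 0$ there together with the boundary convention for $\jump{\cdot}_S$.

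Combining these two steps, I obtain for every $x \in T$ a pointwise bound of the form
\begin{align*}
	h_T^m\,\vert\nabla_h^m(v_h-\mathcal{J}_h^{\textit{av}}v_h)(x)\vert
	\leq c\,\sum_{S \in \mathcal{S}_h(T)} \fint_S \vert\jump{v_h}_S\vert\,\textup{d}s.
\end{align*}
Then I would apply $\varphi_a$ to both sides: by convexity of $\varphi_a$ together with Jensen's inequality applied to the side averages, and then by the $\Delta_2$-condition applied $\mathcal{O}(\log_2 N)$ times to absorb the sum of $N = \textup{card}(\mathcal{S}_h(T))$ (bounded by shape regularity) terms under the $\varphi_a$, I get
\begin{align*}
	\varphi_a\bigl(h_T^m\,\vert\nabla_h^m(v_h-\mathcal{J}_h^{\textit{av}}v_h)(x)\vert\bigr)
	\leq c'\sum_{S \in \mathcal{S}_h(T)} \fint_S \varphi_a(\vert\jump{v_h}_S\vert)\,\textup{d}s.
\end{align*}
Integrating over $T$ and dividing by $\vert T\vert$ yields the claim.

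The main obstacle, and the reason the estimate does not merely follow from \textit{(\hyperlink{(AV.1)}{AV.1})}, is that the constant $c_{\textit{av}}$ must be independent of the shift $a \ge 0$; this is what forces me to work with the pointwise bound and invoke the uniform $\Delta_2$-estimate $\sup_{a \ge 0}\Delta_2(\varphi_a) = c_\varphi < \infty$ from \cite[Lem.~22]{DK08}, rather than passing through an $L^p$-norm where the exponent would enter quantitatively. Shape regularity enters at three points above (bounding the patch size, equivalence of $h_T$, $h_{T'}$, $h_S$ in $\omega_T$, and the finite-dimensional norm equivalences on reference configurations), and the appearance of $c_{\mathcal{T}}>0$ in the statement reflects exactly these geometric inputs.
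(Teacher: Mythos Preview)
Your proposal is correct and follows essentially the same approach as the paper: obtain a pointwise (equivalently $L^\infty$) bound of the form $h_T^m\|\nabla_h^m(v_h-\mathcal{J}_h^{\textit{av}}v_h)\|_{L^\infty(T)}\le c\sum_{S\in\mathcal{S}_h(T)}\fint_S|\jump{v_h}_S|\,\textup{d}s$, then apply $\varphi_a$ pointwise and use Jensen plus the uniform $\Delta_2$--bound $c_\varphi$ to absorb the finite sum. The only difference is cosmetic: the paper cites \cite[Lem.~22.12]{EG21} and \cite[Lem.~12.1]{EG21} directly for the $L^\infty$ estimate and the side norm equivalence, whereas you sketch their proofs (scaling, nodal values, chain argument) explicitly.
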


	\begin{proof}\let\qed\relax
		Appealing to \cite[Lemma 22.12]{EG21}, there exists a constant $\overline c_{\textit{av}}>0$, such that~for~every $v_h\in \mathcal{L}^k(\mathcal{T}_h)^l$ and
     ${T\in \mathcal{T}_h}$, we have that\vspace*{-0.5mm}
		\begin{align}
			h_T^m\| \nabla_h^m(v_h-\mathcal{J}_h^{\textit{av}}v_h)\|_{L^\infty(T;\mathbb{R}^{l\times d^m})}\leq \overline c_{\textit{av}}\sum_{\smash{S\in \mathcal{S}_h(T)}}{\|\jump{v_h}_S\|_{L^\infty(S;\mathbb{R}^l)}}.\label{prop:n-function1}
		\end{align}\vspace*{-2.5mm}
	
		\hspace*{-5.5mm}Hence, since $\|\jump{v_h}_S\|_{L^\infty(S;\mathbb{R}^l)}\leq c_{\mathcal{T}}\fint_S{\vert \jump{v_h}_S\vert\,\textup{d}s}$ (cf. \cite[Lemma 12.1]{EG21}), also using~the~$\Delta_2$--condition and
		convexity of ${\varphi_a:\mathbb{R}_{\ge 0}\to \mathbb{R}_{\ge 0}}$,~${a\ge 0}$, in particular, Jensen's inequality, and that $\sup_{h>0}{\sup_{T\in \mathcal{T}_h}{\textup{card}(\mathcal{S}_h(T))}}\leq c_{\mathcal{T}}$ in \eqref{prop:n-function1},	for every $T\in \mathcal{T}_h$, we find that\vspace*{-0.5mm}
		\begin{align*}
		\fint_T{\varphi_a\big(h_T^m\vert \nabla_h^m(v_h-\mathcal{J}_h^{\textit{av}}v_h)\vert\big)\,\textup{d}x}
			&\leq 
		\Delta_2(\varphi_a)^{\lceil \overline c_{\textit{av}}c_{\mathcal{T}}^2\rceil }
			\varphi_a\bigg(\frac{1}{\textup{card}(\mathcal{S}_h(T))}\sum_{S\in\mathcal{S}_h(T)}{\fint_S{\vert \jump{v_h}_S\vert\,\textup{d}s}}\bigg)
		\\[-0.5mm]&\leq  c_{\varphi}^{\lceil \overline c_{\textit{av}}c_{\mathcal{T}}^2\rceil }\sum_{S\in\mathcal{S}_h(T)}{\fint_S{\varphi_a(\vert \jump{v_h}_S\vert)\,\textup{d}s}}.\tag*{$\square$}
		\end{align*}
		\end{proof}
	
		\begin{corollary}\label{cor:n-function}
			Let $\varphi:\mathbb{R}_{\ge 0}\to \mathbb{R}_{\ge 0}$ be an $N$--function  such that $\varphi\in\Delta_2\cap \nabla_2$. 
			Then,  for every $v_h\in  \mathcal{S}^{1,\textit{cr}}(\mathcal{T}_h)$, $m\in\{0,1,2\}$, $a\ge 0$ and $T\in \mathcal{T}_h$, we have that\vspace*{-0.5mm}
		\begin{align*}
			\fint_T{\varphi_a(h_T^m\vert \nabla_h^m(v_h-\mathcal{J}_h^{\textit{av}}v_h)\vert)\,\textup{d}x}&\leq  c_{\textit{av}}\sum_{S\in\mathcal{S}_h(T)}{\fint_S{\varphi_a(h_S\vert\jump{\nabla_h v_h}_S\vert)\,\textup{d}s}}\\[-1mm]&\leq 
			 \tilde{c}_{\textit{av}}\fint_{\omega_T}{\varphi_a(h_T\vert\nabla_h v_h\vert)\,\textup{d}x}.
		\end{align*}
		where $\tilde{c}_{\textit{av}}>0$ only depends on $c_{\varphi}>0$ and a constant $c_{\mathcal{T}}>0$ that depends on 
		geometry of the triangulations $\mathcal{T}_h$, $h>0$, but not on their maximal, minimal or mean mesh-sizes.\vspace*{-0.5mm}
		\end{corollary}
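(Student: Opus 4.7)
The plan is to obtain both inequalities by combining Proposition \ref{prop:n-function} with two structural facts about Crouzeix--Raviart functions: that the jumps of $v_h$ are controlled element-wise by the jumps of $\nabla_h v_h$ (via the identity recorded just before \eqref{eq:crouzeix_raviart_1}), and that $\nabla_h v_h$ is element-wise constant, so that $\jump{\nabla_h v_h}_S$ on each interior side is nothing but the difference of two constants.

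For the first inequality, I would first invoke Proposition \ref{prop:n-function} with $k=l=1$ applied to $v_h \in \mathcal{S}^{1,\textit{cr}}(\mathcal{T}_h) \subseteq \mathcal{L}^1(\mathcal{T}_h)$, which yields
\begin{align*}
\fint_T \varphi_a\bigl(h_T^m\vert \nabla_h^m(v_h-\mathcal{J}_h^{\textit{av}}v_h)\vert\bigr)\,\textup{d}x \le c_{\textit{av}}\sum_{S\in\mathcal{S}_h(T)}\fint_S \varphi_a(\vert\jump{v_h}_S\vert)\,\textup{d}s.
\end{align*}
Next, I would use that for $v_h\in \mathcal{S}^{1,\textit{cr}}(\mathcal{T}_h)$ one has $\jump{v_h}_S=\jump{\nabla_h v_h}_S(\textup{id}_{\mathbb{R}^d}-x_S)$ on $S$, so $\vert\jump{v_h}_S\vert \le h_S\vert\jump{\nabla_h v_h}_S\vert$ pointwise on $S$; monotonicity of $\varphi_a$ then converts each integrand into $\varphi_a(h_S\vert\jump{\nabla_h v_h}_S\vert)$, establishing the first inequality (still with constant $c_{\textit{av}}$).

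For the second inequality, the key observation is that for each interior side $S=\partial T_+ \cap \partial T_-$, the jump $\jump{\nabla_h v_h}_S$ is a constant vector on $S$ (so the slice integral $\fint_S$ is trivial) and satisfies $\vert\jump{\nabla_h v_h}_S\vert \le \vert\nabla_h v_h|_{T_+}\vert + \vert\nabla_h v_h|_{T_-}\vert$. Shape regularity gives $h_S \sim h_{T_\pm} \sim h_T$ uniformly for $T' \in \omega_T$ (and for boundary sides one of the two contributions vanishes). Using convexity of $\varphi_a$ in combination with the $\Delta_2$--property (with constant $c_\varphi$) yields
\begin{align*}
\varphi_a(h_S\vert\jump{\nabla_h v_h}_S\vert) \le c_\varphi\bigl[\varphi_a(h_{T_+}\vert\nabla_h v_h|_{T_+}\vert) + \varphi_a(h_{T_-}\vert\nabla_h v_h|_{T_-}\vert)\bigr].
\end{align*}
Because $\nabla_h v_h$ is piecewise constant, each term on the right equals its mean over the respective element, i.e.\ $\varphi_a(h_{T_\pm}\vert\nabla_h v_h|_{T_\pm}\vert) = \fint_{T_\pm}\varphi_a(h_{T_\pm}\vert\nabla_h v_h\vert)\,\textup{d}x$. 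Summing the resulting bound over $S\in\mathcal{S}_h(T)$ and noting that each element in the patch $\omega_T$ contributes only boundedly often (by shape regularity), I would absorb the finitely many element means into $\fint_{\omega_T}$, picking up a combinatorial constant depending only on $c_{\mathcal T}$.

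The main step where care is needed is the last one: one must verify that $h_S \sim h_T \sim h_{T'}$ uniformly for all $T' \subseteq \omega_T$ (available from shape regularity through $c_{\mathcal T}$), so that $h_{T_\pm}\vert \nabla_h v_h\vert \le C\, h_T \vert \nabla_h v_h\vert$ inside every $T'\subseteq \omega_T$, after which a further application of $\Delta_2$ (bounded uniformly in $a\ge 0$ by $c_\varphi$, cf.\ the sentence preceding \eqref{eq:eps-young}) closes the estimate. Nothing beyond these shape-regularity and $\Delta_2$ bookkeeping steps is needed; the rest is purely algebraic.
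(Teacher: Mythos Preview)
Your proposal is correct and follows the same route as the paper's (one-line) proof: apply Proposition~\ref{prop:n-function}, convert $\jump{v_h}_S$ to $h_S\jump{\nabla_h v_h}_S$ via the Crouzeix--Raviart identity $\jump{v_h}_S=\jump{\nabla_h v_h}_S\cdot(\mathrm{id}_{\mathbb{R}^d}-x_S)$, and then pass from sides to elements. For the second inequality the paper just cites ``the discrete trace inequality'' \cite[Lemma~12.8]{EG21}; your argument, which exploits directly that $\nabla_h v_h$ is element-wise constant (so the side average is trivial and the jump is bounded by the sum of the two neighbouring constants), is exactly what that trace inequality collapses to for $\mathcal{L}^0(\mathcal{T}_h)^d$-functions, and the remaining $\Delta_2$/shape-regularity bookkeeping you describe is precisely what is needed to close the estimate.
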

		
		\begin{proof}
			Follows from Proposition \ref{prop:n-function}, if we exploit that $\jump{v_h}_S=\jump{\nabla_h v_h}_S\cdot(\textup{id}_{\mathbb{R}^d}-x_S)$~on~$S$ for all $S\in \mathcal{S}_h$ and $v_h\in \mathcal{S}^{1,\textit{cr}}(\mathcal{T}_h)$ and the discrete trace inequality~\cite[Lemma~12.8]{EG21}.\vspace*{-0.5mm}
		\end{proof}

	{\setlength{\bibsep}{0pt plus 0.0ex}\small

	
	\providecommand{\bysame}{\leavevmode\hbox to3em{\hrulefill}\thinspace}
	\providecommand{\noopsort}[1]{}
	\providecommand{\mr}[1]{\href{http://www.ams.org/mathscinet-getitem?mr=#1}{MR~#1}}
	\providecommand{\zbl}[1]{\href{http://www.zentralblatt-math.org/zmath/en/search/?q=an:#1}{Zbl~#1}}
	\providecommand{\jfm}[1]{\href{http://www.emis.de/cgi-bin/JFM-item?#1}{JFM~#1}}
	\providecommand{\arxiv}[1]{\href{http://www.arxiv.org/abs/#1}{arXiv~#1}}
	\providecommand{\doi}[1]{\url{http://dx.doi.org/#1}}
	\providecommand{\MR}{\relax\ifhmode\unskip\space\fi MR }
	\providecommand{\MRhref}[2]{%
		\href{http://www.ams.org/mathscinet-getitem?mr=#1}{#2}
	}
	\providecommand{\href}[2]{#2}

}
	
\end{document}